\documentclass[a4paper, 11pt]{amsart}
\linespread{1}
\usepackage[scale=0.72,centering]{geometry}
\usepackage{amsmath,amssymb,amsthm,graphicx,bbm,pdfpages}
\usepackage{color,CJK,mathrsfs}
\usepackage[font={footnotesize}]{caption}
\numberwithin{equation}{section}
\newtheorem{theorem}{\textbf{Theorem}}[section]

\newtheorem{lemma}[theorem]{\textbf{Lemma}}

\newtheorem{proposition}[theorem]{\textbf{Proposition}}
\newtheorem{remark}[theorem]{\textbf{Remark}}
\begin{document}
\author{RAN WEI}
\thanks{Department of Mathematics, National University of Singapore, 10 Lower Kent Ridge Road, 119076 Singapore. Email address: weiran@u.nus.edu}
\title{On the Long-range Directed Polymer Model}
\date{24 August, 2016}
\begin{abstract}
We study the long-range directed polymer model on $\mathbbm{Z}$ in a random environment, where the underlying random walk lies in the domain of attraction of an $\alpha$-stable process for some $\alpha\in(0,2]$. Similar to the more classic nearest-neighbor directed polymer model, as the inverse temperature $\beta$ increases, the model undergoes a transition from a weak disorder regime to a strong disorder regime. We extend most of the important results known for the nearest-neighbor directed polymer model on $\mathbbm{Z}^d$ to the long-range model on $\mathbbm{Z}$. More precisely, we show that in the entire weak disorder regime, the polymer satisfies an analogue of invariance principle, while in the so-called very strong disorder regime, the polymer end point distribution contains macroscopic atoms and under some mild conditions, the polymer has a super-$\alpha$-stable motion. Furthermore, for $\alpha \in (1,2]$, we show that the model is in the very strong disorder regime whenever $\beta>0$, and we give explicit bounds on the free energy.\\ \\
AMS 2010 subject classification: 60K35, 82D60, 82B44\\ \\
\textbf{Keywords:} Long-range Directed Polymer, Free Energy, Strong Disorder, Weak Disorder, Invariance Principle, Coarse Graining, Localization, Super-$\alpha$-stable Motion.
\end{abstract}
\maketitle
\section{Introduction}
The directed polymer model was first introduced by Huse and Henley \cite{huse1985pinning} in the study of the Ising model. A
later motivation for studying the model and generalize it in arbitrary dimension was the observation that when a polymer chain stretches in some media with impurity or charges, the behavior of the polymer chain will be influenced by the interaction between the polymer chain and the environment. The polymer chain is modelled by a directed random walk, and a family of random variables in space represents the random environment. The first mathematical study of the directed polymer model was due to Imbrie and Spencer \cite{imbrie1988diffusion}, which was then followed by many other authors e.g.\cite{berger2015high, bolthausen1989note, carmona2002partition, comets2003directed, comets2006directed, comets2006majorizing, lacoin2010new, vargas2007strong}. For an early review, see \cite{comets2004probabilistic}, and for a comprehensive introduction to directed polymer in random environment and other related polymer models, see \cite{den2009random}.

So far, most of the results achieved in the study of directed polymer are based on the assumption that the polymer chain performs a simple symmetric random walk. In that case, we call the model the nearest-neighbor directed polymer. It is natural to consider replacing the simple random walk by more complicated random walks to reflect a variety of physical phenomena. In \cite{MR2381596}, Comets considered long-range random walks, whose increment distribution is in the domain of attraction of some $\alpha$-stable law. The reason that we consider the long-range model is that it models superdiffusive motions, unlike the nearest-neighbor model, which only models the diffusive motion. Another reason that motivates the study of long-range directed polymer is that in recent years, long-range random walks have played an increasingly important role in related fields, such as mathematical finance and statistics. It is likely that the directed polymer model may be applied to the study of other subjects.

In \cite{MR2381596}, the author extended some early results for the nearest-neighbor directed polymer to the long-range case. Since then, much progress has been made for the nearest-neighbor model. The goal of this paper is to investigate whether these newer results can also be extended to the long-range model and what are the important differences between the two cases. We will see later that there are indeed some differences between the long-range model and the nearest-neighbor model due to the heavy-tailed increments, which will result in some technical difficulties.
\begin{remark}\label{T101}
In \cite{miura2008strong}, Miura, Tawara, and Tsuchida also studied a long-range model. They considered a continuous case in which the polymer chains are modelled by symmetric L\'evy processes and the random environment is given by a time-space Poisson point process. The continuous model is worth investigating so we mention this reference here for literature completeness but we will focus on discrete model in this paper.
\end{remark}
\subsection{Long-range directed polymer model}
Let $S=(S_{n})_{n\geq0}$ be a heavy-tailed random walk on $\mathbbm{Z}$ with i.i.d. increments, starting at $0$. The law of $S$ is denoted by $\mathbf{P}$ and the corresponding expectation is denoted by $\mathbf{E}$. We assume that the increment distribution of $S$ is in the domain of attraction of some stable law, which is equivalent to
\begin{equation}\label{101}
\mathbf{P}(|S_{1}|\geq n)=n^{-\alpha}L(n),~\forall n\geq1,~\mbox{for some}~\alpha\in(0,2),
\end{equation}
or
\begin{equation}\label{102}
\mathbf{E}\left[(S_{1})^{2}\mathbbm{1}_{\{|S_{1}|\leq n\}}\right]=L(n),~\forall n\geq1,~\mbox{for}~\alpha=2,
\end{equation}
where $L(\cdot)$ is some positive function slowly varying at infinity (see \cite[Theorem 3.2]{gut2012probability} and \cite[Chapter 1]{bingham1989regular}). Under the condition \eqref{101} or \eqref{102}, the random walk $S$ converges to some $\alpha$-stable law after centering and scaling, that is, we can find a sequence of centering factors $\{b_{n}\}_{n\in\mathbbm{N}}$ and a sequence of scaling factors $\{a_{n}\}_{n\in\mathbbm{N}}$, such that
\begin{equation}\label{103}
\frac{S_{n}-b_{n}}{a_{n}}\Rightarrow X_{\alpha}~\mbox{weakly as}~n\to\infty,
\end{equation}
where $X_{\alpha}$ is some stable law with stable exponent $\alpha\in(0,2]$. The scaling factor $a_{n}$ is determined by the stable exponent $\alpha$ and the slowly varying function $L(x)$, and can be expressed as $n^{\frac{1}{\alpha}}l(n)$ for some slowly varying function $l(n)$. When $\alpha\in(0,1)$, the centering factors $b_{n}$ can be chosen as $0$. When $\alpha\in(1,2]$, $\mathbf{E}[S_{1}]$ exists and $b_{n}$ can be chosen as $n\mathbf{E}[S_{1}]$. For simplicity, we just assume $\mathbf{E}[S_{1}]=0$ and we will see that all proof can be adapted
in a straight-forward manner for the non-zero mean case. When $\alpha=1$, $b_{n}$ can be computed, but we set $b_{n}=0$ for technical reason. For details, see \cite[Chapter 7]{newell1955limit}. Therefore, throughout this paper, we assume $b_{n}=0$.

We assume from now on that the random environment is described by a family of i.i.d. random variables $\omega=(\omega_{i,x})_{(i,x)\in\mathbbm{N}\times\mathbbm{Z}}$, which is independent of the random walk $S$. The law of $\omega$ is denoted by $\mathbbm{P}$ and the corresponding expectation is denoted by $\mathbbm{E}$. We also assume that the random environment has a finite logarithmic moment generating function, at least for small enough $|\beta|$,
\begin{equation}\label{104}
\lambda(\beta):=\log\mathbbm{E}[\exp(\beta\omega_{i,x})]<\infty,~~\forall\beta\in[-c,c],~\mbox{for some}~c>0.
\end{equation}
Without loss of generality, we can further assume that $\mathbbm{E}[\omega_{i,x}]=0$ and $\mathbbm{E}[(\omega_{i,x})^{2}]=1$.

Given the random environment $\omega$, for any $N\geq0$, and $\beta>0$, we can define the polymer measure through Gibbs transformation of the law $\mathbf{P}$ of the random walk up to time $N$ by
\begin{equation}\label{105}
\frac{\mbox{d}\mathbf{P}_{N,\beta}^{\omega}}{\mbox{d}\mathbf{P}}(S):=\frac{1}{Z_{N,\beta}^{\omega}}\exp\left(\sum\limits_{n=1}^{N}\beta\omega_{n,S_{n}}\right),
\end{equation}
where
\begin{equation}\label{106}
Z_{N,\beta}^{\omega}=\mathbf{E}\left[\exp\left(\sum\limits_{n=1}^{N}\beta\omega_{n,S_{n}}\right)\right]
\end{equation}
is the partition function which makes $\mathbf{P}_{N,\beta}^{\omega}$ a probability measure and $\beta$ is the inverse temperature. We also denote the Hamiltonian of the system by
\begin{equation}\label{107}
H_{N}^{\omega}(S):=-\sum\limits_{n=1}^{N}\omega_{n,S_{n}},
\end{equation}
which represents the energy of the path of the random walk. It can be seen from \eqref{105} that under the polymer measure $\mathbf{P}_{N,\beta}^{\omega}$, the random walk paths with low energy carry more weights.
\begin{remark}\label{T102}
Unlike many other papers concerning the nearest-neighbor model, in this paper, we only consider the model on $\mathbbm{Z}^{1+1}$ instead of $\mathbbm{Z}^{d+1}$. The reason is that when we later consider the significant classification of the strong disorder regime and the weak disorder regime, whether the random walk is recurrent or transient plays a key role, see \cite{MR2381596, comets2003directed}. It is known that for heavy-tailed random walks on $\mathbbm{Z}$ satisfying \eqref{101} or \eqref{102}, the random walk is recurrent for $\alpha\in(1,2]$ and transient for $\alpha\in(0,1)$, and for the critical case $\alpha=1$, whether the random walk is recurrent or transient depends on the slowly varying function $L(x)$. In dimension 2, the random walk is transient for $\alpha\in(0,2)$. And in higher dimension, the random walk is transient for all $\alpha\in(0,2]$. As we can see, the phase transition mostly occurs in dimension 1. Therefore, most of the interesting behaviors are contained in one dimensional model as we vary $\alpha\in(0,2]$. We also mention that our Proposition \ref{T113} can adapts the case $d=2$, $\alpha=2$, which might be of interest.
\end{remark}
Denote the $\sigma$-field generated by the random environment up to time $N$ by $\mathcal{G}_{N}=\sigma((\omega_{n,x})_{0\leq n\leq N, x\in\mathbbm{Z}})$. It is easy to see that the normalized partition function
\begin{equation}\label{108}
\hat{Z}_{N,\beta}^{\omega}:=\frac{Z_{N,\beta}^{\omega}}{\exp(N\lambda(\beta))}
\end{equation}
is a $\mathbbm{P}$-martingale with respect to the filtration $(\mathcal{G}_{N})_{N\geq0}$. Since $\hat{Z}_{N,\beta}^{\omega}$ is nonnegative, it converges to some random variable $\hat{Z}_{\infty,\beta}^{\omega}$ almost surely by the martingale convergence theorem. It can be seen that the event $\{\hat{Z}_{\infty,\beta}^{\omega}=0\}$ is in the tail $\sigma$-field $\bigcap\limits_{N=0}^{\infty}\sigma((\omega_{n,x})_{n\geq N, x\in\mathbbm{Z}})$. By Kolmogorov's 0-1 law, either $\mathbbm{P}(\hat{Z}_{\infty,\beta}^{\omega}=0)=0$ or $\mathbbm{P}(\hat{Z}_{\infty,\beta}^{\omega}=0)=1$. We call the first case \textit{the weak disorder regime} and the second one \textit{the strong disorder regime}. This simple but significant observation was first made by Bolthausen for a binary random environment (see \cite{bolthausen1989note}).

It is believed that in the weak disorder regime, under the polymer measure $\mathbf{P}_{N,\beta}^{\omega}$, the random walk's behavior is comparable to that under $\mathbf{P}$, i.e., the random walk fluctuates on the scale $N^{\frac{1}{\alpha}}$ (up to some extra slowly varying function) as $N\to\infty$. While in the strong disorder regime, under the polymer measure $\mathbf{P}_{N,\beta}^{\omega}$, there will be some narrow corridors at a distance $\gg N^{\frac{1}{\alpha}}$ from the origin, in which the random walk falls with high probability. In particular, the random walk's end-point distribution contains macroscopic atoms. We call this expected phenomenon in the weak disorder regime \textit{delocalization}, and the one in the strong disorder regime \textit{localization}.

One important result that connects strong disorder with localization is \cite[Theorem 2.1]{comets2003directed}, for the nearest-neighbor model which is then extended to the long-range case model in \cite{MR2381596}. We cite that result here:
\begin{theorem}[Comets~\cite{MR2381596}]\label{T103}
Denote
\begin{equation}\label{109}
I_{N}:=(\mathbf{P}_{N-1,\beta}^{\omega})^{\bigotimes2}(S_{N}^{1}=S_{N}^{2}),
\end{equation}
where $S^{1}$, $S^{2}$ are two independent copies of the random walk $S$ satisfying \eqref{101} or \eqref{102} and $(\mathbf{P}_{N-1,\beta}^{\omega})^{\bigotimes2}$ can be viewed as the distribution of the couple $(S^{1},S^{2})$ with the same environment $\omega$. Let $\beta>0$. Then,
\begin{equation}\label{110}
\mathbbm{P}(\hat{Z}_{\infty,\beta}^{\omega}=0)=\mathbbm{P}\left(\sum\limits_{N=1}^{\infty}I_{N}=\infty\right).
\end{equation}
Moreover, if $\mathbbm{P}(\hat{Z}_{\infty,\beta}^{\omega}=0)=1$, then $\mathbbm{P}$-a.s., there exist $c_{1}$, $c_{2}\in(0,\infty)$, such that
\begin{equation}\label{111}
-c_{1}\log\hat{Z}_{N,\beta}^{\omega}\leq\sum\limits_{n=1}^{N}I_{n}\leq-c_{2}\log\hat{Z}_{N,\beta}^{\omega}~~\mbox{for}~N~\mbox{large enough.}
\end{equation}
\end{theorem}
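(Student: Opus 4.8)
The plan is to analyze the multiplicative increments of the positive $\mathbbm{P}$-martingale $\hat{Z}_{N,\beta}^{\omega}$ and to reduce both assertions to a dichotomy for nonnegative martingales. Writing $\xi_{N}:=\hat{Z}_{N,\beta}^{\omega}/\hat{Z}_{N-1,\beta}^{\omega}$ and $p_{x}^{(N)}:=\mathbf{P}_{N-1,\beta}^{\omega}(S_{N}=x)$, which is $\mathcal{G}_{N-1}$-measurable because the increment $S_{N}-S_{N-1}$ is not reweighted by the Hamiltonian up to time $N-1$, a direct computation from \eqref{106} and \eqref{108} gives
\begin{equation*}
\xi_{N}=e^{-\lambda(\beta)}\sum_{x\in\mathbbm{Z}}p_{x}^{(N)}\,e^{\beta\omega_{N,x}}.
\end{equation*}
Since $(\omega_{N,x})_{x}$ is independent of $\mathcal{G}_{N-1}$ with $\mathbbm{E}[e^{\beta\omega_{N,x}}]=e^{\lambda(\beta)}$, one checks $\mathbbm{E}[\xi_{N}\mid\mathcal{G}_{N-1}]=1$ (reproving the martingale property) and, splitting the double sum into diagonal and off-diagonal terms, $\mathbbm{E}[\xi_{N}^{2}\mid\mathcal{G}_{N-1}]-1=(e^{\kappa(\beta)}-1)\,I_{N}$, where $\kappa(\beta):=\lambda(2\beta)-2\lambda(\beta)>0$ and $I_{N}=\sum_{x}(p_{x}^{(N)})^{2}$ is exactly the collision probability \eqref{109}. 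Thus the conditional variance of each increment is proportional to $I_{N}$.

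Next I would pass to square roots, which is what makes the estimates robust (it only ever calls on $\lambda(\beta/2),\lambda(\beta)$, never on negative arguments). Set $b_{N}:=1-\mathbbm{E}[\sqrt{\xi_{N}}\mid\mathcal{G}_{N-1}]$, a conditional Hellinger-type affinity. From $\mathbbm{E}[\xi_{N}\mid\mathcal{G}_{N-1}]=1$ one gets the identity $b_{N}=\tfrac12\mathbbm{E}[(1-\sqrt{\xi_{N}})^{2}\mid\mathcal{G}_{N-1}]\in[0,1]$. The heart of the argument is the comparison $c_{2}'\,I_{N}\le b_{N}\le c_{1}'\,I_{N}$ almost surely, for deterministic $0<c_{2}'\le c_{1}'<\infty$ depending only on $\beta$ and the law of $\omega$. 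The upper bound is immediate from $(1-\sqrt{\xi})^{2}=(1-\xi)^{2}/(1+\sqrt{\xi})^{2}\le(1-\xi)^{2}$, giving $b_{N}\le\tfrac12\,\mathrm{Var}(\xi_{N}\mid\mathcal{G}_{N-1})=\tfrac12(e^{\kappa(\beta)}-1)I_{N}$. The reverse inequality is the delicate point: since $\xi_{N}$ is unbounded, one must rule out that the mass of $\mathrm{Var}(\xi_{N}\mid\mathcal{G}_{N-1})$ escapes into $\{\xi_{N}\gg1\}$. Here the finite exponential moment \eqref{104} of the environment enters, allowing one to truncate at a level $A$, bound $b_{N}\ge(1+\sqrt{A})^{-2}\,\mathbbm{E}[(1-\xi_{N})^{2}\mathbbm{1}_{\{\xi_{N}\le A\}}\mid\mathcal{G}_{N-1}]$, and absorb the tail uniformly in the configuration $(p_{x}^{(N)})_{x}$.

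With the comparison in hand, both statements follow from the nonnegative martingale $M_{N}:=\sqrt{\hat{Z}_{N,\beta}^{\omega}}\prod_{k=1}^{N}(1-b_{k})^{-1}$, which satisfies $\mathbbm{E}[M_{N}\mid\mathcal{G}_{N-1}]=M_{N-1}$ and hence converges $\mathbbm{P}$-a.s. to a finite limit $M_{\infty}$. Rearranging, $\sqrt{\hat{Z}_{N,\beta}^{\omega}}=M_{N}\prod_{k\le N}(1-b_{k})$. If $\sum_{N}b_{N}=\infty$ then $\prod_{k}(1-b_{k})=0$, forcing $\hat{Z}_{\infty,\beta}^{\omega}=0$; the converse $\sum_{N}b_{N}<\infty\Rightarrow\hat{Z}_{\infty,\beta}^{\omega}>0$ I would obtain from the general dichotomy for nonnegative martingales, via a stopping-time argument controlling the degeneracy of $M_{N}$ on the random event $\{\sum_{N}b_{N}<\infty\}$. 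Since $\sum_{N}b_{N}<\infty\iff\sum_{N}I_{N}<\infty$ by the comparison, this proves \eqref{110}. For \eqref{111}, take logarithms: $-\tfrac12\log\hat{Z}_{N,\beta}^{\omega}=-\log M_{N}+\sum_{k\le N}\bigl(-\log(1-b_{k})\bigr)$; on the strong disorder event $-\log(1-b_{k})$ is comparable to $b_{k}$, hence to $I_{k}$ (using that $b_{k}$ is bounded away from $1$, another consequence of the moment bounds), while $\sum_{k\le N}I_{k}\to\infty$.

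The principal obstacle is thus twofold, and both parts rest on uniform tail control of $\xi_{N}$: establishing the lower comparison bound $b_{N}\ge c_{2}'I_{N}$, and showing $\log M_{N}=o\bigl(\sum_{k\le N}I_{k}\bigr)$ (equivalently $M_{\infty}\in(0,\infty)$), so that the dominant term $\sum_{k\le N}\bigl(-\log(1-b_{k})\bigr)$ determines the growth and \eqref{111} follows with $c_{1},c_{2}$ read off from $c_{1}',c_{2}'$. Everything model-specific is concentrated in the collision identity $I_{N}=\sum_{x}(p_{x}^{(N)})^{2}$ and the variance computation; the long-range nature of $S$ enters only through the value of $I_{N}$, so the martingale machinery is identical to the nearest-neighbor case.
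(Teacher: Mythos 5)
First, a framing point: the paper does not prove Theorem \ref{T103} at all — it is quoted from Comets \cite{MR2381596}, whose argument (following Comets–Shiga–Yoshida) runs through the \emph{additive} Doob decomposition $\log\hat{Z}_{N,\beta}^{\omega}=M_{N}-A_{N}$ with compensator $A_{N}=\sum_{n\leq N}\mathbbm{E}[U_{n}-\log(1+U_{n})\mid\mathcal{G}_{n-1}]\asymp\sum_{n\leq N}I_{n}$, so your multiplicative, Hellinger/Kakutani-style route is genuinely different in form. Your verifiable computations are correct: $\xi_{N}=e^{-\lambda(\beta)}\sum_{x}p_{x}^{(N)}e^{\beta\omega_{N,x}}$ with $p_{x}^{(N)}$ indeed $\mathcal{G}_{N-1}$-measurable, the conditional variance identity $(e^{\lambda(2\beta)-2\lambda(\beta)}-1)I_{N}$, the identity $b_{N}=\tfrac{1}{2}\mathbbm{E}[(1-\sqrt{\xi_{N}})^{2}\mid\mathcal{G}_{N-1}]$, and the martingale property of $M_{N}$. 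Moreover, the step you flag as "the delicate point" is actually immediate from the paper's own Lemma \ref{T303}: for $u>-1$ one has $(1-\sqrt{1+u})^{2}=u^{2}/(1+\sqrt{1+u})^{2}\geq u^{2}/(2(2+u))$, so $b_{N}\geq\tfrac{1}{4}\mathbbm{E}[U_{N}^{2}/(2+U_{N})\mid\mathcal{G}_{N-1}]\geq(4c)^{-1}I_{N}$ — no truncation needed. (Note also an internal inconsistency: you advertise that the square-root trick only calls on $\lambda(\beta/2),\lambda(\beta)$, yet your upper comparison uses $\lambda(2\beta)$ and your truncation would effectively need $\lambda(3\beta)$ — the same third-moment hypothesis $\mathbbm{E}[\xi^{3}+\log^{2}\xi]<\infty$ as the quoted lemma.)

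The genuine gap is in the back half, and it is exactly where you lean on a "general dichotomy for nonnegative martingales": no such dichotomy exists. $M_{N}=\sqrt{\hat{Z}_{N,\beta}^{\omega}}\prod_{k\leq N}(1-b_{k})^{-1}$ is a nonnegative martingale and converges a.s., but nothing rules out $M_{\infty}=0$, and in strong disorder there is no reason to expect $M_{\infty}\in(0,\infty)$. Convergence of $M_{N}$ buys you only one inclusion of \eqref{110} ($\sum b_{N}=\infty\Rightarrow\hat{Z}_{\infty,\beta}^{\omega}=0$) and only the right-hand inequality of \eqref{111} (since $-\log M_{N}\geq-\log\sup_{N}M_{N}>-\infty$ a.s., one gets $\sum_{n\leq N}I_{n}\leq-c_{2}\log\hat{Z}_{N,\beta}^{\omega}$ eventually). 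For the converse implication in \eqref{110} and the left inequality in \eqref{111} you need a \emph{lower} bound $-\log M_{N}=o\bigl(\sum_{k\leq N}I_{k}\bigr)$, and neither $M_{\infty}\in(0,\infty)$ nor your stopping-time sketch delivers it: stopping at $\tau_{K}=\inf\{N:\sum_{k\leq N+1}b_{k}>K\}$ makes $M_{\cdot\wedge\tau_{K}}$ bounded in $L^{2}$ by $e^{CK}$, but the Cauchy–Schwarz step needed to localize the limit's mass on $\{\tau_{K}=\infty\}$ fails because $e^{CK}$ diverges in $K$. The actual proof closes this by working additively with $\log\hat{Z}_{N,\beta}^{\omega}$ and invoking two event-wise martingale theorems: (i) a martingale converges a.s. on $\{\sum_{n}\mathbbm{E}[(\Delta M_{n})^{2}\mid\mathcal{G}_{n-1}]<\infty\}$, which yields $\hat{Z}_{\infty,\beta}^{\omega}>0$ a.s. on $\{\sum_{n}I_{n}<\infty\}$; and (ii) the strong law $M_{N}=o(A_{N})$ on $\{A_{\infty}=\infty\}$, which yields $-\log\hat{Z}_{N,\beta}^{\omega}\asymp A_{N}\asymp\sum_{n\leq N}I_{n}$. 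Both require the estimate that the conditional second moment of $\log\xi_{N}$ (centered) is bounded by $C\,I_{N}$ — a bound that uses $\mathbbm{E}[\log^{2}\xi]<\infty$ and the structure $\xi_{N}\geq e^{-\lambda(\beta)}p_{x^{*}}^{(N)}e^{\beta\omega_{N,x^{*}}}$ with $p_{x^{*}}^{(N)}\geq I_{N}$ — and this estimate appears nowhere in your proposal. Until you supply it (or an equivalent control of how small $M_{N}$ can get relative to $\sum_{k\leq N}I_{k}$), the direction $\sum I_{N}<\infty\Rightarrow$ weak disorder and the lower bound in \eqref{111} remain unproven.
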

The quantity $I_{N}$ can be considered as the end-point overlap of two i.i.d.\ copies of the polymer at time $N$. For technical reasons, we consider the probability of $\{S_{N}^{1}=S_{N}^{2}\}$ under the product measure $(\mathbf{P}_{N-1,\beta}^{\omega})^{\bigotimes2}$, where the increment $S_{N}-S_{N-1}$ is distributed as the original random walk. This theorem heuristically indicates that trajectories should intersect infinitely often in strong disorder.

The free energy of the system is defined by
\begin{equation}\label{112}
F(\beta):=\lim\limits_{N\to\infty}\frac{1}{N}\log Z_{N,\beta}^{\omega}.
\end{equation}
This limit is known to exist and to be deterministic $\mathbbm{P}$-a.s. One can refer to \cite{MR2381596, comets2003directed} to see that
\begin{equation}\label{113}
F(\beta)=\lim\limits_{N\to\infty}\frac{1}{N}\mathbbm{E}[\log Z_{N,\beta}^{\omega}].
\end{equation}
We set
\begin{equation}\label{114}
p(\beta):=\lim\limits_{N\to\infty}\frac{1}{N}\log\hat{Z}_{N,\beta}^{\omega}=F(\beta)-\lambda(\beta)\leq0,
\end{equation}
where the inequality is due to Jensen's inequality. It can be seen that if $p(\beta)<0$, then $\hat{Z}_{N,\beta}^{\omega}$ has exponential decay, which implies that strong disorder holds, but not the converse. Thus, the case $p(\beta)<0$ is called \textit{the very strong disorder regime}.

Since we have the dichotomy between weak disorder and strong disorder, we can draw the phase diagram of the system, which was first established for the nearest-neighbor directed polymer in \cite{comets2006directed} and then extended to the long-range case in \cite{MR2381596}. We summarize their results as follows.
\begin{theorem}[Comets-Yoshida~\cite{comets2006directed}, Comets~\cite{MR2381596}]\label{T104}
For any $\alpha\in(0,2]$, there exist $0\leq\beta_{c}^{1}:=\beta_{c}^{1}(\alpha)\leq\beta_{c}^{2}:=\beta_{c}^{2}(\alpha)\leq\infty$, such that
\begin{equation}\label{115}
\mathbbm{P}(\hat{Z}_{\infty,\beta}^{\omega}=0)=\begin{cases}
0,&\quad\mbox{if}~\beta\in\{0\}\cup(0,\beta_{c}^{1}),\\
1,&\quad\mbox{if}~\beta>\beta_{c}^{1}.
\end{cases}
\end{equation}
And
\begin{equation}\label{116}
p(\beta)\begin{cases}
=0,&\quad\mbox{if}~\beta\in[0,\beta_{c}^{2}]\\
<0,&\quad\mbox{if}~\beta>\beta_{c}^{2}.
\end{cases}
\end{equation}
\end{theorem}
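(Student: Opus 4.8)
The plan is to define the two thresholds as the natural suprema and then reduce the statement to two monotonicity claims, one for each phase boundary. Set $\beta_c^1:=\sup\{\beta\ge0:\mathbbm{P}(\hat{Z}_{\infty,\beta}^\omega>0)=1\}$ and $\beta_c^2:=\sup\{\beta\ge0:p(\beta)=0\}$; both sets contain $\beta=0$ (there $\hat{Z}_{N,0}^\omega\equiv1$ and $p(0)=0$), so $\beta_c^1,\beta_c^2\in[0,\infty]$ are well defined. Two easy observations fix the relative position and the trivial implications. First, if weak disorder holds at $\beta$, then the martingale $\hat{Z}_{N,\beta}^\omega$ converges a.s.\ to $\hat{Z}_{\infty,\beta}^\omega\in(0,\infty)$, whence $\frac1N\log\hat{Z}_{N,\beta}^\omega\to0$ and $p(\beta)=0$ by \eqref{114}; thus $\{\text{weak disorder}\}\subseteq\{p=0\}$, giving $\beta_c^1\le\beta_c^2$. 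Second, if $p(\beta)<0$ then $\hat{Z}_{N,\beta}^\omega$ decays exponentially, so $\hat{Z}_{\infty,\beta}^\omega=0$ and strong disorder holds, consistent with the claimed diagram. Hence the whole statement reduces to showing that each of the two regions is an interval containing the origin, that is, to two monotonicity statements in $\beta$.

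For $\beta_c^2$ I would show that $\beta\mapsto\lambda(\beta)-F(\beta)=-p(\beta)$ is non-decreasing on the range where $\lambda$ is finite; since it is nonnegative (Jensen) with value $0$ at $\beta=0$ and continuous, this immediately yields $\{p=0\}=[0,\beta_c^2]$ and $p<0$ on $(\beta_c^2,\infty)$, matching the closed bracket in \eqref{116}. Both $\lambda$ and $f_N(\beta):=\frac1N\mathbbm{E}[\log Z_{N,\beta}^\omega]$ are convex in $\beta$ (the latter because $\log Z_{N,\beta}^\omega$ is a log-Laplace transform under $\mathbf{P}$), and $f_N\to F$ pointwise by \eqref{113}. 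So it suffices, using that derivatives of pointwise-convergent convex functions converge at every differentiability point of the limit, to prove the finite-$N$ bound $f_N'(\beta)\le\lambda'(\beta)$. Since $f_N'(\beta)=\frac1N\sum_{n=1}^N\mathbbm{E}\big[\mathbf{E}_{N,\beta}^\omega[\omega_{n,S_n}]\big]$, this reduces to the single-slice inequality $\mathbbm{E}[\mathbf{E}_{N,\beta}^\omega[\omega_{n,S_n}]]\le\lambda'(\beta)$, which says the quenched energy collected at one site never exceeds the annealed tilt $\lambda'(\beta)$. I would prove it by conditioning on the environment off the time-$n$ slice --- writing $Z_{N,\beta}^\omega=\sum_x A_x e^{\beta\omega_{n,x}}$ with $A_x\ge0$ independent of $(\omega_{n,y})_y$ --- and a tilting/association argument on the i.i.d.\ family $(\omega_{n,x})_x$; the finiteness in \eqref{104} controls the exchange of expectation and derivative. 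Combining $\lambda'-F'\ge0$ a.e.\ with absolute continuity of $-p$ gives the asserted monotonicity.

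The delicate part, and the main obstacle, is the existence of $\beta_c^1$, i.e.\ that weak disorder is monotone: for $\beta_1<\beta_2$, weak disorder at $\beta_2$ must force weak disorder at $\beta_1$. In the Gaussian case there is a clean argument: using $\beta_2\omega\stackrel{d}{=}\beta_1\omega+\sqrt{\beta_2^2-\beta_1^2}\,\omega'$ with $\omega'$ an independent copy, integrating out $\omega'$ gives the exact identity $\hat{Z}_{N,\beta_1}^\omega=\mathbbm{E}'[\hat{Z}_{N,\beta_2}^{\omega,\omega'}\mid\omega]$, so the limit $\hat{Z}_{\infty,\beta_1}^\omega=\mathbbm{E}'[\hat{Z}_{\infty,\beta_2}^{\omega,\omega'}\mid\omega]$ satisfies $\mathbbm{E}[\hat{Z}_{\infty,\beta_1}^\omega]=\mathbbm{E}[\hat{Z}_{\infty,\beta_2}^\omega]=1$; weak disorder thus descends in $\beta$. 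For a general environment satisfying only \eqref{104} this additive-noise coupling is unavailable, and that is exactly where the real work lies; I would recover the monotonicity along the lines of \cite{comets2006directed,MR2381596}, controlling $\mathbbm{E}[\hat{Z}_{\infty,\beta}^\omega]$ together with the overlap sum $\sum_N I_N$ of Theorem \ref{T103}. I expect this step to transfer from the nearest-neighbor setting essentially unchanged, since the dichotomy is a \emph{soft} martingale property that does not see the heavy-tailed increment law beyond the filtration structure; the long-range nature of $S$ enters only through $I_N$ in \eqref{109}, not through the monotonicity mechanism itself. Assembling the two monotonicity statements with $\beta_c^1\le\beta_c^2$ then yields \eqref{115}--\eqref{116}, the parenthetical $\{0\}\cup(0,\beta_c^1)$ merely recording that $\beta=0$ lies (degenerately) in weak disorder.
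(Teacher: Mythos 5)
Two remarks before the comparison: this theorem is one the paper \emph{cites} (``We summarize their results as follows'') and never proves, so the benchmark is the literature proof in \cite{comets2006directed}, not anything in this manuscript. Measured against that, your architecture (define $\beta_{c}^{1},\beta_{c}^{2}$ as suprema, reduce to two monotonicity claims, note weak disorder $\Rightarrow p=0$ so $\beta_{c}^{1}\leq\beta_{c}^{2}$) is exactly right, and your $\beta_{c}^{2}$ half is essentially the proof of \cite[Theorem 3.2(b)]{comets2006directed}. Your slice inequality is true and your sketch closes: writing $Z_{N,\beta}^{\omega}=\sum_{x}A_{x}e^{\beta\omega_{n,x}}$ with $A_{x}$ independent of the time-$n$ slice, and conditioning further on $(\omega_{n,y})_{y\neq x}$ so that $B=\sum_{y\neq x}A_{y}e^{\beta\omega_{n,y}}$ is a constant, the claim reduces to $\mathbbm{E}[\omega\,\phi(\omega)]\leq\lambda'(\beta)\,\mathbbm{E}[\phi(\omega)]$ for $\phi(\omega)=Ae^{\beta\omega}/(Ae^{\beta\omega}+B)$; rewriting under the tilted law $d\mu_{\beta}=e^{\beta\omega-\lambda(\beta)}d\mathbbm{P}$, under which $\omega$ has mean $\lambda'(\beta)$, this is Chebyshev's association inequality applied to the increasing function $\omega$ and the decreasing function $A/(Ae^{\beta\omega}+B)$. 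Summing over $x$ and over slices gives $f_{N}'\leq\lambda'$, and your convexity bookkeeping (pointwise convergence of convex functions, continuity of $p$ giving the closed bracket at $\beta_{c}^{2}$) is standard and correct.

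The genuine gap is where you flagged it: monotonicity of the weak disorder region for non-Gaussian environments, which you handle only by deferral to \cite{comets2006directed, MR2381596}. Two corrections there. First, your diagnosis is slightly off: the $0$--$1$ dichotomy at \emph{fixed} $\beta$ is indeed a soft tail-field argument, but the interval structure in $\beta$ is not soft, and it is not obtained by ``controlling $\mathbbm{E}[\hat{Z}_{\infty,\beta}^{\omega}]$ together with the overlap sum $\sum_{N}I_{N}$'' --- the overlap characterization plays no role in this step. The actual mechanism is the natural generalization of your Gaussian coupling: one shows that $\beta\mapsto\mathcal{L}(e^{\beta\omega-\lambda(\beta)})$ is nondecreasing in the \emph{convex order}. (Quick check against $f(x)=(x-a)_{+}$: $\frac{d}{d\beta}\mathbbm{E}[f(e^{\beta\omega-\lambda(\beta)})]=\mathbbm{E}_{\mu_{\beta}}[(\omega-\lambda'(\beta))\mathbbm{1}_{\{\omega>c_{\beta}(a)\}}]\geq0$, since a centered variable restricted to an upper half-line has nonnegative mean.) By Strassen's theorem this yields, site by site and independently, a martingale kernel realizing $e^{\beta_{1}\omega-\lambda(\beta_{1})}$ as a conditional expectation of a $\beta_{2}$-weight, whence $\hat{Z}_{N,\beta_{1}}^{\omega}=\mathbbm{E}[\hat{Z}_{N,\beta_{2}}^{\tilde{\omega}}\mid\mathcal{G}]$ for a suitable enlarged field, and conditional Fatou gives $\hat{Z}_{\infty,\beta_{1}}^{\omega}\geq\mathbbm{E}[\hat{Z}_{\infty,\beta_{2}}^{\tilde{\omega}}\mid\mathcal{G}]>0$ a.s.\ under weak disorder at $\beta_{2}$. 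The same fix applies to your Gaussian paragraph: you assert the limit identity $\hat{Z}_{\infty,\beta_{1}}^{\omega}=\mathbbm{E}'[\hat{Z}_{\infty,\beta_{2}}^{\omega,\omega'}\mid\omega]$, but exchanging $N\to\infty$ with $\mathbbm{E}'$ needs justification; conditional Fatou yields only ``$\geq$'', which is all you need for positivity to descend. Second, your closing heuristic is correct and worth keeping: the entire comparison is blind to the law of the reference walk $S$, which is precisely why \cite{MR2381596} could transfer the nearest-neighbor phase diagram to the long-range model without change --- the heavy tails enter this theorem nowhere.
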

\begin{remark}\label{T105}
The reason that $p(\beta_{c}^{2})=0$ is that $p(\beta)$ is continuous in $\beta$, since $\frac{1}{N}\log\hat{Z}_{N,\beta}^{\omega}$ is convex in $\beta$. It is conjectured that there is no intermediate phase between weak disorder and very strong disorder (except at the critical point $\beta=\beta_{c}^{2}$), i.e., $\beta_{c}^{1}=\beta_{c}^{2}$. But so far this conjecture has only been proved for the nearest-neighbor directed polymer on $\mathbbm{Z}^{1+1}$ in \cite{comets2006majorizing} and on $\mathbbm{Z}^{2+1}$ in \cite{lacoin2010new}. A recent more refined result for the nearest-neighbor polymer on $\mathbbm{Z}^{2+1}$ is achieved in \cite{berger2015high}, which gives the exact asymptotic behavior of $p(\beta)$ at high temperature. Another open question is to determine whether there is weak disorder or strong disorder at critical point $\beta=\beta_{c}^{1}$.
\end{remark}

To close this subsection, we cite two quantitative results which give sufficient conditions for the existence of a weak disorder regime, respectively a strong disorder regime.
\begin{theorem}[Comets~\cite{MR2381596}]\label{T106}
If the heavy-tailed random walk $S$ is transient, and denote
\begin{equation}\label{117}
\pi_{p}:=\mathbf{P}^{\bigotimes2}(\exists n\geq1, \mbox{s.t.}~S_{n}-\tilde{S}_{n}=0)<1,
\end{equation}
where $\tilde{S}$ is an i.i.d. copy of $S$, then for all $\beta$ such that
\begin{equation}\label{118}
\lambda(2\beta)-2\lambda(\beta)<-\log{\pi_{p}},
\end{equation}
weak disorder holds.
\end{theorem}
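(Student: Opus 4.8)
The plan is to prove weak disorder via the classical second-moment method. Since $\hat{Z}_{N,\beta}^{\omega}$ is a nonnegative $\mathbbm{P}$-martingale converging a.s.\ to $\hat{Z}_{\infty,\beta}^{\omega}$, it suffices to show it is bounded in $L^{2}(\mathbbm{P})$. Indeed, $L^{2}$-boundedness yields uniform integrability, hence $L^{1}$-convergence, so that $\mathbbm{E}[\hat{Z}_{\infty,\beta}^{\omega}]=\mathbbm{E}[\hat{Z}_{0,\beta}^{\omega}]=1>0$; this forces $\mathbbm{P}(\hat{Z}_{\infty,\beta}^{\omega}>0)>0$, and by the Kolmogorov $0$--$1$ law discussed above we conclude $\mathbbm{P}(\hat{Z}_{\infty,\beta}^{\omega}=0)=0$, i.e.\ weak disorder holds. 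Note that the hypothesis only has a chance of being satisfiable in the transient case: convexity of $\lambda$ gives $\lambda(2\beta)-2\lambda(\beta)\geq0$, while $-\log\pi_{p}>0$ requires $\pi_{p}<1$, which is exactly where transience of $S$ enters.

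First I would compute the second moment explicitly. Writing the square of the partition function with two independent copies $S^{1},S^{2}$ of the walk, applying Fubini, and using that the environment variables $\omega_{n,x}$ are independent across distinct space-time sites, each time $n$ contributes a factor $e^{2\lambda(\beta)}$ when $S_{n}^{1}\neq S_{n}^{2}$ and a factor $e^{\lambda(2\beta)}$ when $S_{n}^{1}=S_{n}^{2}$. After dividing by $e^{2N\lambda(\beta)}$ this gives the identity
\begin{equation*}
\mathbbm{E}\big[(\hat{Z}_{N,\beta}^{\omega})^{2}\big]=\mathbf{E}^{\bigotimes2}\Big[\exp\Big(\gamma\sum_{n=1}^{N}\mathbbm{1}_{\{S_{n}^{1}=S_{n}^{2}\}}\Big)\Big],\qquad\gamma:=\lambda(2\beta)-2\lambda(\beta)\geq0,
\end{equation*}
so everything reduces to controlling the exponential moment of the collision count $V_{N}:=\sum_{n=1}^{N}\mathbbm{1}_{\{S_{n}^{1}=S_{n}^{2}\}}$ of the two copies.

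Next I would identify the law of the total collision count $V_{\infty}=\lim_{N}V_{N}$. The collisions of $(S^{1},S^{2})$ are precisely the returns to $0$ of the difference walk $S^{1}-S^{2}$, and $\pi_{p}$ is by definition the probability that this walk ever returns to $0$. Applying the strong Markov property at successive return times, each excursion independently returns with probability $\pi_{p}$, so $V_{\infty}$ is geometric with $\mathbf{P}^{\bigotimes2}(V_{\infty}\geq k)=\pi_{p}^{k}$ for all $k\geq0$. Since $\gamma\geq0$, the sequence $e^{\gamma V_{N}}$ is nondecreasing in $N$, so monotone convergence gives $\sup_{N}\mathbbm{E}[(\hat{Z}_{N,\beta}^{\omega})^{2}]=\mathbf{E}^{\bigotimes2}[e^{\gamma V_{\infty}}]$. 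Summing the resulting geometric series,
\begin{equation*}
\mathbf{E}^{\bigotimes2}\big[e^{\gamma V_{\infty}}\big]=(1-\pi_{p})\sum_{k=0}^{\infty}\big(e^{\gamma}\pi_{p}\big)^{k},
\end{equation*}
which is finite precisely when $e^{\gamma}\pi_{p}<1$, i.e.\ when $\gamma=\lambda(2\beta)-2\lambda(\beta)<-\log\pi_{p}$. Under the stated hypothesis this holds, delivering the required $L^{2}$-bound and completing the argument.

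The main point requiring care, and the only place where the long-range nature of the walk matters, is securing the exact renewal identity for $V_{\infty}$ rather than a crude bound: it is this clean geometric law that makes the constant $-\log\pi_{p}$ in the criterion sharp. This in turn rests on $\pi_{p}<1$, i.e.\ transience of the difference walk; since $S$ lies in the domain of attraction of an $\alpha$-stable law and is transient, its symmetrized difference $S^{1}-S^{2}$ is again in this domain with the same index and hence transient, so $V_{\infty}<\infty$ a.s.\ and the renewal structure is available. Everything else is the standard Imbrie--Spencer/Bolthausen second-moment computation, which transfers essentially verbatim from the nearest-neighbor setting.
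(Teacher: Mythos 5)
Your proposal is correct and coincides with the classical second-moment ($L^{2}$) argument by which this result is proved in the cited reference \cite{MR2381596} -- the paper itself states Theorem \ref{T106} without proof, attributing it to Comets -- namely the identity $\mathbbm{E}[(\hat{Z}_{N,\beta}^{\omega})^{2}]=\mathbf{E}^{\bigotimes2}[e^{\gamma V_{N}}]$ with $\gamma=\lambda(2\beta)-2\lambda(\beta)$, the geometric law of the collision count via the strong Markov property, and uniform integrability of the martingale. The one point to phrase with care is your remark that transience of $S$ transfers to the difference walk $S^{1}-S^{2}$: the stable index alone does not decide recurrence when $\alpha=1$, but since both walks share the same norming sequence $a_{n}$, the criterion $\sum_{n}1/a_{n}=\infty$ of Proposition \ref{T301} applies to both simultaneously (and in any case $\pi_{p}<1$ is explicitly part of the theorem's hypothesis), so your argument stands.
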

\begin{theorem}[Comets~\cite{MR2381596}]\label{T107}
For any $\alpha\in(0,2]$, if
\begin{equation}\label{119}
\beta\lambda'(\beta)-\lambda(\beta)>-\sum\limits_{x\in\mathbbm{Z}}q(x)\log{q(x)},
\end{equation}
then $p(\beta)<0$, where $q(x):=\mathbf{P}(S_{1}=x)$.
\end{theorem}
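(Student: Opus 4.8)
The plan is to prove this by the fractional moment method: for an exponent $\theta\in(0,1)$ I would produce an exponential bound on $\mathbbm{E}[(\hat{Z}_{N,\beta}^{\omega})^{\theta}]$ and then optimize over $\theta$ to get a strictly negative bound on $p(\beta)$. Writing the normalized partition function as a sum over trajectories $s=(s_n)_{n=0}^{N}$ with increments $\xi_n=s_n-s_{n-1}$,
\begin{equation*}
\hat{Z}_{N,\beta}^{\omega}=\sum_{s}\Big(\prod_{n=1}^{N}q(\xi_n)\Big)\exp\Big(\sum_{n=1}^{N}(\beta\omega_{n,s_n}-\lambda(\beta))\Big),
\end{equation*}
I would first apply the elementary inequality $(\sum_i a_i)^{\theta}\le\sum_i a_i^{\theta}$, valid for nonnegative $a_i$ and $\theta\in(0,1)$, to bound $(\hat{Z}_{N,\beta}^{\omega})^{\theta}$ by a sum over trajectories. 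Since for a fixed $s$ the points $(n,s_n)$, $1\le n\le N$, have distinct time-coordinates, the variables $\omega_{n,s_n}$ are independent, so taking $\mathbbm{E}$ factorizes over $n$ and produces $\mathbbm{E}[e^{\theta\beta\omega_{n,s_n}}]=e^{\lambda(\theta\beta)}$ at each step.

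Carrying this out and using $\sum_{s}\prod_{n}q(\xi_n)^{\theta}=(\sum_{x}q(x)^{\theta})^{N}$, I expect the clean bound
\begin{equation*}
\mathbbm{E}\big[(\hat{Z}_{N,\beta}^{\omega})^{\theta}\big]\le\exp\big(N\phi_\beta(\theta)\big),\qquad \phi_\beta(\theta):=\log\Big(\sum_{x\in\mathbbm{Z}}q(x)^{\theta}\Big)+\lambda(\theta\beta)-\theta\lambda(\beta).
\end{equation*}
Combining this with Jensen's inequality in the form $\mathbbm{E}[\log\hat{Z}_{N,\beta}^{\omega}]=\tfrac{1}{\theta}\mathbbm{E}[\log(\hat{Z}_{N,\beta}^{\omega})^{\theta}]\le\tfrac1\theta\log\mathbbm{E}[(\hat{Z}_{N,\beta}^{\omega})^{\theta}]$ and recalling \eqref{113}, I obtain for every admissible $\theta\in(0,1)$ the deterministic bound $p(\beta)\le\phi_\beta(\theta)/\theta$. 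It therefore suffices to exhibit one $\theta\in(0,1)$ with $\phi_\beta(\theta)<0$.

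To find such a $\theta$ I would analyze $\phi_\beta$ near $\theta=1$. One checks directly that $\phi_\beta(1)=\log 1+\lambda(\beta)-\lambda(\beta)=0$, so it is enough to show that $\phi_\beta$ is strictly increasing at $\theta=1$, since this forces $\phi_\beta(\theta)<0$ for $\theta$ slightly below $1$. Differentiating,
\begin{equation*}
\phi_\beta'(1)=\frac{\sum_x q(x)\log q(x)}{\sum_x q(x)}+\beta\lambda'(\beta)-\lambda(\beta)=\beta\lambda'(\beta)-\lambda(\beta)-\Big(-\sum_{x}q(x)\log q(x)\Big),
\end{equation*}
which is strictly positive precisely under hypothesis \eqref{119}. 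Hence $\phi_\beta(\theta)<0$ for some $\theta\in(0,1)$, giving $p(\beta)<0$ as claimed.

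The step requiring genuine care in the long-range setting is the finiteness and smoothness of $\theta\mapsto\log\sum_x q(x)^{\theta}$ near $\theta=1$. Because of the heavy tails, $q(x)\asymp|x|^{-(1+\alpha)}$ up to a slowly varying factor, so $\sum_x q(x)^{\theta}$ diverges for $\theta\le\frac{1}{1+\alpha}$; this is harmless here since I only use $\theta\uparrow1$, where both $\sum_x q(x)^{\theta}$ and $\sum_x q(x)^{\theta}\log q(x)$ converge and differentiation under the summation sign is justified by dominated convergence. I would also record that the entropy $-\sum_x q(x)\log q(x)$ is finite under \eqref{101}--\eqref{102}, so that \eqref{119} is meaningful, and that $\lambda$ is smooth at $\beta$ whenever $\beta$ lies in the interior of $[-c,c]$, where $\lambda'(\beta)$ is evaluated. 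This convergence and differentiability bookkeeping, rather than any conceptual difficulty, is the main obstacle.
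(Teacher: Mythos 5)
Your proposal is correct, and it is essentially the standard proof of this statement: the paper itself cites Theorem \ref{T107} from Comets \cite{MR2381596} without reproving it, and the argument there (as in the nearest-neighbor case of Comets--Shiga--Yoshida, where $-\sum_x q(x)\log q(x)=\log 2d$) is exactly your fractional-moment computation combined with the Jensen step, which the paper itself records as \eqref{315}: bound $\mathbbm{E}[(\hat{Z}_{N,\beta}^{\omega})^{\theta}]$ by $\left(\sum_x q(x)^{\theta}\right)^{N}e^{N(\lambda(\theta\beta)-\theta\lambda(\beta))}$ and differentiate $\phi_\beta$ at $\theta=1$.

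One justification in your final paragraph needs repair, though the conclusion it supports is true. The standing assumption \eqref{101} is a condition on the \emph{tail} $\mathbf{P}(|S_1|\geq n)$ and does not imply the pointwise asymptotics $q(x)\asymp|x|^{-(1+\alpha)}$ up to slowly varying corrections (the support of $S_1$ may be sparse or the masses highly irregular), so you cannot invoke that to control $\sum_x q(x)^{\theta}$. The fix is a dyadic-block argument using only the tail: by the power-mean inequality, for $\theta\in(0,1)$,
\begin{equation*}
\sum_{2^{j}\leq|x|<2^{j+1}}q(x)^{\theta}\leq\left(2^{j+2}\right)^{1-\theta}\left(\mathbf{P}(|S_{1}|\geq2^{j})\right)^{\theta}\leq C\,2^{j(1-\theta(1+\alpha))}L(2^{j})^{\theta},
\end{equation*}
which is summable in $j$ for any $\theta>\frac{1}{1+\alpha}$, hence for $\theta$ near $1$; the same bound with the extra factor $|\log q(x)|\leq C_{\epsilon}q(x)^{-\epsilon}$ handles $\sum_x q(x)^{\theta}|\log q(x)|$ and justifies differentiation, and for $\alpha=2$ under \eqref{102} a Chebyshev-type tail bound $\mathbf{P}(|S_1|\geq n)=O(n^{-2+\epsilon})$ plays the same role. (Your parenthetical claim that $\sum_x q(x)^{\theta}$ diverges for $\theta\leq\frac{1}{1+\alpha}$ also need not hold in general, but it is irrelevant to the proof; the finiteness of the entropy in \eqref{119} is asserted in Remark \ref{T108} and follows from the same block estimate.)
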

\begin{remark}\label{T108}
By Theorem \ref{T106} and Remark \ref{T102}, there is always a weak disorder regime for $\alpha\in(0,1)$, since $\lambda(0)=0$. In Theorem \ref{T107}, $\sum\limits_{x\in\mathbbm{Z}}q(x)\log{q(x)}$ is always finite and for the random environment satisfying $\rm{essup}|\omega_{1,0}|=\infty$, we have $\beta\lambda'(\beta)-\lambda(\beta)\to\infty$ as $\beta\to\infty$ (see \cite{MR2381596}). Hence, strong disorder holds at low temperature.
\end{remark}
\subsection{Main results}
We summarize the results of this paper in this subsection. Unless otherwise specified, the random walk $S$ and the random environment $\omega$ that we consider here are introduced in subsection 1.1, from \eqref{101} to \eqref{104}. In Theorem \ref{T118}, we need some extra but mild conditions on $S$ and $\omega$, which we will mention there.

We first study the path behavior of the long-range directed polymer chain in the weak disorder regime. As in \cite[Theorem 1.2]{comets2006directed}, we will establish a stable-law version of invariance principle under the polymer measure $\mathbf{P}_{N,\beta}^{\omega}$. For heavy-tailed random walks that were introduced in \eqref{101}, \eqref{102} and \eqref{103}, define the following c$\grave{\mbox{a}}$dl$\grave{\mbox{a}}$g process
\begin{equation}\label{120}
X_{t}^{N}=\frac{S_{n}}{a_{N}},~~\mbox{for}~t\in\left[\frac{n}{N},\frac{n+1}{N}\right),~n=0,1,\ldots,N.
\end{equation}
Then $(X_{t}^{N})_{t\in[0,1]}$ converges to an $\alpha$-stable L\'evy process $(X_{t})_{t\in[0,1]}\in D[0,1]$ in distribution (see \cite[Proposition 3.4]{resnick1986point}), which we call an analogue of invariance principle for $\alpha$-stable process. Here $D[0,1]$ is the space of all functions on $[0,1]$ which are right continuous with left limits equipped with the Skorohod topology induced by the metric
\begin{equation}\label{121}
d(x,y)=\inf\limits_{\lambda\in\Lambda}\left\{\sup\limits_{0\leq t\leq1}|\lambda(t)-t|\vee\sup\limits_{0\leq t\leq1}|x(t)-y(\lambda(t))|\right\},
\end{equation}
where $\Lambda$ is the set of all the strictly increasing functions $\lambda(t)$ on $[0,1]$ with $\lambda(0)=0$ and $\lambda(1)=1$ (see \cite[Chapter 3]{billingsley2013convergence}).

Following the notations above, our first result is
\begin{theorem}\label{T109}
For the long-range directed polymer model defined in Section 1.1, assume that $\alpha\in(0,1]$ and weak disorder holds. Then for all bounded continuous functions $F$ on the path space $D[0,1]$, we have
\begin{equation}\label{122}
\mathbf{E}_{N,\beta}^{\omega}[F((X_{t}^{N})_{t\in[0,1]})]\overset{\mathbbm{P}}\to\mathbf{E}^{X}[F((X_{t})_{t\in[0,1]})]~~\mbox{as}~N\to\infty,
\end{equation}
where $\mathbf{E}^{X}$ denotes the expectation for the $\alpha$-stable L\'evy process $X$.
\end{theorem}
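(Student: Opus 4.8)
The plan is to prove the ``weak convergence in probability'' statement \eqref{122} directly for each bounded continuous $F$, without separating into finite-dimensional distributions and tightness. Write the normalized weight $e_N^\omega(S):=\exp\big(\sum_{n=1}^N\beta\omega_{n,S_n}-N\lambda(\beta)\big)$, so that $\hat Z_{N,\beta}^\omega=\mathbf E[e_N^\omega(S)]$ and
\[
\mathbf E_{N,\beta}^\omega[F(X^N)]-\mathbf E[F(X^N)]=\frac{\mathcal D_N}{\hat Z_{N,\beta}^\omega},\qquad \mathcal D_N:=\mathbf E\big[(F(X^N)-\mathbf E[F(X^N)])\,e_N^\omega(S)\big].
\]
Since $X^N\Rightarrow X$ under $\mathbf P$ and $F$ is bounded continuous, $\mathbf E[F(X^N)]\to\mathbf E^X[F(X)]$; and in weak disorder $\hat Z_{N,\beta}^\omega\to\hat Z_{\infty,\beta}^\omega>0$ almost surely. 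Hence it suffices to prove $\mathcal D_N\overset{\mathbbm P}{\to}0$. Note $\mathbbm E[\mathcal D_N]=0$, because for a fixed path the sites $(n,S_n)_{1\le n\le N}$ are distinct and $\mathbbm E[e_N^\omega(S)]=1$; so the task is to control the $\mathbbm P$-fluctuations of $\mathcal D_N$.

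The natural tool is a second moment. Introducing two independent copies $S^1,S^2$ of the walk, with rescaled processes $X^{N,1},X^{N,2}$ defined as in \eqref{120}, and integrating out the environment,
\[
\mathbbm E[\mathcal D_N^2]=\mathbf E^{\bigotimes 2}\Big[\big(F(X^{N,1})-\mathbf E[F(X^N)]\big)\big(F(X^{N,2})-\mathbf E[F(X^N)]\big)\,e^{c(\beta)V_N}\Big],
\]
where $c(\beta):=\lambda(2\beta)-2\lambda(\beta)\ge 0$ and $V_N:=\sum_{n=1}^N\mathbbm 1\{S_n^1=S_n^2\}$ is the number of collisions of the two walks, the only source of dependence between $e_N^\omega(S^1)$ and $e_N^\omega(S^2)$ after averaging over $\omega$.

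I would show this tends to $0$ by a decoupling of scales. For $\alpha\in(0,1]$ the walk is transient (or critical; see Remark \ref{T102}), so collisions occur only at early, microscopic times and $V_N\uparrow V_\infty<\infty$ almost surely, with $V_\infty$ measurable with respect to finitely many early increments. On the other hand $(X^{N,1},X^{N,2})\Rightarrow(X^1,X^2)$, two \emph{independent} $\alpha$-stable processes, and at the macroscopic scale the early increments contribute $o(a_N)$, so that in the joint limit the pair $(X^1,X^2)$ is independent of $V_\infty$. Using $e^{c(\beta)V_N}\le e^{c(\beta)V_\infty}$ as an integrable dominating function, one passes to the limit and obtains
\[
\mathbbm E[\mathcal D_N^2]\longrightarrow \mathbf E^{\bigotimes 2}\big[e^{c(\beta)V_\infty}\big]\cdot\Big(\mathbf E^{X}\big[F(X)-\mathbf E^X[F(X)]\big]\Big)^{2}=0,
\]
the second factor vanishing by the centering and the independence of the two stable limits.

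The main obstacle is the integrability $\mathbf E^{\bigotimes 2}[e^{c(\beta)V_\infty}]<\infty$ required for the dominated-convergence step. Since $\mathbbm E[(\hat Z_{N,\beta}^\omega)^2]=\mathbf E^{\bigotimes 2}[e^{c(\beta)V_N}]$, this is exactly square-integrability of the martingale, i.e.\ the regime of Theorem \ref{T106} (there $c(\beta)<-\log\pi_p$), which is in general a proper subset of the weak disorder regime $\beta<\beta_c^1$. To cover the entire weak disorder regime, where $\hat Z_{\infty,\beta}^\omega$ need not lie in $L^2(\mathbbm P)$, the plain second moment is unavailable and must be replaced by a more robust estimate: I would truncate the collision weight at a level $K$, handle the bounded part by the decoupling above, and absorb the unbounded part using the uniform integrability of $(\hat Z_{N,\beta}^\omega)_N$ (which holds in weak disorder, being an $L^1$-convergent nonnegative martingale) together with the summability $\sum_N I_N<\infty$ from Theorem \ref{T103}. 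Making this truncation quantitative, and rigorously justifying the scale-decoupling that renders the $V_\infty$-dependence and the macroscopic paths asymptotically independent in $D[0,1]$, is where the real work lies.
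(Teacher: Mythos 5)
Your algebra is sound as far as it goes: $\mathbbm{E}[\mathcal{D}_N]=0$, the replica identity $\mathbbm{E}[\mathcal{D}_N^2]=\mathbf{E}^{\bigotimes2}\big[(F(X^{N,1})-m_N)(F(X^{N,2})-m_N)e^{c(\beta)V_N}\big]$ with $c(\beta)=\lambda(2\beta)-2\lambda(\beta)$ is correct, and the scale-decoupling limit is provable. But note that $\mathbf{E}^{\bigotimes2}[e^{c(\beta)V_N}]=\mathbbm{E}[(\hat Z_{N,\beta}^{\omega})^2]$, so your argument is confined to the $L^2$ region of Theorem \ref{T106} --- which, as Remark \ref{T110} points out, merely recovers Comets' earlier scaling-limit result under condition \eqref{118}; the entire point of Theorem \ref{T109} is the extension to the whole weak disorder regime. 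There your closing sketch has a genuine gap, and I do not believe it can be repaired at the annealed level: outside the $L^2$ region $\mathbbm{E}[(\hat Z_{N,\beta}^{\omega})^2]\to\infty$, so $\mathbbm{E}[\mathcal{D}_N^2]$ itself may diverge, and truncating the collision weight does not help, since the tail $\mathbf{E}^{\bigotimes2}[e^{c(\beta)V_N};\,V_N>K]$ carries essentially all of the diverging exponential moment and does not vanish uniformly in $N$. The tools you propose to absorb it with are of the wrong type: uniform integrability of $(\hat Z_{N,\beta}^{\omega})_N$ is a first-moment statement about the environment, and the finiteness $\sum_N I_N<\infty$ from Theorem \ref{T103} is an almost sure statement about the \emph{quenched} polymer overlap under $\mathbbm{P}$; neither feeds into a bound on the annealed second moment of $\mathcal{D}_N$. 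This mismatch between quenched control and annealed moments is precisely the known obstruction, not a technicality.

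The paper circumvents it by never taking environment second moments of the partition function. It constructs the limiting time-inhomogeneous Markov chain $\mathbf{P}_{\beta,\mathrm{mc}}^{\omega}$ from the martingale limits $\hat Z_{\infty,\beta}^{\omega}(i,x)$ (well defined exactly because weak disorder holds), proves the averaged invariance principle for it using the absolute continuity $\mathbbm{E}\mathbf{P}_{\beta,\mathrm{mc}}^{\omega}\ll\mathbf{P}$ and a lacunary-subsequence averaging argument borrowed from almost-sure CLT theory (Proposition \ref{T205}), transfers to $\mathbf{P}_{N,\beta}^{\omega}$ via the uniform total-variation estimate of Proposition \ref{T203} combined with a Skorohod-distance comparison of $X^N$ and $X^{N-k}$, and only then upgrades averaged convergence to convergence in $\mathbbm{P}$-probability --- by computing a second moment in $\omega$ of the \emph{bounded} quenched expectation $\mathbf{E}_{\beta,\mathrm{mc}}^{\omega}[f_N]$ through the two-replica Lemma \ref{T204} (whose proof requires a Doob decomposition precisely because the two-replica normalized partition function is not a $\mathbbm{P}$-martingale). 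So an $L^2$ computation does appear, but applied to a quantity bounded by $\sup|F|$, where no moment assumption on $\hat Z_{N,\beta}^{\omega}$ is needed. If you wanted to salvage your route, the truncation would have to be performed quenched, on a high-probability environment event where the overlap sum is bounded --- and carrying that out is essentially what the Markov-chain machinery of Comets--Yoshida accomplishes.
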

This theorem says that in the weak disorder regime and under the polymer measure $\mathbf{P}_{N,\beta}^{\omega}$, the polymer chain converges to the same $\alpha$-stable L\'evy process as $S$ under the measure $\mathbf{P}$. It is expected that the "in probability" convergence \eqref{122} can be improved to an "almost sure" version, but we cannot prove it for the moment.
\begin{remark}\label{T110}
In \cite{MR2381596}, Comets proved a scaling limit result for the long-range directed polymer under a stronger condition \eqref{118}, which implies weak disorder. Here by applying the procedure developed in \cite{comets2006directed}, we can weaken the condition \eqref{118} and improve the scaling limit result to the analogue of invariance principle for $\alpha$-stable process in the entire weak disorder regime.
\end{remark}

Our second result concerns the phase diagram. We can characterize the phase diagram in Theorem \ref{T104} in more detail. More precisely, we prove
\begin{theorem}\label{T111}
Following the same notations and assumptions as in Theorem \ref{T104}, we have

$(\rm{i})~\beta_{c}^{1}=0$ if and only if $S$ is recurrent.

$(\rm{ii})~\beta_{c}^{1}=\beta_{c}^{2}=0$ for $\alpha\in(1,2]$.
\end{theorem}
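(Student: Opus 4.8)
The plan is to base everything on the normalized partition function $\hat{Z}_{N,\beta}^{\omega}$ and on its size-biasing. A useful preliminary reduction: by the interval structure in Theorem \ref{T104}, the strong-disorder set is $(\beta_c^1,\infty)$ and the set $\{p(\beta)<0\}$ is $(\beta_c^2,\infty)$, so to prove $\beta_c^1=0$ (resp. $\beta_c^2=0$) it suffices to exhibit strong disorder (resp. $p(\beta)<0$) for all sufficiently small $\beta>0$; in particular I may always assume $2\beta\leq c$ in \eqref{104}, which avoids any integrability worry at large $\beta$. Writing $W_n:=\hat{Z}_{n,\beta}^{\omega}/\hat{Z}_{n-1,\beta}^{\omega}$, the independence of the environment gives $\mathbbm{E}[W_n\mid\mathcal{G}_{n-1}]=1$ and $\mathbbm{E}[W_n^2\mid\mathcal{G}_{n-1}]=1+\gamma(\beta)I_n$ with $\gamma(\beta):=e^{\lambda(2\beta)-2\lambda(\beta)}-1>0$, whence
\[ \mathbbm{E}[(\hat{Z}_{N,\beta}^{\omega})^2]=\mathbf{E}^{\otimes 2}\Big[\exp\big((\lambda(2\beta)-2\lambda(\beta))\sum_{n=1}^N\mathbbm{1}_{\{S_n^1=S_n^2\}}\big)\Big]. \]
For part $(\mathrm{i})$ I prove the equivalent statement $\beta_c^1>0\Leftrightarrow S$ transient; for part $(\mathrm{ii})$ it then suffices to prove $\beta_c^2=0$, and $\beta_c^1=0$ follows either from part $(\mathrm{i})$ (as $S$ is recurrent for $\alpha\in(1,2]$ by Remark \ref{T102}) or from $0\leq\beta_c^1\leq\beta_c^2$.

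First suppose $S$ is transient. The difference walk $D_n:=S_n-\tilde{S}_n$ is symmetric and attracted to a symmetric $\alpha$-stable law, and by the stable local limit theorem $\mathbf{P}(S_n=0)$ and $\mathbf{P}(D_n=0)=\sum_x\mathbf{P}(S_n=x)^2$ are of the same order $a_n^{-1}$; therefore $\sum_n\mathbf{P}(D_n=0)$ and $\sum_n\mathbf{P}(S_n=0)$ converge together, so transience of $S$ is equivalent to transience of $D$, i.e. to $\pi_p<1$. Since $\lambda(2\beta)-2\lambda(\beta)\to0$ as $\beta\downarrow0$, the condition \eqref{118} of Theorem \ref{T106} holds on some interval $(0,\beta_0)$, hence weak disorder holds there and $\beta_c^1\geq\beta_0>0$.

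For the converse assume $S$ recurrent; by the reduction above I only need strong disorder for all small $\beta>0$, equivalently $\sum_n I_n=\infty$ a.s.\ by \eqref{110}. I would argue by size-biasing: let $\tilde{\mathbbm{P}}$ be the probability measure on $\mathcal{G}_\infty$ with $d\tilde{\mathbbm{P}}/d\mathbbm{P}|_{\mathcal{G}_N}=\hat{Z}_{N,\beta}^{\omega}$. The Lebesgue decomposition of a likelihood-ratio martingale gives the dichotomy that either $\tilde{\mathbbm{P}}\ll\mathbbm{P}$ and $\hat{Z}_{N,\beta}^{\omega}\to\hat{Z}_{\infty,\beta}^{\omega}\in(0,\infty)$ $\tilde{\mathbbm{P}}$-a.s.\ (weak disorder), or $\tilde{\mathbbm{P}}\perp\mathbbm{P}$ and $\hat{Z}_{N,\beta}^{\omega}\to\infty$ $\tilde{\mathbbm{P}}$-a.s.\ (strong disorder). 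A direct computation shows $\hat{Z}_{N,\beta}^{\omega}$ is a $\tilde{\mathbbm{P}}$-submartingale with $\tilde{\mathbbm{E}}[\hat{Z}_{N,\beta}^{\omega}]=\mathbbm{E}[(\hat{Z}_{N,\beta}^{\omega})^2]$, and recurrence makes $\sum_n\mathbbm{1}_{\{S_n^1=S_n^2\}}\to\infty$, so by the displayed identity this annealed mean diverges. The hard part, and the main obstacle, is that this divergence alone does not decide the dichotomy, since the second moment blows up strictly inside the weak-disorder phase; to rule out the absolutely-continuous alternative I would use the explicit description of $\tilde{\mathbbm{P}}$ (the environment exponentially tilted along an independent reference path $\tilde{S}\sim\mathbf{P}$) and show that a $\tilde{\mathbbm{P}}$-typical polymer overlaps $\tilde{S}$ order $\sum_{n\leq N}\mathbf{P}(D_n=0)\to\infty$ times, thereby accumulating unbounded energy and forcing $\hat{Z}_{N,\beta}^{\omega}\to\infty$ $\tilde{\mathbbm{P}}$-a.s. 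This adapts the localization argument of Comets--Yoshida \cite{comets2006directed} and Comets \cite{MR2381596} to the stable walk, the only new input being the stable local limit theorem used above; hence $\beta_c^1=0$.

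Finally I prove $p(\beta)<0$ for every $\beta>0$ when $\alpha\in(1,2]$; again it suffices to treat all small $\beta>0$. Theorem \ref{T107} yields this only at large $\beta$, so for small $\beta$ I would run the fractional-moment method with coarse graining: fix $\theta\in(0,1)$ and a block length $m=m(\beta)$, and bound $\mathbbm{E}[(\hat{Z}_{N,\beta}^{\omega})^{\theta}]$ by a product of per-block fractional moments computed after tilting the environment inside each block. The single-block estimate is governed by the expected replica overlap $\sum_{n\leq m}\mathbf{P}(D_n=0)$, which for $\alpha\in(1,2]$ satisfies $\sum_{n\leq m}\mathbf{P}(D_n=0)\asymp m^{1-1/\alpha}/l(m)\to\infty$ since $1-1/\alpha>0$; choosing $m$ large pushes each block's fractional moment strictly below $1$ even for small $\beta$, and sub-multiplicativity gives $\mathbbm{E}[(\hat{Z}_{N,\beta}^{\omega})^{\theta}]\leq e^{-cN}$. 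A Borel--Cantelli argument then upgrades this to $p(\beta)\leq-c/\theta<0$, so $\beta_c^2=0$, and tracking the constants yields the explicit free-energy bounds. Here the main obstacle is genuinely the long-range mechanism: the walk can exit a block in a single large jump, so the spatial region to be tilted is unbounded and the change-of-measure cost must be controlled by truncating the large increments and checking that the remaining, essentially $\alpha$-stable and recurrent, part still generates the overlap needed to beat the entropy term.
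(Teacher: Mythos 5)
Your reduction via the monotonicity in Theorem \ref{T104}, the transient half of (i) (small $\beta$ in \eqref{118} via $\lambda(2\beta)-2\lambda(\beta)\to0$, plus the local-limit-theorem comparison $\mathbf{P}(D_n=0)\asymp a_n^{-1}\asymp\mathbf{P}(S_n=0)$), and your plan for (ii) are all sound; in fact your part (ii) is exactly the paper's proof of Theorem \ref{T115}: the block length $n(\beta)$ is fixed by making $\beta^2$ times the replica overlap $n/a_n=n^{1-1/\alpha}/l(n)$ of order one (the paper tilts the Gaussian mean by $\delta(n)=(C_1na_n)^{-1/2}$ on corridors of width $2C_1a_n$ around the coarse-grained trajectory, and handles the long jumps not by truncating increments but by summing the coarse-grained positions against the stable tail bound $\mathbf{E}|S_n/a_n|^{\gamma}$, $\gamma\in(1,\alpha)$, with $\theta\gamma>1$, cf.\ \eqref{331}).

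The genuine gap is the recurrent half of (i). Your size-biasing argument stops precisely where the difficulty begins: you correctly note that divergence of $\tilde{\mathbbm{E}}[\hat{Z}_{N,\beta}^{\omega}]=\mathbbm{E}[(\hat{Z}_{N,\beta}^{\omega})^{2}]$ cannot decide the dichotomy, but the proposed remedy --- that a $\tilde{\mathbbm{P}}$-typical polymer collides with the spine $\tilde{S}$ of order $\sum_{n\leq N}\mathbf{P}(D_n=0)$ times and that this ``accumulated energy'' forces $\hat{Z}_{N,\beta}^{\omega}\to\infty$ $\tilde{\mathbbm{P}}$-a.s.\ --- is asserted, not proved, and it is not a routine adaptation of Comets--Yoshida: even granting unboundedly many collisions with the single tilted path, extra energy along one path does not obviously blow up the normalized partition function (one must control the full measure, not one trajectory), and for small $\beta$ no quantitative mechanism is supplied. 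The paper instead proves Proposition \ref{T113} by a one-step fractional-moment recursion: writing $\hat{Z}_{N,\beta}^{\omega}/\hat{Z}_{N-1,\beta}^{\omega}=1+U_{N,\beta}^{\omega}$, using $f(u)=1+\theta u-(1+u)^{\theta}$ together with Lemmas \ref{T303} and \ref{T305} yields
\[
\mathbbm{E}[(\hat{Z}_{N,\beta}^{\omega})^{\theta}]\leq\Big(1-\frac{c_{3}}{|\Lambda_{N}|}\Big)\mathbbm{E}[(\hat{Z}_{N-1,\beta}^{\omega})^{\theta}]+\frac{2c_{3}}{|\Lambda_{N}|}\mathbf{P}(S_{N}\notin\Lambda_{N})^{\theta},
\]
and recurrence enters \emph{quantitatively} through Proposition \ref{T301}: $\sum_{n}a_{n}^{-1}=\infty$ (which your LLT computation essentially establishes), so one can pick windows $\Lambda_{N}=(-b_{N},b_{N})$ with $b_{N}/a_{N}\to\infty$ and $\sum_{n}b_{n}^{-1}=\infty$; iterating drives $\mathbbm{E}[(\hat{Z}_{\infty,\beta}^{\omega})^{\theta}]$ to $0$ for every $\beta>0$. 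In other words, the qualitative heuristic ``infinitely many replica collisions'' must be converted into the summability criterion $\sum a_n^{-1}=\infty$ fed into a moment recursion; until you supply a lemma doing the work of \eqref{303}--\eqref{304}, part (i) is unproved (part (ii) is unaffected, since there $\beta_c^1=0$ follows from $\beta_c^1\leq\beta_c^2=0$).
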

\begin{remark}\label{T112}
For the nearest-neighbor directed polymer model, $(\rm{i})$ has been proved in \cite[Theorem 2.3]{comets2003directed}. Our result is the analogue for long-range directed polymer.
\end{remark}
It can be seen from Theorem \ref{T106} that transience of $S$ implies the existence of a weak disorder regime. Therefore, to complete the statement of Theorem \ref{T111}\ (i), what we need to prove is the following result.
\begin{proposition}\label{T113}
If the heavy-tailed random walk $S$ is recurrent, then only strong disorder holds, i.e., $\beta_{c}^{1}=0$.
\end{proposition}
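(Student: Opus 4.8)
The plan is to reduce everything, via Theorem \ref{T103}, to a single divergence statement and then to prove that statement by comparing the \emph{quenched} overlap with the return probability of the associated difference walk. By \eqref{110}, strong disorder is equivalent to $\sum_{N\geq1}I_N=\infty$ $\mathbbm{P}$-a.s., and since $\{\hat{Z}_{\infty,\beta}^{\omega}=0\}$ is a tail event this probability is $0$ or $1$. Thus it suffices to prove that, for every $\beta>0$ and every recurrent $S$, one has $\sum_N I_N=\infty$ almost surely; I would argue by contradiction, assuming that weak disorder holds, i.e. $\hat{Z}_{\infty,\beta}^{\omega}>0$ $\mathbbm{P}$-a.s. and hence $\sum_N I_N<\infty$ $\mathbbm{P}$-a.s.

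The computational core is a lower bound for the size-biased overlap. Writing $\nu_N(x):=\mathbf{P}_{N-1,\beta}^{\omega}(S_N=x)$, so that $I_N=\sum_x\nu_N(x)^2$, I would first record the identity $\mathbbm{E}[\hat{Z}_{N-1,\beta}^{\omega}\,\nu_N(x)]=\mathbf{P}(S_N=x)$, which follows by writing $\hat{Z}_{N-1,\beta}^{\omega}\nu_N(x)=e^{-(N-1)\lambda(\beta)}\mathbf{E}[e^{\beta\sum_{n=1}^{N-1}\omega_{n,S_n}}q(x-S_{N-1})]$ and integrating the environment along the (time-distinct) path. Since $\hat{Z}_{N-1,\beta}^{\omega}\geq0$ has $\mathbbm{P}$-mean one, Cauchy--Schwarz gives $\mathbf{P}(S_N=x)^2\leq\mathbbm{E}[\hat{Z}_{N-1,\beta}^{\omega}\nu_N(x)^2]$, and summing over $x$ yields the key inequality $\mathbbm{E}[\hat{Z}_{N-1,\beta}^{\omega} I_N]\geq\sum_x\mathbf{P}(S_N=x)^2=\mathbf{P}^{\otimes2}(S_N^1=S_N^2)=:p_N$. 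Now $p_N=\mathbf{P}(S_N^1-S_N^2=0)$ is the return probability of the difference walk $D=S^1-S^2$; $D$ is again in the domain of attraction of a symmetric $\alpha$-stable law with the same scaling order, so by the local limit theorem $p_N\asymp a_N^{-1}$, and recurrence of $S$ is exactly $\sum_N a_N^{-1}=\infty$. Hence $\sum_N\mathbbm{E}[\hat{Z}_{N-1,\beta}^{\omega} I_N]=\infty$ for every $\beta>0$.

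It remains to transfer this divergence from the size-biased mean to an almost-sure statement. Here I would use that throughout the weak-disorder phase the nonnegative martingale $\hat{Z}_{N,\beta}^{\omega}$ is uniformly integrable, so that the size-biased measures $\mathrm{d}\hat{\mathbbm{P}}_N=\hat{Z}_{N,\beta}^{\omega}\,\mathrm{d}\mathbbm{P}$ extend consistently to a probability measure $\hat{\mathbbm{P}}$ on $\mathcal{G}_\infty$ with $\mathrm{d}\hat{\mathbbm{P}}/\mathrm{d}\mathbbm{P}=\hat{Z}_{\infty,\beta}^{\omega}$; because $\hat{Z}_{\infty,\beta}^{\omega}>0$ $\mathbbm{P}$-a.s. in weak disorder, $\hat{\mathbbm{P}}$ and $\mathbbm{P}$ are mutually absolutely continuous. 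Since $I_N$ is $\mathcal{G}_{N-1}$-measurable, the key inequality reads $\hat{\mathbbm{E}}[I_N]=\mathbbm{E}[\hat{Z}_{N-1,\beta}^{\omega} I_N]\geq p_N$, so $\hat{\mathbbm{E}}[\sum_N I_N]=\infty$. To promote this to $\sum_N I_N=\infty$ $\hat{\mathbbm{P}}$-a.s. I would use the pathwise description of $\hat{\mathbbm{P}}$ through a tagged polymer trajectory, under which the two-replica overlap dominates an occupation/self-intersection functional of a recurrent walk and therefore diverges almost surely; mutual absolute continuity then gives $\sum_N I_N=\infty$ $\mathbbm{P}$-a.s., contradicting weak disorder and proving $\beta_{c}^{1}=0$.

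The main obstacle is precisely this last upgrade: a divergent expectation $\hat{\mathbbm{E}}[\sum_N I_N]=\infty$ does \emph{not} by itself force $\sum_N I_N=\infty$ almost surely, so the soft mean bound must be replaced by a genuinely pathwise recurrence argument for the overlap under the tilted dynamics. The two points that need care are (i) uniform integrability of $\hat{Z}_{N,\beta}^{\omega}$ on the whole weak-disorder phase (not merely on the $L^2$ region), which is what makes the limiting tilted measure available and equivalent to $\mathbbm{P}$, and (ii) the pathwise identification showing the replicas meet infinitely often under $\hat{\mathbbm{P}}$. I would also verify the local-limit input $p_N\asymp a_N^{-1}$ uniformly in the stable regime, including the marginal case $\alpha=1$, so that $\sum_N p_N=\infty$ is genuinely equivalent to recurrence of $S$ as described in Remark \ref{T102}.
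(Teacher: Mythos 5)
Your reduction via Theorem \ref{T103} and the size-biased first-moment computation are sound: the identity $\mathbbm{E}[\hat{Z}_{N-1,\beta}^{\omega}\nu_{N}(x)]=\mathbf{P}(S_{N}=x)$ holds by integrating the environment along the path, Cauchy--Schwarz then gives $\mathbbm{E}[\hat{Z}_{N-1,\beta}^{\omega}I_{N}]\geq\mathbf{P}^{\bigotimes2}(S_{N}^{1}=S_{N}^{2})\asymp a_{N}^{-1}$ by Gnedenko's local limit theorem, and recurrence is indeed equivalent to $\sum_{N}a_{N}^{-1}=\infty$ (this is exactly Proposition \ref{T301}(i) in the paper). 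But the proof breaks at precisely the step you flag, and in two distinct ways. First, the equivalence $\hat{\mathbbm{P}}\sim\mathbbm{P}$ requires uniform integrability of $\hat{Z}_{N,\beta}^{\omega}$, i.e.\ $\mathbbm{E}[\hat{Z}_{\infty,\beta}^{\omega}]=1$; weak disorder only says $\hat{Z}_{\infty,\beta}^{\omega}>0$ a.s., and there is no known soft argument (none in this paper, nor in \cite{comets2003directed} or \cite{comets2006directed}) upgrading positivity to $L^{1}$ convergence outside the $L^{2}$ region, so point (i) on your list is not a verification but an open obstruction. Second, even granting $\hat{\mathbbm{P}}$, the divergence $\hat{\mathbbm{E}}[\sum_{N}I_{N}]=\infty$ does not yield almost-sure divergence, and the proposed spine/tagged-trajectory domination is not substantiated: $I_{N}$ is a functional of the quenched polymer measure, not of the spine walk, and no pathwise comparison with an occupation or self-intersection functional of a recurrent walk is actually exhibited. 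The entire difficulty of the proposition is concentrated in this missing step.

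The paper avoids both obstacles by a fractional-moment recursion instead of a first-moment/size-bias argument. Writing $\hat{Z}_{N,\beta}^{\omega}/\hat{Z}_{N-1,\beta}^{\omega}=1+U_{N,\beta}^{\omega}$ with $U_{N,\beta}^{\omega}$ centered given $\mathcal{G}_{N-1}$, it applies Lemma \ref{T303} with weights $\alpha_{x}=\mathbf{P}_{N-1,\beta}^{\omega}(S_{N}=x)$, so that $\sum_{x}\alpha_{x}^{2}=I_{N}$, together with the elementary bound $1+\theta u-(1+u)^{\theta}\geq c_{1}u^{2}/(2+u)$, to get $\mathbbm{E}[(\hat{Z}_{N,\beta}^{\omega})^{\theta}\,|\,\mathcal{G}_{N-1}]\leq(\hat{Z}_{N-1,\beta}^{\omega})^{\theta}-c_{3}(\hat{Z}_{N-1,\beta}^{\omega})^{\theta}I_{N}$; Lemma \ref{T305} then converts the overlap term into $\frac{1}{|\Lambda_{N}|}\mathbbm{E}[(\hat{Z}_{N-1,\beta}^{\omega})^{\theta}]-\frac{2}{|\Lambda_{N}|}\mathbf{P}(S_{N}\notin\Lambda_{N})^{\theta}$. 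Choosing $\Lambda_{N}=(-b_{N},b_{N})$ with $b_{N}/a_{N}\to\infty$ and $\sum_{N}1/b_{N}=\infty$ (possible precisely because recurrence gives $\sum_{N}1/a_{N}=\infty$), iterating the resulting inequality \eqref{311} forces $\mathbbm{E}[(\hat{Z}_{\infty,\beta}^{\omega})^{\theta}]=0$ for every $\beta>0$, hence strong disorder --- unconditionally, with no appeal to uniform integrability, to the size-biased measure, or to almost-sure behavior of $\sum_{N}I_{N}$. To salvage your route you would need a genuine pathwise two-replica argument under the tilted law, plus uniform integrability throughout the weak disorder phase; as written, the proposal establishes only the (true but insufficient) statement in the mean.
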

\begin{remark}\label{T114}
As we have mentioned in Remark \ref{T102}, recurrence holds for $\alpha\in(1,2]$ and transience holds for $\alpha\in(0,1)$. For the critical case $\alpha=1$, $\beta_{c}^{1}$ can be either 0 or positive, which depends on the slowly varying function $L(\cdot)$.
\end{remark}
Theorem \ref{T111}\ (ii) will be proved by showing $p(\beta)<0$ for any $\beta>0$ if $\alpha\in(1,2]$. In fact, we can give an upper bound for the free energy that we believe to be sharp up to multiplication by a constant.
\begin{theorem}\label{T115}
If $\alpha\in(1,2]$, then there exists a slowly varying function $\varphi$, which can be expressed by $\alpha$ and $L(\cdot)$, an inverse temperature $\beta_{0}>0$ and a constant $C>0$ (all depend on $\alpha$ and on $L(\cdot)$), such that for $0<\beta\leq\beta_{0}$,
\begin{equation}\label{123}
p(\beta)<-C\beta^{\frac{2\alpha}{\alpha-1}}\varphi\left(\frac{1}{\beta}\right).
\end{equation}
\end{theorem}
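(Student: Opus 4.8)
The plan is to bound the free energy from above by a fractional moment of the partition function and then to force that fractional moment to decay exponentially at a carefully chosen scale, following the coarse-graining and change-of-measure scheme of Lacoin \cite{lacoin2010new}. Fix $\theta\in(0,1)$. Since $x\mapsto x^{\theta}$ is concave and $\hat{Z}_{N,\beta}^{\omega}$ has $\mathbbm{P}$-mean one, Jensen's inequality and \eqref{114} give, for every $N$,
\begin{equation*}
p(\beta)=\lim_{N\to\infty}\frac{1}{N}\mathbbm{E}[\log\hat{Z}_{N,\beta}^{\omega}]=\lim_{N\to\infty}\frac{1}{\theta N}\mathbbm{E}[\log(\hat{Z}_{N,\beta}^{\omega})^{\theta}]\leq\limsup_{N\to\infty}\frac{1}{\theta N}\log\mathbbm{E}[(\hat{Z}_{N,\beta}^{\omega})^{\theta}].
\end{equation*}
Hence it suffices to find a scale $m=m(\beta)$ and a constant $\rho<1$ with $\mathbbm{E}[(\hat{Z}_{N,\beta}^{\omega})^{\theta}]\leq\rho^{\,N/m}$ along multiples of $m$; this yields $p(\beta)\leq(\theta m)^{-1}\log\rho$, so that $p(\beta)$ is of order $-1/m$, and the whole problem reduces to identifying $m(\beta)$ and to the exponential decay estimate.

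The correct scale $m$ is the correlation length, which I would read off from the second moment. Writing $\lambda_{2}(\beta):=\lambda(2\beta)-2\lambda(\beta)$, the normalization $\mathbbm{E}[\omega]=0$, $\mathbbm{E}[\omega^{2}]=1$ forces $\lambda(\beta)=\tfrac{\beta^{2}}{2}+o(\beta^{2})$, whence $\lambda_{2}(\beta)\sim\beta^{2}$ as $\beta\to0$. Set $u_{n}:=\mathbf{P}(S_{n}^{1}=S_{n}^{2})=\mathbf{P}((S^{1}-S^{2})_{n}=0)$; the difference walk $S^{1}-S^{2}$ lies in the domain of attraction of a symmetric $\alpha$-stable law with the same norming $a_{n}=n^{1/\alpha}l(n)$, so the local limit theorem for stable laws gives $u_{n}\asymp a_{n}^{-1}=n^{-1/\alpha}/l(n)$. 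Since $-1/\alpha\in(-1,0)$ for $\alpha\in(1,2]$, Karamata's theorem makes $\Phi(m):=\sum_{n=1}^{m}u_{n}$ regularly varying of index $(\alpha-1)/\alpha$, i.e.\ $\Phi(m)\asymp m^{(\alpha-1)/\alpha}/l(m)$. I would then define $m(\beta)$ as the scale at which $\lambda_{2}(\beta)\Phi(m)$ is of order one; solving $\beta^{2}m^{(\alpha-1)/\alpha}/l(m)\asymp1$ yields $m(\beta)\asymp\beta^{-2\alpha/(\alpha-1)}\psi(1/\beta)$ for a slowly varying $\psi$ built from $\alpha$ and $L$. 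This is exactly where the exponent $2\alpha/(\alpha-1)$ and the slowly varying factor $\varphi=1/\psi$ of \eqref{123} arise, and at $\alpha=2$ it reproduces the $\beta^{4}$ behaviour of the nearest-neighbour model.

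For the decay estimate I would partition $[0,N]$ into $N/m$ time blocks of length $m$ and partition space into intervals of length $a_{m}$. Decomposing $Z_{N}$ over the space intervals occupied at the block-boundary times, using the Markov property together with $(\sum_{i}x_{i})^{\theta}\leq\sum_{i}x_{i}^{\theta}$, bounds $\mathbbm{E}[(\hat{Z}_{N,\beta}^{\omega})^{\theta}]$ by a sum over coarse-grained skeletons of products of single-block fractional moments with constrained endpoints. For one block I would pass to a tilted environment law $\mathbbm{Q}$ that lowers the environment mean by $\delta\asymp(m\,a_{m})^{-1/2}$ on the relevant space-time box, and apply Hölder with exponents $1/(1-\theta)$ and $1/\theta$ to split the block fractional moment into an entropic cost times a tilted mean $\mathbbm{E}_{\mathbbm{Q}}[\hat{Z}_{m,\beta}^{\omega}]$. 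With this $\delta$ the cost stays bounded (its logarithm is of order $\delta^{2}m\,a_{m}\asymp1$), while at the correlation length the penalization is $\beta\delta m\asymp\beta(m/a_{m})^{1/2}\asymp\beta\,\Phi(m)^{1/2}\asymp1$, which pushes the tilted mean below one on the event that the walk stays in the box; tuning constants makes the single-block contribution a fixed $\rho<1$.

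The main obstacle, and the essential departure from the nearest-neighbour case, is the heavy tail \eqref{101}: the walk can leave a space box in a single large jump, so the skeleton sum is not confined to neighbouring intervals but ranges over all of them, weighted by jump probabilities of order (distance)$^{-\alpha}$. One must therefore show that the induced spatial branching factor, multiplied by the per-block contribution $\rho$, stays strictly below one, and that the change of measure simultaneously tames the diagonal returns (which fix the correlation length) and the big-jump excursions; for a general environment satisfying only \eqref{104}, the downward tilt also has to be implemented with bounded cost rather than by a Gaussian shift. The remaining work, namely propagating the slowly varying functions $L$ and $l$ through the Karamata and Potter estimates to pin down $\varphi$ and confirm it is slowly varying, is delicate but routine bookkeeping.
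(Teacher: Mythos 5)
Your proposal reproduces the paper's route essentially step for step: the same fractional-moment reduction via Jensen's inequality, the same coarse graining into time blocks of length $n=n(\beta)$ and space intervals of length $a_{n}$, the same Gaussian downward tilt of size $\delta(n)\asymp(na_{n})^{-1/2}$ implemented through H\"older's inequality with exponents $\frac{1}{1-\theta}$ and $\frac{1}{\theta}$, and the same calibration of the block length. Indeed your condition $\lambda_{2}(\beta)\Phi(m)\asymp1$ with $\Phi(m)=\sum_{k\leq m}\mathbf{P}(S_{k}^{1}=S_{k}^{2})\asymp m/a_{m}$ (Karamata) is identical to the paper's choice $n\beta\delta(n)=\beta\sqrt{n/(C_{1}a_{n})}\asymp1$ in \eqref{333}, and your slowly varying $\varphi=1/\psi$ is produced rigorously in the paper via the de Bruijn conjugate, \cite[Theorem 1.5.13]{bingham1989regular}. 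The general-environment issue you raise is likewise not reworked in the paper; it is delegated to \cite[Page 481]{lacoin2010new}, so proving only the Gaussian case is consistent with the paper's treatment.

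The one substantive step you leave open --- beating the spatial branching factor of the skeleton sum created by the heavy tails --- is precisely where the paper's argument is short, and your suggestion that the change of measure must ``simultaneously tame the diagonal returns and the big-jump excursions'' points in the wrong direction: the big jumps are killed by the fractional exponent alone, with no modification of the tilt. Since $\alpha>1$ one may fix $\gamma\in(1,\alpha)$ and then $\theta\in(1/\gamma,1)$; the uniform moment bound $\sup_{n}\mathbf{E}\left|S_{n}/a_{n}\right|^{\gamma}<\infty$ (the paper cites \cite[Theorem 2.14]{jonsson2007almost}) together with Markov's inequality gives $\max_{x\in I_{0}}\mathbf{P}^{x}(S_{n}\in I_{y})\leq Cy^{-\gamma}$, so the tail of the skeleton sum is at most $C'\sum_{y\geq K-2}y^{-\gamma\theta}$, which is finite and small for $K$ large, \emph{uniformly in} $n$; the remaining $2K$ central terms are each bounded by $\exp(-n\beta\delta(n))$ plus the probability of exiting a slightly shrunken box $\bar{J}$, made small by taking $C_{2}$ (hence the penalization) and the box-width constant $C_{1}$ large, as in \eqref{330}--\eqref{332}. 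Adding the entropic term $\frac{\theta}{1-\theta}$ from the tilt, the per-block bracket in \eqref{328} drops below $-1$ and $p(\beta)\leq-\frac{1}{\theta n}$ follows. Note that this is exactly where $\alpha>1$ is indispensable: for $\alpha\leq1$ no pair $\gamma<\alpha$, $\theta<1$ with $\gamma\theta>1$ exists, so the skeleton sum cannot be closed this way (compare Remark \ref{T307}). With this summability lemma supplied, your plan becomes the paper's proof.
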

\begin{remark}\label{T116}
It is conjectured that the asymptotic behavior of the free energy of long-range directed polymer is $p(\beta)\sim-\textbf{F}\beta^{\frac{2\alpha}{\alpha-1}}\varphi(\frac{1}{\beta})$, where $\textbf{F}$ is the free energy of a continuum model and $\varphi$ is some function slowly varying at infinity, although the existence of $\textbf{F}$ is still an open question. For more information, see \cite[Conjecture 3.5, Conjecture 3.11]{caravenna2015polynomial}, where the authors consider the scaling limits of disordered systems, including the long-range directed polymer. Although in that paper the slowly varying function is ignored in long-range directed polymer models, it can be easily included as done in the conjecture on the critical curve of the pinning models. It is also natural to conjecture that for $\alpha=1$, $\beta_{c}^{1}=0$ can imply $\beta_{c}^{2}=0$, which we are now trying to prove. However, there is still some technical difficulty to deal with the critical case. We will shortly discuss that in Remark \ref{T307} after the proof of Theorem \ref{T115}.
\end{remark}

Our next result concerns the phenomenon of localization in the very strong disorder regime. A strong result for localization was given by Vargas, in \cite[Theorem 3.6]{vargas2007strong}, who considered $\epsilon$-atoms of polymer measure $\mathbf{P}_{N,\beta}^{\omega}$ for the nearest-neighbor directed polymer. By some modifications in the proof of his key lemma \cite[Lemma 5.3]{vargas2007strong}, we can extend his result to the long-range model.
\begin{theorem}\label{T117}
Denote
\begin{equation}\label{124}
\mathcal{A}_{N,\beta}^{\epsilon,\omega}=\{x\in\mathbbm{Z}:\mathbf{P}_{N-1,\beta}^{\omega}(S_{N}=x)>\epsilon\}.
\end{equation}
If $p(\beta)<0$, i.e., very strong disorder holds, then for $\mathbbm{P}$-a.s., there exists an $\epsilon>0$, such that
\begin{equation}\label{125}
\varliminf\limits_{N\to\infty}\frac{1}{N}\sum\limits_{n=1}^{N}\mathbf{1}_{\mathcal{A}_{n,\beta}^{\epsilon,\omega}\neq\emptyset}>0.
\end{equation}
\end{theorem}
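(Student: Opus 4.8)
The plan is to show that the \emph{existence of $\epsilon$-atoms is guaranteed by the very strong disorder condition $p(\beta)<0$}, by relating the atom-counting quantity in \eqref{125} to the overlap quantity $I_{N}$ from Theorem \ref{T103}, and then invoking the exponential decay of $\hat{Z}_{N,\beta}^{\omega}$. The central observation is that $I_{N}=(\mathbf{P}_{N-1,\beta}^{\omega})^{\bigotimes 2}(S_{N}^{1}=S_{N}^{2})=\sum_{x\in\mathbbm{Z}}\mathbf{P}_{N-1,\beta}^{\omega}(S_{N}=x)^{2}$ is precisely the $\ell^{2}$-mass of the end-point distribution at time $N$, and this quantity is large exactly when the mass concentrates on a few sites, i.e.\ when atoms are present. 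So the first step is to record the identity $I_{N}=\sum_{x}\mu_{N}(x)^{2}$, where I abbreviate $\mu_{N}(x):=\mathbf{P}_{N-1,\beta}^{\omega}(S_{N}=x)$.

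The key elementary inequality I would use connects $I_{N}$ to the event $\{\mathcal{A}_{N,\beta}^{\epsilon,\omega}\neq\emptyset\}$. On the one hand, $I_{N}=\sum_{x}\mu_{N}(x)^{2}\leq\big(\max_{x}\mu_{N}(x)\big)\cdot\sum_{x}\mu_{N}(x)=\max_{x}\mu_{N}(x)$, so $\max_{x}\mu_{N}(x)\geq I_{N}$. Hence if $I_{N}>\epsilon$ then there is a site $x$ with $\mu_{N}(x)>\epsilon$, which means $\mathcal{A}_{N,\beta}^{\epsilon,\omega}\neq\emptyset$; that is, $\mathbf{1}_{\{I_{N}>\epsilon\}}\leq\mathbf{1}_{\{\mathcal{A}_{N,\beta}^{\epsilon,\omega}\neq\emptyset\}}$. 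Therefore the Cesàro average in \eqref{125} is bounded below by $\varliminf_{N}\frac{1}{N}\sum_{n=1}^{N}\mathbf{1}_{\{I_{n}>\epsilon\}}$, and it suffices to prove that for some $\epsilon>0$ this latter density is strictly positive almost surely. The second step is therefore to show that, under $p(\beta)<0$, the overlaps $I_{n}$ exceed a fixed threshold $\epsilon$ for a positive fraction of times $n$.

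For this I would combine the lower bound in \eqref{111}, namely $\sum_{n=1}^{N}I_{n}\geq -c_{1}\log\hat{Z}_{N,\beta}^{\omega}$, with the very strong disorder hypothesis $\frac{1}{N}\log\hat{Z}_{N,\beta}^{\omega}\to p(\beta)<0$. These together give $\varliminf_{N}\frac{1}{N}\sum_{n=1}^{N}I_{n}\geq -c_{1}\,p(\beta)>0$, so the averaged overlap has a strictly positive lower density, call it $\delta:=-c_{1}p(\beta)>0$. The remaining task is to convert a positive lower bound on the \emph{average} of $I_{n}$ into a positive lower bound on the \emph{density of times $n$ for which $I_{n}>\epsilon$}; this is a routine Markov-type splitting. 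Since each $I_{n}\in[0,1]$, I split the sum according to whether $I_{n}>\epsilon$ or $I_{n}\leq\epsilon$ to get $\frac{1}{N}\sum_{n=1}^{N}I_{n}\leq\epsilon+\frac{1}{N}\sum_{n=1}^{N}\mathbf{1}_{\{I_{n}>\epsilon\}}$, whence $\frac{1}{N}\sum_{n=1}^{N}\mathbf{1}_{\{I_{n}>\epsilon\}}\geq\frac{1}{N}\sum_{n=1}^{N}I_{n}-\epsilon$. Choosing $\epsilon:=\delta/2$ and passing to the $\varliminf$ yields $\varliminf_{N}\frac{1}{N}\sum_{n=1}^{N}\mathbf{1}_{\{I_{n}>\epsilon\}}\geq\delta-\epsilon=\delta/2>0$, which completes the chain of inequalities and proves \eqref{125}.

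I expect the main obstacle to be purely bookkeeping rather than conceptual: one must be careful that the almost-sure constants $c_{1},c_{2}$ in Theorem \ref{T103} exist only on the very-strong-disorder (indeed strong-disorder) event, and that $p(\beta)<0$ is exactly what upgrades the qualitative bound \eqref{111} into a quantitative positive density. The only genuinely long-range-specific point to verify is that the identity $I_{N}=\sum_{x}\mu_{N}(x)^{2}$ and the bound $I_{N}\leq\max_{x}\mu_{N}(x)$ hold verbatim for heavy-tailed walks on $\mathbbm{Z}$, which they do since these are measure-theoretic facts independent of the step distribution; no regularity of the slowly varying function $L(\cdot)$ or of the scaling $a_{N}$ enters. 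Thus the proof reduces cleanly to Theorem \ref{T103} together with the definition of the very strong disorder regime.
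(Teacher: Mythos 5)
Your proposal is correct, but it takes a genuinely different route from the paper's. The paper follows Vargas's scheme: it writes $\frac{1}{N}\log Z_{N,\beta}^{\omega}$ as a telescoping sum of increments $\log(Z_{j,\beta}^{\omega}/Z_{j-1,\beta}^{\omega})$, splits according to whether $\sup_{x}\mathbf{P}_{j-1,\beta}^{\omega}(S_{j}=x)>\epsilon$, controls the small-atom part via Lemma \ref{T401} (the countable-weight extension of Vargas's key lemma, which is the genuinely new ingredient for the long-range model), kills the martingale parts $M_{N}/N$, $L_{N}/N$ by the Hall--Heyde law of large numbers (Theorem \ref{T402}), and concludes from $F(\beta)<\lambda(\beta)$, obtaining in fact a \emph{deterministic} $\epsilon$ (any $\epsilon$ with $\mathbbm{E}[\log(\epsilon\sum_{i=1}^{1/\epsilon}e^{\beta\omega_{i,0}})]>F(\beta)$). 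You instead reduce everything to the localization estimate \eqref{111} of Theorem \ref{T103}: the identity $I_{n}=\sum_{x}\mathbf{P}_{n-1,\beta}^{\omega}(S_{n}=x)^{2}$, the bound $I_{n}\leq\max_{x}\mathbf{P}_{n-1,\beta}^{\omega}(S_{n}=x)$ giving $\mathbf{1}_{\{I_{n}>\epsilon\}}\leq\mathbf{1}_{\{\mathcal{A}_{n,\beta}^{\epsilon,\omega}\neq\emptyset\}}$, the lower bound $\sum_{n\leq N}I_{n}\geq-c_{1}\log\hat{Z}_{N,\beta}^{\omega}$ combined with $\frac{1}{N}\log\hat{Z}_{N,\beta}^{\omega}\to p(\beta)<0$, and the Markov-type splitting with $\epsilon=\delta/2$; every step checks out, and since the statement of Theorem \ref{T117} permits an $\omega$-dependent $\epsilon$, the fact that your $\epsilon$ depends on the a.s.\ constant $c_{1}$ is harmless (in the Comets--Shiga--Yoshida original the constants are deterministic anyway, so this dependence can even be removed). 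This is essentially the Comets--Shiga--Yoshida route to positive density of large overlaps, and it is much shorter. What the paper's longer route buys is weaker hypotheses: the quantitative bound \eqref{111} rests, in the underlying literature, on second-moment/concentration estimates of the type of Lemma \ref{T303}, whose condition $\mathbbm{E}[\xi_{1}^{3}+\log^{2}\xi_{1}]<\infty$ with $\xi=e^{\beta\omega-\lambda(\beta)}$ amounts to requiring $\lambda(3\beta)<\infty$, which does not follow from \eqref{104} for all $\beta$ in the very strong disorder regime; Vargas's argument, as reproduced in Section 4, needs only $\lambda(\beta)<\infty$ and $\mathbbm{E}|\omega_{1,0}|^{\theta}<\infty$ for some $\theta>1$. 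So your proof is a valid and more economical derivation whenever \eqref{111} is available (in particular it is fine as a citation of Theorem \ref{T103} as stated), while the paper's argument covers the full range of environments and temperatures allowed by \eqref{104} and yields a deterministic atom size.
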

This theorem says that in the very strong disorder regime, as the random walk moves in the random environment, there will be atoms carrying mass bigger than $\epsilon$ in the polymer's end-point distribution under $\mathbf{P}_{N,\beta}^{\omega}$ for arbitrarily large $N$.

Our last result concerns the fluctuation of the polymer in the very strong disorder regime. It has been shown in \cite{bezerra2008superdiffusivity, lacoin2011influence} that under the polymer measure, a Brownian polymer $B_{t}$ in a continuous Gaussian field will fluctuate on a scale which is not less than $t^{\frac{3}{5}}$ as $t\to\infty$, if the Gaussian field has weak coorelation. Since $t^{\frac{3}{5}}$ is larger than the underlying scale $t^{\frac{1}{2}}$, it reflects a superdiffusive phenomenon. By adapting the methods in \cite{bezerra2008superdiffusivity, lacoin2011influence}, we can establish a similar result for the long-range model. For some technical reason, we will consider a family of heavy-tailed random walks with more regular tails in a Gaussian random environment. We show that for any stable exponent $\alpha\in(1,2]$, the random walk fluctuates on a scale $\gg N^{\frac{1}{\alpha}}$ under $\mathbf{P}_{N,\beta}^{\omega}$ as $N\to\infty$. In such case, we say that the random walk has a super-$\alpha$-stable motion.
\begin{theorem}\label{T118}
Let $(X_{n})_{n\in\mathbbm{N}}$ be a sequence of i.i.d. integer-valued random variables with symmetric distribution
\begin{equation}\label{126}
\mathbf{P}(X_{1}=k)=\begin{cases}
\frac{L(|k|)}{|k|^{\alpha+1}},~&\forall k\in\mathbbm{Z}\setminus\{0\},\\
p_{0}>0,~&\mbox{for}~k=0,
\end{cases}
\end{equation}
where $L(\cdot):(0,\infty)\to(0,\infty)$ is some slowly varying function and $\alpha$ is some constant strictly larger than 1 (not necessarily less than 2). We denote the heavy-tailed random walk by
\begin{equation}\label{127}
S_{N}=\sum\limits_{n=1}^{N}X_{n}.
\end{equation}
The random environment $\omega:=(\omega_{i,x})_{(i,x)\in\mathbbm{N}\times\mathbbm{Z}}$ is a family of i.i.d. standard Gaussian random variables and we define the related polymer measure as in \eqref{105} and \eqref{106}. Then given $\alpha$ in \eqref{126} and $\beta>0$, for any arbitrarily small $\epsilon>0$, we have
\begin{equation}\label{128}
\lim\limits_{N\to\infty}\mathbbm{E}\left[\mathbf{P}_{N,\beta}^{\omega}\left(\max\limits_{1\leq n\leq N}|S_{n}|\geq\frac{\beta^{2}N}
{4(\alpha+1+\epsilon)^{2}(\log N)^{2}}\right)\right]=1.
\end{equation}
\end{theorem}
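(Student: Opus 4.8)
The plan is to prove the complementary statement $\mathbbm{E}[\mathbf{P}_{N,\beta}^{\omega}(\max_{1\le n\le N}|S_{n}|<R_{N})]\to 0$, where $R_{N}=\frac{\beta^{2}N}{4(\alpha+1+\epsilon)^{2}(\log N)^{2}}$. Writing $Z_{N}^{<R}$ for the partition function restricted to paths confined to the tube $\{|x|<R_{N}\}$, this probability is the ratio $Z_{N}^{<R}/Z_{N,\beta}^{\omega}$. The first step is to lower-bound the denominator by the contribution $Z_{N}^{\mathrm{exit}}$ of a carefully chosen family of paths that leave the tube through a single large jump into a fresh region of the field and then run an independent polymer there. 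Because the Gaussian variables $(\omega_{n,x})$ are independent across disjoint space regions, one can arrange $Z_{N}^{\mathrm{exit}}$ to depend only on the environment outside the tube (and in well-separated far columns), so that $Z_{N}^{\mathrm{exit}}$ is independent of $Z_{N}^{<R}$ and $Z_{N,\beta}^{\omega}\ge Z_{N}^{<R}+Z_{N}^{\mathrm{exit}}$. This gives $\mathbf{P}_{N,\beta}^{\omega}(\max<R_{N})\le(1+Z_{N}^{\mathrm{exit}}/Z_{N}^{<R})^{-1}$, and since the right-hand side is bounded by $1$, it suffices by bounded convergence to prove that $Z_{N}^{\mathrm{exit}}/Z_{N}^{<R}\to\infty$ in $\mathbbm{P}$-probability.

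The heart of the argument is then to estimate this ratio. Writing $q(x)=\mathbf{P}(S_{1}=x)\sim L(|x|)|x|^{-(\alpha+1)}$, a single jump across distance $R_{N}$ incurs an entropic penalty of order $(\alpha+1)\log R_{N}\sim(\alpha+1)\log N$ in the exponent, which is the price the heavy tail charges for a macroscopic excursion. Against this one must set the energetic gain available in the fresh exterior field: over the remaining time the exit paths may localize in the most favourable of the many ($\approx N R_{N}$) independent far columns, and the Gaussian structure (with $\lambda(\beta)=\beta^{2}/2$) makes this gain explicitly computable through extreme-value asymptotics, the maximum of $m$ i.i.d. standard Gaussians being $\sim\sqrt{2\log m}$. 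Balancing the $\sqrt{\log N}$-scale gain, amplified by $\beta$ and by the number of independent attempts, against the $(\alpha+1)\log N$ jump cost is what fixes the critical displacement at the scale $\frac{\beta^{2}N}{(\alpha+1)^{2}(\log N)^{2}}$, with the factor $4$ and the slack $\epsilon$ absorbing lower-order and slowly varying corrections.

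To turn this heuristic into a bound on the ratio I would control both numerator and denominator by Gaussian concentration: the log of any such (sub-)partition function is Lipschitz in $\omega$ with constant $\le\beta\sqrt{N}$, so the Borell--TIS and Gaussian Poincar\'e inequalities pin each log near its mean, while a first- and second-moment computation on $Z_{N}^{\mathrm{exit}}$ quantifies the boost coming from optimizing over the independent far columns. Combining the concentration of $\log Z_{N}^{<R}$ from above with the lower bound on $Z_{N}^{\mathrm{exit}}$ yields $Z_{N}^{\mathrm{exit}}/Z_{N}^{<R}\to\infty$ in probability for the stated $R_{N}$, which closes the argument. This moment-plus-concentration scheme is precisely the adaptation of the Gaussian methods of \cite{bezerra2008superdiffusivity, lacoin2011influence}, together with the localization estimate behind \cite[Lemma 5.3]{vargas2007strong}, alluded to in the statement.

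The step I expect to be the main obstacle is the quantitative gain estimate, namely showing rigorously that among the independent far columns there exists, with probability tending to one, one whose fresh-environment polymer exceeds the confined partition function by more than the heavy-tail jump penalty. This requires controlling the upper tail and order statistics of the far partition functions together with their near-independence after the separation into disjoint regions, and it is here that the explicit Gaussian moment generating function and the concentration inequalities must be used most carefully; the logarithmic corrections in $R_{N}$ reflect exactly the slack one must leave to make these estimates uniform over the $\approx N R_{N}$ space-time locations.
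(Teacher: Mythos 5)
Your skeleton is genuinely close to the paper's: you pass to the complementary confined event, bound it by a ratio of a confined partition function to the contribution of paths exiting through a single big jump into fresh territory, price that jump at $(\alpha+1+\delta)\log N$ in the exponent via the heavy tail (this is exactly the paper's Lemma \ref{T502}, proved with Potter bounds), and compare the central tube against far blocks. The genuine gap is in the step you yourself flag as the main obstacle: the quantitative energy-gain estimate, and as proposed it would fail. First, your balance does not produce the stated scale: the extreme-value gain $\beta\sqrt{2\log m}$ with $m\approx NR_{N}$ is $O(\beta\sqrt{\log N})$, negligible against the jump cost $(\alpha+1)\log N$; equating those two quantities yields no $N$-dependent displacement at all. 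The scale $\frac{\beta^{2}N}{4(\alpha+1+\epsilon)^{2}(\log N)^{2}}$ comes from a different balance, $\beta\sqrt{N/(2|J_{N}|)}\asymp(\alpha+1+\epsilon)\log N$: the response of the \emph{confined} log-partition function to a uniform mean shift of the field on the whole space-time tube (a path makes $\frac{N}{2}$ visits, while a unit $L^{2}$-budget spread over the $\frac{N}{2}|J_{N}|$ sites gives $(\frac{N}{2}|J_{N}|)^{-1/2}$ per site), not the maximum of individual Gaussians — indeed, literally localizing in the single most favourable column costs entropy $\log(1/p_{0})$ per step, linear in $N$. Second, your tools point the wrong way: Borell--TIS or Poincar\'e with Lipschitz constant $\beta\sqrt{N}$ pin $\log Z$ only to within $O(\sqrt{N})$, hopelessly coarse against the required $\log N$ precision, and concentration inequalities by nature bound fluctuations from \emph{above}, whereas you need an anti-concentration (upper-tail \emph{lower}) bound showing some far block beats the centre by $(\alpha+1+\epsilon)\log N$. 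A first/second-moment (Paley--Zygmund) computation on $Z_{N}^{\mathrm{exit}}$ also fails: $\mathbbm{E}[Z^{2}]/(\mathbbm{E}Z)^{2}$ grows exponentially in $N$ in the very strong disorder regime, which by Theorem \ref{T115} holds for every $\beta>0$ when $\alpha\in(1,2]$.

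The paper circumvents precisely these difficulties with two soft devices you could substitute for your concentration-plus-extreme-values step. (1) A Lacoin-style change of measure: tilt the environment on $[\frac{N}{2}+1,N]\times J_{N}$ down by $(\frac{N}{2}|J_{N}|)^{-1/2}$ per site; the total $L^{2}$-cost is $1$, so probabilities change by at most a $\sqrt{e}$ Cauchy--Schwarz factor, yet the confined log-partition function drops \emph{deterministically} by $\beta\frac{N}{2}(\frac{N}{2}|J_{N}|)^{-1/2}\sim(\alpha+1+\epsilon)\log N$ — this replaces any fluctuation lower bound. (2) Exchangeability instead of extreme-value asymptotics: the $2M+1$ environment-shifted block partition functions $\overline{Z}_{N,\beta}^{\omega}(k)$ (blocks occupied only over the second half of time, so the exit path need only shift one increment at time $\frac{N}{2}+1$) are exchangeable, hence the central one is maximal with probability exactly $\frac{1}{2M+1}$; combined with the single-jump Radon--Nikodym bound $C(|k|N)^{-(\alpha+1+\delta)}$ this gives $\sum_{k\neq0}Z_{N,\beta}^{\omega}(k)\geq C(MN)^{-(\alpha+1+\frac{\epsilon}{2})}Z_{N,\beta}^{\omega}(0)$ with probability at least $1-\frac{1}{2M}$ (Proposition \ref{T501}), and the tilt's factor $N^{-(\alpha+1+\epsilon)}$ then beats this polynomial. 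In short: right architecture and correct identification of the bottleneck, but the proposed resolution of the bottleneck — order statistics of site Gaussians certified by concentration and second moments — cannot close it.
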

\begin{remark}\label{T119}
The condition \eqref{126} is a bit stronger than \eqref{101} or \eqref{102}, since by \cite[Proposition 1.5.8]{bingham1989regular}, \eqref{126} implies that for $\alpha\in(1,2), X_{1}$ is in the domain of attraction of the stable law with stable exponent $\alpha$, and for $\alpha\geq2$, $X_{1}$ is in the domain of attraction of the Gaussian law.
\end{remark}
In summary, in this paper, we draw a more detailed phase diagram for the long-range directed polymer model, and we extend the invariance principle in the weak disorder regime and a localization result in the very strong disorder regime from the nearest-neighbor directed polymer model to long-range directed polymer model. We also provide an upper bound for free energy of the model and a lower bound for the fluctuation scale for $\alpha\in(1,2]$. We hope that our results lay the foundation for further investigations of the long-range directed polymer model.
\subsection{Organization and strategy of the proof}
In Section 2, we will prove Theorem \ref{T109}. The procedure is the same as that in the proof of \cite[Theorem 5.1]{comets2006directed} for the nearest-neighbor model. The difference is that we need to do some estimates for heavy-tailed random walks instead of the simple random walk. We will apply some technical lemmas from \cite{comets2006directed} without stating their proof. Those lemmas can be extended to the long-range directed polymer model after careful checking.

In Section 3, we will prove Proposition \ref{T113} and Theorem \ref{T115}. For Proposition \ref{T113}, we will adapt the method used in the proof of \cite[Proposition 2.4(b)]{comets2003directed}. We will also give some equivalent criterion for the recurrence of long-range random walks. For Theorem \ref{T115}, we will use the now standard fractional moment/coarse graining/change of measure method, developed in the pinning model literature, used in \cite{lacoin2010new}.

In Section 4, we will prove Theorem \ref{T117}, which is based on the techniques developed by Vargas in \cite{vargas2007strong}.

In section 5, we will prove Theorem \ref{T118}. The methods that we will use were developed in \cite{bezerra2008superdiffusivity, lacoin2011influence}. Instead of computing the covariance of the random environment as that in \cite{bezerra2008superdiffusivity}, we will apply the change of measure method as that in \cite{lacoin2011influence}, which is also used in the proof of Theorem \ref{T115}.

Each section is independent and can be read separately.
\begin{remark}\label{T120}
One main difficulty in extending results for the nearest-neighbor model to the long-range ones is that, up to time $N$, the simple random walk can only reach at most $(2d)^{N}$ sites on $\mathbbm{Z}^{d}$, but the heavy-tailed random walk can reach infinitely many sites in one step. As we will see, the method in \cite{comets2006majorizing}, or the greedy lattice animal argument in \cite[Section 3.1]{vargas2007strong} cannot be directly applied to the long-range model.
\end{remark}
\section{Proof of Theorem \ref{T109}}
In this section, we will always assume that weak disorder holds, i.e. $\mathbbm{P}(\hat{Z}_{\infty,\beta}^{\omega}>0)=1$. According to the definition of the polymer measure $\mathbf{P}_{N,\beta}^{\omega}$, we perform change of measure for $\mathbf{P}$ at time $N$ with respect to the first $N$ steps of the random walk $S$. First we introduce the notation
\begin{equation}\label{201}
\hat{Z}_{N,\beta}^{\omega}(i,x):=\mathbf{E}^{x}\left[\exp\left(\sum\limits_{n=1}^{N}(\beta\omega_{n+i,S_{n}}-\lambda(\beta))\right)\right],
\end{equation}
where $\mathbf{E}^{x}[\cdot]$ denotes the expectation with respect to $\mathbf{P}^{x}:=\mathbf{P}(\cdot|S_{0}=x)$, the probability measure for the random walk starting at $x$. Then it is not hard to observe that given $\beta$ and $\omega$, $\mathbf{P}_{N,\beta}^{\omega}$ is an inhomogeneous Markov chain and the transition probabilities are given by
\begin{equation}\label{202}
\begin{split}
&\mathbf{P}_{N,\beta}^{\omega}(S_{i+1}=y|S_{i}=x)=\\
&\begin{cases}
&\frac{\exp(\beta\omega_{i+1,y}-\lambda(\beta))\hat{Z}_{N-i-1,\beta}^{\omega}(i+1,y)}{\hat{Z}_{N-i,\beta}^{\omega}(i,x)}
\mathbf{P}(S_{1}=y|S_{0}=x),~~\mbox{for}~0\leq i\leq N-1,\\
&\mathbf{P}(S_{1}=y|S_{0}=x),~~\mbox{for}~i\geq N.
\end{cases}
\end{split}
\end{equation}
Moreover, we can rewrite
\begin{equation}\label{203}
\hat{Z}_{N,\beta}^{\omega}(0,x)=\mathbf{E}^{x}[\exp(\beta\omega_{1,S_{1}}-\lambda(\beta))\hat{Z}_{N-1,\beta}^{\omega}(1,S_{1})].
\end{equation}
It can be seen that
\begin{equation}\label{204}
\hat{Z}_{\infty,\beta}^{\omega}(0,x):=\lim\limits_{N\to\infty}\hat{Z}_{N,\beta}^{\omega}(0,x)\geq
\mathbf{E}^{x}[\exp(\beta\omega_{1,S_{1}}-\lambda(\beta))\hat{Z}_{\infty,\beta}^{\omega}(1,S_{1})],~~\mathbbm{P}\mbox{-a.s.},
\end{equation}
where the first limit exists by martingale convergence theorem, and the inequality is due to Fatou's lemma. Notice that $(\hat{Z}_{\infty,\beta}^{\omega}(i,x))_{i\geq0, x\in\mathbbm{Z}}$ are identically distributed since $\omega$ is i.i.d., and $\omega_{1,S_{1}}$ is independent of $\hat{Z}_{\infty,\beta}^{\omega}(1,0)$. Hence, by taking expectation on both side of \eqref{204} and switch the order of $\mathbbm{E}$ and $\mathbf{E}^{x}$, we have
\begin{equation}\label{205}
\mathbbm{E}[\hat{Z}_{\infty,\beta}^{\omega}(0,x)]\geq\mathbbm{E}\left[\mathbf{E}^{x}[\exp(\beta\omega_{1,S_{1}}-\lambda(\beta))\hat{Z}_{\infty,\beta}^{\omega}(1,S_{1})
]\right]=\mathbbm{E}[\hat{Z}_{\infty,\beta}^{\omega}(1,x)].
\end{equation}
By the argument from \eqref{204} to \eqref{205}, notice that $\hat{Z}_{\infty,\beta}^{\omega}(0,x)$ and $\hat{Z}_{\infty,\beta}^{\omega}(1,x)$ have the same distribution, and then it follows that
\begin{equation}\label{206}
\hat{Z}_{\infty,\beta}^{\omega}(0,x)=\mathbf{E}^{x}[\exp(\beta\omega_{1,S_{1}}-\lambda(\beta))\hat{Z}_{\infty,\beta}^{\omega}(1,S_{1})],
~~\mathbbm{P}\mbox{-a.s.}
\end{equation}
Next, for all $A\in\mathcal{F}_{\infty}=\sigma\left(\bigcup\limits_{N=1}^{\infty}\mathcal{F}_{N}\right)$, where $\mathcal{F}_{N}$ is the $\sigma$-field generated by the first $N$ steps of the random walk $S$, the limit
\begin{equation}\label{207}
\mathbf{P}_{\infty,\beta}^{\omega}(A):=\lim\limits_{N\to\infty}\mathbf{P}_{N,\beta}^{\omega}(A)=\lim\limits_{N\to\infty}
\frac{\mathbf{E}\left[\mathbbm{1}_{A}\exp\left(\sum\limits_{n=1}^{N}\beta\omega_{n,S_{n}}-N\lambda(\beta)\right)\right]}{\hat{Z}_{N,\beta}^{\omega}}
\end{equation}
exists $\mathbbm{P}$-a.s by applying martingale convergence theorem to both the numerator and the denominator and the positivity of $\hat{Z}_{\infty,\beta}^{\omega}$.

Motivated by the argument above, we can define a random, inhomogeneous Markov chain with transition probabilities
\begin{equation}\label{208}
\mathbf{P}_{\beta,\mbox{mc}}^{\omega}(S_{i+1}=y|S_{i}=x)=\frac{\exp(\beta\omega_{i+1,y}-\lambda(\beta))\hat{Z}_{\infty,\beta}^{\omega}(i+1,y)}
{\hat{Z}_{\infty,\beta}^{\omega}(i,x)}\mathbf{P}(S_{1}=y|S_{0}=x).
\end{equation}
Note that \eqref{208} is obtained by taking limits in both numerator and denominator in \eqref{202}, which is well-defined by \eqref{206}. The reason we define $\mathbf{P}_{\beta,\mbox{mc}}^{\omega}$ is that $\mathbf{P}_{\infty,\beta}^{\omega}$ is not known to be countably additive on $\mathcal{F}_{\infty}$, while $\mathbf{P}_{\beta,\mbox{mc}}^{\omega}$ is indeed a probability measure on $\mathcal{F}_{\infty}$ and coincides with $\mathbf{P}_{\infty,\beta}^{\omega}$ on $\bigcup\limits_{n=1}^{\infty}\mathcal{F}_{n}$. The probability measure $\mathbf{P}_{\beta,\mbox{mc}}^{\omega}$ will play an important role in the proof of Theorem \ref{T109}.

We cite the following results from \cite{comets2006directed}, which we do not prove and will be used in our proof.
\subsection{Useful preliminary result}
\begin{proposition}{\cite[Proposition 4.1]{comets2006directed}}\label{T201}
Assume weak disorder.
\begin{equation}\label{209}
\mathbf{P}_{\beta,\rm{mc}}^{\omega}(A)=\mathbf{P}_{\infty}^{\beta,\omega}(A),~~\mathbbm{P}\mbox{-a.s. for all}~A\in\bigcup\limits_{N=1}^{\infty}\mathcal{F}_{N}.
\end{equation}
Moreover,
\begin{align}
&\mathbbm{E}\mathbf{P}_{\beta,\rm{mc}}^{\omega}(A)=\mathbbm{E}\mathbf{P}_{\infty,\beta}^{\omega}(A)~~\forall A\in\mathcal{F}_{\infty},\label{210}\\
&\mathbf{P}\ll\mathbbm{E}\mathbf{P}_{\beta,\rm{mc}}^{\omega}\ll\mathbf{P}~~\mbox{on}~\mathcal{F}_{\infty}.\label{211}
\end{align}
\end{proposition}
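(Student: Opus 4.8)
The plan is to treat the three assertions \eqref{209}, \eqref{210}, \eqref{211} in order, since each feeds the next. For \eqref{209}, I would start from the explicit kernel \eqref{208} and multiply the transition probabilities along a path. Because each factor carries the ratio $\hat{Z}_{\infty,\beta}^{\omega}(i+1,y)/\hat{Z}_{\infty,\beta}^{\omega}(i,x)$, the product telescopes, and for $A\in\mathcal{F}_N$ one obtains the closed form $\mathbf{P}_{\beta,\mathrm{mc}}^{\omega}(A)=(\hat{Z}_{\infty,\beta}^{\omega})^{-1}\mathbf{E}[\mathbbm{1}_A\exp(\sum_{n=1}^N(\beta\omega_{n,S_n}-\lambda(\beta)))\hat{Z}_{\infty,\beta}^{\omega}(N,S_N)]$, using $S_0=0$ and $\hat{Z}_{\infty,\beta}^{\omega}(0,0)=\hat{Z}_{\infty,\beta}^{\omega}$. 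On the other side, for $M\geq N$ I would condition $\mathbf{P}_{M,\beta}^{\omega}(A)$ on $\mathcal{F}_N$ and use the Markov property of $S$ to replace the tail of the exponential weight by $\hat{Z}_{M-N,\beta}^{\omega}(N,S_N)$, obtaining the same expression with $\hat{Z}_{M-N,\beta}^{\omega}$ in the numerator and $\hat{Z}_{M,\beta}^{\omega}$ in the denominator. Letting $M\to\infty$, the denominator tends to $\hat{Z}_{\infty,\beta}^{\omega}>0$ (here weak disorder is essential), while the numerator converges by Scheff\'e's lemma: the integrands are nonnegative, converge pointwise, and their total $\mathbf{P}$-masses equal $\hat{Z}_{M,\beta}^{\omega}\to\hat{Z}_{\infty,\beta}^{\omega}$, the latter being exactly the $N$-step iterate of the fixed-point identity \eqref{206}. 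This gives \eqref{209}.

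For \eqref{210}, the idea is that $\mathbbm{E}\mathbf{P}_{\beta,\mathrm{mc}}^{\omega}$ and $\mathbbm{E}\mathbf{P}_{\infty,\beta}^{\omega}$ are both genuine probability measures on $\mathcal{F}_\infty$ that already agree on the generating algebra $\bigcup_N\mathcal{F}_N$ by \eqref{209}. The first is a measure because $\mathbf{P}_{\beta,\mathrm{mc}}^{\omega}$ is (Kolmogorov extension of the consistent kernels) and $\mathbbm{E}$ preserves countable additivity by dominated convergence. The second is the delicate point, since $\mathbf{P}_{\infty,\beta}^{\omega}$ is only known to be finitely additive; here I would write $\mathbbm{E}\mathbf{P}_{\infty,\beta}^{\omega}(A)=\lim_N\mathbbm{E}\mathbf{P}_{N,\beta}^{\omega}(A)$ by bounded convergence and invoke the Vitali--Hahn--Saks (Nikodym) theorem, which guarantees that a pointwise limit of the countably additive measures $\mathbbm{E}\mathbf{P}_{N,\beta}^{\omega}$ is again countably additive. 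Agreement on the algebra then forces agreement on $\mathcal{F}_\infty$ by uniqueness of extension.

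Turning to \eqref{211}, one inclusion is clean. For $A\in\mathcal{F}_\infty$ with $\mathbf{P}(A)=0$, each finite-volume measure $\mathbf{P}_{N,\beta}^{\omega}$ is absolutely continuous with respect to $\mathbf{P}$ with the bona fide density $(Z_{N,\beta}^{\omega})^{-1}\exp(\sum_{n=1}^N\beta\omega_{n,S_n})$, so $\mathbf{P}_{N,\beta}^{\omega}(A)=0$ for every $N$, whence $\mathbf{P}_{\infty,\beta}^{\omega}(A)=\lim_N\mathbf{P}_{N,\beta}^{\omega}(A)=0$ and, by \eqref{210}, $\mathbbm{E}\mathbf{P}_{\beta,\mathrm{mc}}^{\omega}(A)=0$. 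Thus $\mathbbm{E}\mathbf{P}_{\beta,\mathrm{mc}}^{\omega}\ll\mathbf{P}$; in particular the density $g:=d(\mathbbm{E}\mathbf{P}_{\beta,\mathrm{mc}}^{\omega})/d\mathbf{P}$ exists and $\mathbf{E}[g]=1$, so no mass escapes to infinity.

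The reverse inclusion $\mathbf{P}\ll\mathbbm{E}\mathbf{P}_{\beta,\mathrm{mc}}^{\omega}$ is the \emph{main obstacle}, and I expect it to be exactly where weak disorder is used in an essential way: it amounts to showing $g>0$ $\mathbf{P}$-a.s. The finite-$N$ densities $\rho_N$, whose martingale limit is $g$, are all strictly positive (being expectations of strictly positive integrands under weak disorder), but this alone does not force $g>0$, since a positive martingale can vanish in the limit on a set of positive measure. The strategy I would pursue is to argue that, modulo $\mathbf{P}$-null sets, $\{g=0\}$ belongs to the tail $\sigma$-field of the path $(S_n)$ — exploiting the Markov structure of the companion chain together with the i.i.d.\ increments — and then apply Kolmogorov's zero--one law; since $\mathbf{E}[g]=1$ excludes $\mathbf{P}(g=0)=1$, one concludes $\mathbf{P}(g=0)=0$. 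Making the ``essentially tail'' reduction precise is the hard part: one must control how altering finitely many initial steps of the path perturbs the limit $g$, and it is here that the positivity $\hat{Z}_{\infty,\beta}^{\omega}>0$ throughout the weak disorder regime does the real work, consistent with the fact that in strong disorder $g$ genuinely degenerates and mutual absolute continuity fails.
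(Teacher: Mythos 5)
First, a point of reference: the paper does not prove Proposition \ref{T201} at all --- it is cited verbatim from Comets--Yoshida, with only the remark that it ``is not hard to deduce'' from \cite[Lemma 4.2]{comets2006directed}, of which the paper records just the weakened form Lemma \ref{T202}. So your proposal is being measured against the cited source's route, not against an argument in the paper itself.

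Your treatment of \eqref{209} --- telescoping the kernel \eqref{208} to get $\mathbf{P}_{\beta,\mathrm{mc}}^{\omega}(A)=(\hat{Z}_{\infty,\beta}^{\omega})^{-1}\mathbf{E}[\mathbbm{1}_{A}e_{N}\hat{Z}_{\infty,\beta}^{\omega}(N,S_{N})]$ with $e_{N}=\exp(\sum_{n=1}^{N}(\beta\omega_{n,S_{n}}-\lambda(\beta)))$, conditioning $\mathbf{P}_{M,\beta}^{\omega}$ on $\mathcal{F}_{N}$, and passing $M\to\infty$ by Scheff\'e using that the total masses $\hat{Z}_{M,\beta}^{\omega}$ converge to $\hat{Z}_{\infty,\beta}^{\omega}=\mathbf{E}[e_{N}\hat{Z}_{\infty,\beta}^{\omega}(N,S_{N})]$, the $N$-fold iterate of \eqref{206} --- is correct and is essentially the standard argument; note only that pointwise convergence of $\hat{Z}_{M-N,\beta}^{\omega}(N,y)$ holds a.s.\ simultaneously for all $y$ because $\mathbbm{Z}$ is countable, and the iterate of \eqref{206} needs a one-line induction with Tonelli. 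Your proof of \eqref{210} is also correct and genuinely different in one spot: rather than quoting the quantitative Lemma 4.2, you obtain countable additivity of $A\mapsto\lim_{N}\mathbbm{E}\mathbf{P}_{N,\beta}^{\omega}(A)$ from the Nikodym/Vitali--Hahn--Saks theorem and conclude by $\pi$-system uniqueness; this soft argument works, granted the point you correctly flag, that $\mathbf{P}_{\beta,\mathrm{mc}}^{\omega}$ is a genuine probability measure (Ionescu--Tulcea, the kernels being proper by the countably many time-space shifted a.s.\ versions of \eqref{206}). The direction $\mathbbm{E}\mathbf{P}_{\beta,\mathrm{mc}}^{\omega}\ll\mathbf{P}$ in \eqref{211} is likewise fine.

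The genuine gap is the direction $\mathbf{P}\ll\mathbbm{E}\mathbf{P}_{\beta,\mathrm{mc}}^{\omega}$, which you flag honestly but do not close, and the repair you sketch looks unworkable as stated. By Tonelli, the level-$N$ density is $u_{N}=\mathbbm{E}[e_{N}\hat{Z}_{\infty,\beta}^{\omega}(N,S_{N})/\hat{Z}_{\infty,\beta}^{\omega}]$, a function of the first $N$ steps of the path. Changing the first increment translates the entire subsequent trajectory in space; after averaging over $\omega$, this is not a harmless perturbation, because space-shifting the environment turns the denominator $\hat{Z}_{\infty,\beta}^{\omega}(0,0)$ into a partition function started from a shifted site while it remains coupled to the numerator under the same $\mathbbm{E}$. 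So $\{g=0\}$ is not visibly a tail event modulo $\mathbf{P}$-null sets, and Kolmogorov's zero-one law does not apply off the shelf. Nor can positivity at finite levels substitute: one gets $\mathbbm{E}\mathbf{P}_{\beta,\mathrm{mc}}^{\omega}(A)>0$ for every cylinder $A$ with $\mathbf{P}(A)>0$ immediately from $\hat{Z}_{\infty,\beta}^{\omega}(N,x)>0$ a.s., but two probability measures can charge every cylinder and still be mutually singular on $\mathcal{F}_{\infty}$, which is exactly why $g>0$ a.s.\ needs a uniform estimate. The natural starting point is the identity $\mathbbm{E}\,\mathbf{E}[\mathbbm{1}_{A}e_{N}\hat{Z}_{\infty,\beta}^{\omega}(N,S_{N})]=\mathbf{P}(A)\,\mathbbm{E}[\hat{Z}_{\infty,\beta}^{\omega}]$ for $A\in\mathcal{F}_{N}$ (for each fixed path, $e_{N}$ is independent of $\hat{Z}_{\infty,\beta}^{\omega}(N,S_{N})$), combined with control of $1/\hat{Z}_{\infty,\beta}^{\omega}$ uniformly over such $A$; supplying that two-sided control is precisely the content of the full \cite[Lemma 4.2]{comets2006directed}, which the paper imports wholesale and whose weakened form, Lemma \ref{T202}, captures only the direction you did prove.
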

It is not hard to deduce Proposition \ref{T201} from \cite[Lemma 4.2]{comets2006directed}. We state a weaker version of \cite[Lemma 4.2]{comets2006directed} here, which will be helpful later in the proof of Proposition \ref{T205}.
\begin{lemma}{\cite[Lemma 4.2]{comets2006directed}}\label{T202}
Suppose $\{A_{N}\}_{N\geq1}\subset\mathcal{F}_{\infty}$ such that
\begin{equation}\label{212}
\lim\limits_{N\to\infty}\mathbf{P}(A_{N})=0.
\end{equation}
Then
\begin{equation}\label{213}
\lim\limits_{N\to\infty}\mathbbm{E}[\mathbf{P}_{N,\beta}^{\omega}(A_{N})]=\lim\limits_{N\to\infty}\mathbbm{E}
[\mathbf{P}_{\infty,\beta}^{\omega}(A_{N})]=0.
\end{equation}
\end{lemma}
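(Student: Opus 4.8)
The plan is to reduce both limits to a single estimate that controls the partition function away from zero, the one place where weak disorder enters. First I would introduce the unnormalized weight
\[
W_{N}(A):=\mathbf{E}\left[\mathbbm{1}_{A}\exp\left(\sum_{n=1}^{N}(\beta\omega_{n,S_{n}}-\lambda(\beta))\right)\right],\qquad A\in\mathcal{F}_{\infty},
\]
so that by \eqref{207} one has $\mathbf{P}_{N,\beta}^{\omega}(A)=W_{N}(A)/\hat{Z}_{N,\beta}^{\omega}$, the denominator being the special case $A=$ full path space. The key observation is the moment identity $\mathbbm{E}[W_{N}(A)]=\mathbf{P}(A)$: since $S$ is independent of $\omega$ and the space-time points $(n,S_{n})$, $1\leq n\leq N$, are pairwise distinct (their time coordinates differ), Fubini together with $\mathbbm{E}[\exp(\beta\omega_{n,S_{n}}-\lambda(\beta))]=1$ collapses the environment expectation to $1$ for each fixed path. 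In fact $(W_{N}(A))_{N\geq0}$ is a nonnegative $(\mathcal{G}_{N})$-martingale with $W_{0}(A)=\mathbf{P}(A)$, whence $\mathbbm{E}[W_{N}(A)]=\mathbf{P}(A)$ for all $N$, and $W_{N}(A)\to W_{\infty}(A)$ $\mathbbm{P}$-a.s.\ with $\mathbbm{E}[W_{\infty}(A)]\leq\mathbf{P}(A)$ by Fatou.

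Next I would run a truncation argument on the denominator. Fixing $\delta>0$ and splitting, I get
\[
\mathbbm{E}[\mathbf{P}_{N,\beta}^{\omega}(A_{N})]=\mathbbm{E}\!\left[\mathbf{P}_{N,\beta}^{\omega}(A_{N})\mathbbm{1}_{\{\hat{Z}_{N,\beta}^{\omega}>\delta\}}\right]+\mathbbm{E}\!\left[\mathbf{P}_{N,\beta}^{\omega}(A_{N})\mathbbm{1}_{\{\hat{Z}_{N,\beta}^{\omega}\leq\delta\}}\right].
\]
On the first event I bound $\mathbf{P}_{N,\beta}^{\omega}(A_{N})=W_{N}(A_{N})/\hat{Z}_{N,\beta}^{\omega}\leq W_{N}(A_{N})/\delta$, so by the moment identity this term is at most $\mathbf{P}(A_{N})/\delta\to0$ by hypothesis \eqref{212}. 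On the second event I use only $\mathbf{P}_{N,\beta}^{\omega}(A_{N})\leq1$, giving a contribution at most $\mathbbm{P}(\hat{Z}_{N,\beta}^{\omega}\leq\delta)$. Hence $\limsup_{N}\mathbbm{E}[\mathbf{P}_{N,\beta}^{\omega}(A_{N})]\leq\limsup_{N}\mathbbm{P}(\hat{Z}_{N,\beta}^{\omega}\leq\delta)$.

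To close, I would pass to the limit in the environment. Since $\hat{Z}_{N,\beta}^{\omega}\to\hat{Z}_{\infty,\beta}^{\omega}$ $\mathbbm{P}$-a.s., the event $\{\hat{Z}_{N,\beta}^{\omega}\leq\delta~\text{i.o.}\}$ is contained in $\{\hat{Z}_{\infty,\beta}^{\omega}\leq\delta\}$, so reverse Fatou gives $\limsup_{N}\mathbbm{P}(\hat{Z}_{N,\beta}^{\omega}\leq\delta)\leq\mathbbm{P}(\hat{Z}_{\infty,\beta}^{\omega}\leq\delta)$. Letting $\delta\downarrow0$ and invoking weak disorder $\mathbbm{P}(\hat{Z}_{\infty,\beta}^{\omega}>0)=1$ forces $\mathbbm{P}(\hat{Z}_{\infty,\beta}^{\omega}\leq\delta)\to\mathbbm{P}(\hat{Z}_{\infty,\beta}^{\omega}=0)=0$, which yields the first limit. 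For the second limit I would note that, letting $M\to\infty$ in $\mathbf{P}_{M,\beta}^{\omega}(A_{N})=W_{M}(A_{N})/\hat{Z}_{M,\beta}^{\omega}$, martingale convergence of the numerator and positivity of $\hat{Z}_{\infty,\beta}^{\omega}$ give $\mathbf{P}_{\infty,\beta}^{\omega}(A_{N})=W_{\infty}(A_{N})/\hat{Z}_{\infty,\beta}^{\omega}$ $\mathbbm{P}$-a.s., and then rerun the identical truncation with $\hat{Z}_{\infty,\beta}^{\omega}$ in place of $\hat{Z}_{N,\beta}^{\omega}$ and the Fatou bound $\mathbbm{E}[W_{\infty}(A_{N})]\leq\mathbf{P}(A_{N})$ in place of the exact moment identity.

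The routine parts are the moment identity and the algebra of the split; the only genuinely delicate point, and the expected main obstacle, is controlling the denominator uniformly in $N$, namely the passage $\limsup_{N}\mathbbm{P}(\hat{Z}_{N,\beta}^{\omega}\leq\delta)\leq\mathbbm{P}(\hat{Z}_{\infty,\beta}^{\omega}\leq\delta)$ followed by $\delta\downarrow0$. This is precisely where weak disorder is indispensable: without $\mathbbm{P}(\hat{Z}_{\infty,\beta}^{\omega}>0)=1$ the small-denominator term cannot be made negligible, and indeed the statement fails in strong disorder.
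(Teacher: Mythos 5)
Your proof is correct: the moment identity $\mathbbm{E}[W_{N}(A)]=\mathbf{P}(A)$, the truncation of the denominator on $\{\hat{Z}_{N,\beta}^{\omega}\leq\delta\}$, the reverse-Fatou passage to $\mathbbm{P}(\hat{Z}_{\infty,\beta}^{\omega}\leq\delta)$, and the final use of weak disorder as $\delta\downarrow0$ all hold, as does the Fatou-based rerun for $\mathbf{P}_{\infty,\beta}^{\omega}$. The paper itself gives no proof of this lemma (it is cited from \cite[Lemma 4.2]{comets2006directed}, whose argument is exactly this restricted-partition-function martingale plus small-denominator truncation), so your attempt is essentially the standard proof from the cited source; the only nitpick is that the identity $\mathbf{P}_{N,\beta}^{\omega}(A)=W_{N}(A)/\hat{Z}_{N,\beta}^{\omega}$ follows from the definition \eqref{105}, not from \eqref{207}.
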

The next proposition we cite concerns the total variation distance between the polymer measure $\mathbf{P}_{N+k,\beta}^{\omega}$ and the Markov chain $\mathbf{P}_{\beta,\mbox{mc}}^{\omega}$. We introduce the total variational norm
\begin{equation}\label{214}
||\mu-\nu||_{\mathcal{F}_{N}}:=2\sup\{\mu(A)-\nu(A):A\in\mathcal{F}_{N}\}.
\end{equation}
\begin{proposition}{\cite[Proposition 4.3]{comets2006directed}}\label{T203}
In the weak disorder regime,
\begin{equation}\label{215}
\lim\limits_{k\to\infty}\sup\limits_{N}\mathbbm{E}\left[||\mathbf{P}_{N+k,\beta}^{\omega}-\mathbf{P}_{\beta,\rm{mc}}^{\omega}||_{\mathcal{F}_{N}}\right]=0.
\end{equation}
\end{proposition}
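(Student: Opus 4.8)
The plan is to reduce the total variation on $\mathcal{F}_{N}$ to a distance between the time-$N$ endpoint laws, and then to extract uniformity in $N$ from the stationarity of the environment after time $N$. By Proposition \ref{T201}, $\mathbf{P}_{\beta,\mathrm{mc}}^{\omega}$ agrees with $\mathbf{P}_{\infty,\beta}^{\omega}$ on $\mathcal{F}_{N}$, so it suffices to compare $\mathbf{P}_{N+k,\beta}^{\omega}$ with the infinite-volume chain there. Restricting each measure to $\mathcal{F}_{N}$ and integrating out the $k$ steps after time $N$ by the Markov property of $\mathbf{P}$, the Radon--Nikodym derivative with respect to $\mathbf{P}$ factorizes as $W_{N}(S)\,g(S_{N})$, where $W_{N}(S)=\exp\big(\sum_{n=1}^{N}(\beta\omega_{n,S_{n}}-\lambda(\beta))\big)$ and, in the notation of \eqref{201}, $g(x)=\hat{Z}_{k,\beta}^{\omega}(N,x)/\hat{Z}_{N+k,\beta}^{\omega}$ for the polymer and $g(x)=\hat{Z}_{\infty,\beta}^{\omega}(N,x)/\hat{Z}_{\infty,\beta}^{\omega}$ for the Markov chain. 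Since both densities carry the common factor $W_{N}$, the two measures have the same conditional law given $S_{N}$, so the total variation on $\mathcal{F}_{N}$ collapses to the $\ell^{1}$ distance of the endpoint marginals:
\[
\big\|\mathbf{P}_{N+k,\beta}^{\omega}-\mathbf{P}_{\beta,\mathrm{mc}}^{\omega}\big\|_{\mathcal{F}_{N}}
=\sum_{x\in\mathbbm{Z}}q_{N}^{\omega}(x)\left|\frac{\hat{Z}_{k,\beta}^{\omega}(N,x)}{\hat{Z}_{N+k,\beta}^{\omega}}-\frac{\hat{Z}_{\infty,\beta}^{\omega}(N,x)}{\hat{Z}_{\infty,\beta}^{\omega}}\right|,
\qquad q_{N}^{\omega}(x):=\mathbf{E}\big[W_{N}\,\mathbbm{1}_{\{S_{N}=x\}}\big].
\]

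Using the elementary bound $\big|\tfrac{u}{U}-\tfrac{v}{V}\big|\le\tfrac{|u-v|}{U}+\tfrac{v\,|U-V|}{UV}$ with $U=\hat{Z}_{N+k,\beta}^{\omega}$, $V=\hat{Z}_{\infty,\beta}^{\omega}$, and the identities $\sum_{x}q_{N}^{\omega}(x)\hat{Z}_{k,\beta}^{\omega}(N,x)=\hat{Z}_{N+k,\beta}^{\omega}$, $\sum_{x}q_{N}^{\omega}(x)\hat{Z}_{\infty,\beta}^{\omega}(N,x)=\hat{Z}_{\infty,\beta}^{\omega}$, I would split $\mathbbm{E}[\,\|\cdot\|_{\mathcal{F}_{N}}]$ into a term $(\mathrm{I})=\mathbbm{E}\big[\sum_{x}q_{N}^{\omega}(x)|\hat{Z}_{k,\beta}^{\omega}(N,x)-\hat{Z}_{\infty,\beta}^{\omega}(N,x)|/\hat{Z}_{N+k,\beta}^{\omega}\big]$ and a term $(\mathrm{II})=\mathbbm{E}\big[|\hat{Z}_{N+k,\beta}^{\omega}-\hat{Z}_{\infty,\beta}^{\omega}|/\hat{Z}_{N+k,\beta}^{\omega}\big]$. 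The engine for both is $L^{1}$-convergence of the martingale $\hat{Z}_{N,\beta}^{\omega}\to\hat{Z}_{\infty,\beta}^{\omega}$, which I would first establish from weak disorder: conditioning at time $M$ and using that the environment after time $M$ is independent of $\mathcal{G}_{M}$ and translation invariant yields $\mathbbm{E}\big[\hat{Z}_{\infty,\beta}^{\omega}/\hat{Z}_{M,\beta}^{\omega}\big]=\mathbbm{E}[\hat{Z}_{\infty,\beta}^{\omega}]$ for every $M$; since $\hat{Z}_{M,\beta}^{\omega}\to\hat{Z}_{\infty,\beta}^{\omega}>0$ a.s. in weak disorder, Fatou forces $\mathbbm{E}[\hat{Z}_{\infty,\beta}^{\omega}]\ge1$, hence $=1$, so the martingale is uniformly integrable and converges in $L^{1}$. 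Consequently $\varepsilon_{k}:=\mathbbm{E}\big[|\hat{Z}_{k,\beta}^{\omega}-\hat{Z}_{\infty,\beta}^{\omega}|\big]\to0$, and by Scheff\'e $\hat{Z}_{\infty,\beta}^{\omega}/\hat{Z}_{M,\beta}^{\omega}\to1$ in $L^{1}$; this last fact kills term $(\mathrm{II})$ uniformly, because $\sup_{N}(\mathrm{II})=\sup_{M\ge k}\mathbbm{E}\big|1-\hat{Z}_{\infty,\beta}^{\omega}/\hat{Z}_{M,\beta}^{\omega}\big|\to0$.

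The hard part is term $(\mathrm{I})$: the factor $\hat{Z}_{N+k,\beta}^{\omega}$ in the denominator couples all scales and blocks a naive estimate, so I would split on the level sets of $\hat{Z}_{N+k,\beta}^{\omega}$. On $\{\hat{Z}_{N+k,\beta}^{\omega}\ge\delta\}$ the term is at most $\delta^{-1}\,\mathbbm{E}\big[\sum_{x}q_{N}^{\omega}(x)\,|\hat{Z}_{k,\beta}^{\omega}(N,x)-\hat{Z}_{\infty,\beta}^{\omega}(N,x)|\big]$; here is the decisive point and the source of uniformity in $N$: $q_{N}^{\omega}(x)$ is $\mathcal{G}_{N}$-measurable, whereas the discrepancy $\hat{Z}_{k,\beta}^{\omega}(N,x)-\hat{Z}_{\infty,\beta}^{\omega}(N,x)$ depends only on the environment after time $N$, hence is independent of $q_{N}^{\omega}(x)$ and, by translation invariance, is distributed as $\hat{Z}_{k,\beta}^{\omega}-\hat{Z}_{\infty,\beta}^{\omega}$. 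Since $\mathbbm{E}[q_{N}^{\omega}(x)]=\mathbf{P}(S_{N}=x)$, this expectation collapses to exactly $\varepsilon_{k}$, independently of $N$, so the high-level part is bounded by $\varepsilon_{k}/\delta$. On $\{\hat{Z}_{N+k,\beta}^{\omega}<\delta\}$ I use the crude bound $\sum_{x}q_{N}^{\omega}(x)|\cdots|/\hat{Z}_{N+k,\beta}^{\omega}\le 1+\hat{Z}_{\infty,\beta}^{\omega}/\hat{Z}_{N+k,\beta}^{\omega}$, so this part is at most $\mathbbm{P}(\hat{Z}_{N+k,\beta}^{\omega}<\delta)+\mathbbm{E}\big[(\hat{Z}_{\infty,\beta}^{\omega}/\hat{Z}_{N+k,\beta}^{\omega})\,\mathbbm{1}_{\{\hat{Z}_{N+k,\beta}^{\omega}<\delta\}}\big]$. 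Because $\hat{Z}_{M,\beta}^{\omega}\to\hat{Z}_{\infty,\beta}^{\omega}>0$ a.s. we have $\inf_{M}\hat{Z}_{M,\beta}^{\omega}>0$ a.s., whence $\sup_{M}\mathbbm{P}(\hat{Z}_{M,\beta}^{\omega}<\delta)\le\mathbbm{P}(\inf_{M}\hat{Z}_{M,\beta}^{\omega}<\delta)\to0$ as $\delta\to0$; and since $\{\hat{Z}_{\infty,\beta}^{\omega}/\hat{Z}_{M,\beta}^{\omega}\}_{M}$ is uniformly integrable (being $L^{1}$-convergent), the remaining piece is small once $\mathbbm{P}(\hat{Z}_{N+k,\beta}^{\omega}<\delta)$ is, uniformly in $N$ and $k$.

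Combining the pieces concludes the proof: given $\eta>0$, I would first fix $\delta$ small enough that the $\{\hat{Z}_{N+k,\beta}^{\omega}<\delta\}$ contribution together with the tail of $(\mathrm{II})$ stays below $\eta/2$ uniformly in $N$, and then take $k$ large so that $\varepsilon_{k}/\delta<\eta/2$, giving $\sup_{N}\mathbbm{E}[\|\cdot\|_{\mathcal{F}_{N}}]<\eta$ for all large $k$, which is \eqref{215}. The only genuine obstacle is the uniformity in $N$ of term $(\mathrm{I})$: the ratio structure with $\hat{Z}_{N+k,\beta}^{\omega}$ in the denominator is tamed by the level-set truncation, while the essential input is the stationarity and independence of the environment after time $N$, which makes the discrepancy $\hat{Z}_{k,\beta}^{\omega}(N,\cdot)-\hat{Z}_{\infty,\beta}^{\omega}(N,\cdot)$ have an $N$-independent law and reduces the whole error to the single quantity $\varepsilon_{k}$.
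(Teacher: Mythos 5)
Your proof is correct, but note that the paper itself offers no proof of this statement: Proposition \ref{T203} is quoted from \cite[Proposition 4.3]{comets2006directed} in a list of results the paper explicitly says it does not prove, so the only meaningful comparison is with the original Comets--Yoshida argument, which your proposal essentially reconstructs. The skeleton is the same: both densities on $\mathcal{F}_{N}$ factor through the common weight $W_{N}$ times an endpoint factor ($\hat{Z}_{k,\beta}^{\omega}(N,\cdot)/\hat{Z}_{N+k,\beta}^{\omega}$ versus $\hat{Z}_{\infty,\beta}^{\omega}(N,\cdot)/\hat{Z}_{\infty,\beta}^{\omega}$, the latter by telescoping \eqref{208}), the total variation collapses to an endpoint $\ell^{1}$ distance, and everything reduces to $L^{1}$ (equivalently uniform-integrability) convergence of the martingale under weak disorder, with the independence and stationarity of the post-$N$ environment producing the $N$-independent quantity $\varepsilon_{k}$; your level-set truncation $\{\hat{Z}_{N+k,\beta}^{\omega}\geq\delta\}$, combined with $\mathbbm{P}(\inf_{M}\hat{Z}_{M,\beta}^{\omega}<\delta)\to0$ and uniform integrability of $\{\hat{Z}_{\infty,\beta}^{\omega}/\hat{Z}_{M,\beta}^{\omega}\}_{M}$, is a perfectly sound way to tame the coupled denominator. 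Your rederivation that weak disorder forces $\mathbbm{E}[\hat{Z}_{\infty,\beta}^{\omega}]=1$ (via $\mathbbm{E}[\hat{Z}_{\infty,\beta}^{\omega}/\hat{Z}_{M,\beta}^{\omega}]=\mathbbm{E}[\hat{Z}_{\infty,\beta}^{\omega}]$, Fatou, then Scheff\'e) is a clean and correct route to the needed uniform integrability.

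Two points deserve flagging, neither fatal. First, you repeatedly use the a.s.\ identity $\hat{Z}_{\infty,\beta}^{\omega}=\sum_{x}q_{N}^{\omega}(x)\hat{Z}_{\infty,\beta}^{\omega}(N,x)$ (in collapsing term $(\mathrm{II})$ to $\mathbbm{E}|1-\hat{Z}_{\infty,\beta}^{\omega}/\hat{Z}_{N+k,\beta}^{\omega}|$, in the crude bound $1+\hat{Z}_{\infty,\beta}^{\omega}/\hat{Z}_{N+k,\beta}^{\omega}$ on the low level set, and in the identity behind $\mathbbm{E}[\hat{Z}_{\infty,\beta}^{\omega}/\hat{Z}_{M,\beta}^{\omega}]=\mathbbm{E}[\hat{Z}_{\infty,\beta}^{\omega}]$) as if it were free; Fatou alone gives only the inequality $\geq$, and equality requires the equal-expectation argument of \eqref{204}--\eqref{206} iterated over $N$ steps (a.s.\ simultaneously over the countably many sites, then Tonelli). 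That is exactly what the paper's Section 2 preamble supplies, so you should cite \eqref{206} rather than assert the identity. Second, a wording slip in your final combination: term $(\mathrm{II})$ is not controlled by the choice of $\delta$ but by taking $k$ large; the quantifier order you actually need (fix $\delta$ first, using the $k$- and $N$-uniform bounds on the low level set, then send $k\to\infty$ to kill $\varepsilon_{k}/\delta$ and $\sup_{M\geq k}\mathbbm{E}|1-\hat{Z}_{\infty,\beta}^{\omega}/\hat{Z}_{M,\beta}^{\omega}|$) is available in your setup and makes the conclusion \eqref{215} sound.
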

The last result we cite here is the following lemma, which is a key ingredient to deduce our main result Theorem \ref{T207}.
\begin{lemma}{\cite[Lemma 5.3]{comets2006directed}}\label{T204}
For all $B\in\mathcal{F}_{\infty}^{\bigotimes2}$, the following limits exists $\mathbbm{P}$-a.s. in the weak disorder regime:
\begin{equation}\label{216}
(\mathbf{P}_{\infty,\beta}^{\omega})^{(2)}(B):=\lim\limits_{N\to\infty}(\mathbf{P}_{N,\beta}^{\omega})^{\bigotimes2}(B),
\end{equation}
where the definition of $(\mathbf{P}_{N-1,\beta}^{\omega})^{\bigotimes2}$ is given in Theorem \ref{T103}.

Moreover,
\begin{align}
(\mathbf{P}_{\infty,\beta}^{\omega})^{(2)}(B)&=(\mathbf{P}_{\beta,\rm{mc}}^{\omega})^{\bigotimes2}(B),~~\forall B\in\bigcup\limits_{N=1}^{\infty}\mathcal{F}_{N}^{\bigotimes2}\label{217}\\
\mathbbm{E}[(\mathbf{P}_{\infty,\beta}^{\omega})^{(2)}(B)]&=\mathbbm{E}[(\mathbf{P}_{\beta,\rm{mc}}^{\omega})^{\bigotimes2}(B)],~~\forall B\in\mathcal{F}_{\infty}^{\bigotimes2}\label{218}\\
\mathbbm{E}(\mathbf{P}_{\beta,\rm{mc}}^{\omega})^{\bigotimes2}&\ll\mathbf{P}^{\bigotimes2},~~\mbox{on}~\mathcal{F}_{\infty}^{\bigotimes2}\label{219}.
\end{align}
\end{lemma}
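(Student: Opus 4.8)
The plan is to prove Lemma \ref{T204} as the two-replica analogue of Proposition \ref{T201}, reusing the single-copy machinery (equations \eqref{207}, \eqref{209}) and the uniform total variation estimate of Proposition \ref{T203}. I would first settle everything on the $\pi$-system of \emph{product} cylinder events $A_{1}\times A_{2}$ with $A_{1},A_{2}\in\mathcal{F}_{k}$. Here the two independent copies decouple, $(\mathbf{P}_{N,\beta}^{\omega})^{\otimes2}(A_{1}\times A_{2})=\mathbf{P}_{N,\beta}^{\omega}(A_{1})\,\mathbf{P}_{N,\beta}^{\omega}(A_{2})$, and each factor converges $\mathbbm{P}$-a.s.\ by \eqref{207}; hence the limit \eqref{216} exists on product cylinders and, by \eqref{209}, equals $\mathbf{P}_{\beta,\mathrm{mc}}^{\omega}(A_{1})\mathbf{P}_{\beta,\mathrm{mc}}^{\omega}(A_{2})=(\mathbf{P}_{\beta,\mathrm{mc}}^{\omega})^{\otimes2}(A_{1}\times A_{2})$, which is \eqref{217} on this $\pi$-system.

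Next I would upgrade this to all cylinders $B\in\bigcup_{N}\mathcal{F}_{N}^{\otimes2}$. Writing $B$ as a countable disjoint union of atoms $\{S^{1}_{\bullet}=u,\,S^{2}_{\bullet}=v\}$ with $u,v\in\mathbbm{Z}^{k}$, the finitely many atoms with $|u|\vee|v|\le R$ are handled by the product-cylinder step, so the only issue is the infinite tail $|S^{1}_{k}|\vee|S^{2}_{k}|>R$ -- exactly the long-range feature flagged in Remark \ref{T120}, since a heavy-tailed walk reaches infinitely many sites. I would dominate this tail uniformly in $N$, a.s., by noting that $\{|S^{1}_{k}|>R\}$ and $\{|S^{2}_{k}|>R\}$ are themselves single-coordinate product cylinders, so $(\mathbf{P}_{N,\beta}^{\omega})^{\otimes2}(|S^{1}_{k}|>R)=\mathbf{P}_{N,\beta}^{\omega}(|S_{k}|>R)\to\mathbf{P}_{\beta,\mathrm{mc}}^{\omega}(|S_{k}|>R)$ a.s., and the right-hand side tends to $0$ as $R\to\infty$ because $\mathbf{P}_{\beta,\mathrm{mc}}^{\omega}$ is a genuine (countably additive, hence tight) probability measure. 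This yields uniform-in-$N$ negligibility of the tail and legitimises exchanging $\lim_{N}$ with the infinite sum, giving \eqref{216} and \eqref{217} on all cylinders.

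For \eqref{218} and \eqref{219} I would pass from the cylinder algebra to $\mathcal{F}_{\infty}^{\otimes2}$ at the level of $\mathbbm{E}$, exactly as Proposition \ref{T201} is deduced from Lemma \ref{T202}. The key input is the two-replica total variation bound
\[
\lim_{k\to\infty}\sup_{N}\mathbbm{E}\big[\|(\mathbf{P}_{N+k,\beta}^{\omega})^{\otimes2}-(\mathbf{P}_{\beta,\mathrm{mc}}^{\omega})^{\otimes2}\|_{\mathcal{F}_{N}^{\otimes2}}\big]=0,
\]
which follows from Proposition \ref{T203} and the subadditivity of total variation under products, $\|\mu^{\otimes2}-\nu^{\otimes2}\|\le2\|\mu-\nu\|$. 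Since $\mathbbm{E}(\mathbf{P}_{\beta,\mathrm{mc}}^{\omega})^{\otimes2}$ is countably additive, a routine approximation of $B\in\mathcal{F}_{\infty}^{\otimes2}$ by cylinders together with this uniform bound and bounded convergence ($0\le(\mathbf{P}_{N,\beta}^{\omega})^{\otimes2}(B)\le1$) gives $\mathbbm{E}[(\mathbf{P}_{\infty,\beta}^{\omega})^{(2)}(B)]=\lim_{N}\mathbbm{E}[(\mathbf{P}_{N,\beta}^{\omega})^{\otimes2}(B)]=\mathbbm{E}[(\mathbf{P}_{\beta,\mathrm{mc}}^{\omega})^{\otimes2}(B)]$, i.e.\ \eqref{218}. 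For \eqref{219}, I would prove the two-replica analogue of Lemma \ref{T202} (if $\mathbf{P}^{\otimes2}(B_{N})\to0$ then $\mathbbm{E}[(\mathbf{P}_{N,\beta}^{\omega})^{\otimes2}(B_{N})]\to0$, via weak disorder and uniform integrability of the two-copy weight); applying it with $B_{N}\equiv B$ when $\mathbf{P}^{\otimes2}(B)=0$ and invoking \eqref{218} forces $\mathbbm{E}[(\mathbf{P}_{\beta,\mathrm{mc}}^{\omega})^{\otimes2}(B)]=0$.

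The main obstacle is the $\mathbbm{P}$-a.s.\ existence of the limit in \eqref{216} for a \emph{general} (non-cylinder) $B\in\mathcal{F}_{\infty}^{\otimes2}$. In the single-copy case \eqref{207} the numerator $\mathbf{E}[\mathbbm{1}_{A}\exp(\sum_{n\le N}(\beta\omega_{n,S_{n}}-\lambda(\beta)))]$ is a genuine $\mathbbm{P}$-martingale, so martingale convergence delivers the a.s.\ limit at once; for two copies in the \emph{same} environment the analogous numerator is only a \emph{submartingale} (coincidences $S^{1}_{n}=S^{2}_{n}$ produce the extra factor $e^{\lambda(2\beta)-2\lambda(\beta)}\ge1$), and it is $L^{1}(\mathbbm{P})$-bounded only in the $L^{2}$-region, not throughout weak disorder. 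Hence the martingale shortcut is unavailable, and one must instead extract the a.s.\ limit from the uniform total variation control above, upgrading cylinder-level a.s.\ convergence to all of $\mathcal{F}_{\infty}^{\otimes2}$ by exploiting the countable additivity and regularity of the dominating measure $\mathbbm{E}(\mathbf{P}_{\beta,\mathrm{mc}}^{\omega})^{\otimes2}$. This, intertwined with the heavy-tailed walk's access to infinitely many sites (Remark \ref{T120}), is where the genuine work -- and the departure from the nearest-neighbour argument -- lies.
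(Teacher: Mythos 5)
Your proposal addresses a statement the paper itself does not prove: Lemma \ref{T204} is quoted from \cite[Lemma 5.3]{comets2006directed}, and the paper's remark immediately after it names the mechanism your plan is missing. Your cylinder-level work is sound: factorization on product cylinders, $\mathbbm{P}$-a.s.\ convergence of each factor via \eqref{207} and identification via \eqref{209}, then the countable-atom decomposition with uniform tail control (using tightness of the genuine measure $\mathbf{P}_{\beta,\rm{mc}}^{\omega}$) gives \eqref{216}--\eqref{217} on $\bigcup_{N}\mathcal{F}_{N}^{\bigotimes2}$. Moreover, once \eqref{216} is known for \emph{all} $B$, \eqref{218} and \eqref{219} follow softly; for \eqref{219} you do not even need a two-replica Lemma \ref{T202}, since each $(\mathbf{P}_{N,\beta}^{\omega})^{\bigotimes2}\ll\mathbf{P}^{\bigotimes2}$ by construction, so $\mathbf{P}^{\bigotimes2}(B)=0$ forces $(\mathbf{P}_{N,\beta}^{\omega})^{\bigotimes2}(B)=0$ for every $N$ and \eqref{218} finishes. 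The genuine gap is exactly where you locate it---a.s.\ existence of the limit \eqref{216} for general $B\in\mathcal{F}_{\infty}^{\bigotimes2}$---but the remedy you propose cannot close it. The estimate you extract from Proposition \ref{T203} has the form $\lim_{k}\sup_{N}\mathbbm{E}[\|\cdot\|]=0$: control in expectation, i.e.\ $\sup_{N}\mathbbm{E}$, not $\mathbbm{E}\sup_{N}$. This yields convergence of $\mathbbm{E}[(\mathbf{P}_{N,\beta}^{\omega})^{\bigotimes2}(B)]$ and convergence in $\mathbbm{P}$-probability, but it cannot produce $\mathbbm{P}$-a.s.\ convergence along the full sequence for a fixed non-cylinder $B$. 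Likewise, upgrading cylinder-level a.s.\ convergence to $\mathcal{F}_{\infty}^{\bigotimes2}$ would require, for a.e.\ fixed $\omega$, a bound on $(\mathbf{P}_{N,\beta}^{\omega})^{\bigotimes2}(B\triangle B_{j})$ uniform in $N$ (setwise convergence on an algebra does not extend to the generated $\sigma$-field without pathwise equicontinuity), and nothing in your toolkit supplies one: the ``uniform integrability of the two-copy weight'' you invoke for a two-replica Lemma \ref{T202} amounts to $L^{2}$-boundedness of $\hat{Z}_{N,\beta}^{\omega}$, which, as you yourself observe, fails in the part of the weak disorder regime beyond the $L^{2}$ region.

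The missing idea---flagged in the paper's comment that the proof is ``much more technical, involving Doob's decomposition of submartingale''---is the following. For fixed $B$, set $W_{N}(B)=\mathbf{E}^{\bigotimes2}\left[\mathbbm{1}_{B}\exp\left(\sum_{n=1}^{N}\beta(\omega_{n,S_{n}^{1}}+\omega_{n,S_{n}^{2}})-2N\lambda(\beta)\right)\right]$. Coincidences $S_{n}^{1}=S_{n}^{2}$ contribute a factor $e^{\lambda(2\beta)-2\lambda(\beta)}\geq1$, so $W_{N}(B)$ is a nonnegative $\mathbbm{P}$-submartingale rather than a martingale. Comets and Yoshida write its Doob decomposition $W_{N}(B)=M_{N}+A_{N}$ and show the increments of the compensator $A_{N}$ are controlled by the replica overlap, so that $A_{\infty}<\infty$ a.s.\ in weak disorder precisely because $\sum_{N}I_{N}<\infty$ a.s.\ there (Theorem \ref{T103}, \eqref{110}). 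On $\{A_{\infty}<\infty\}$ the martingale part is bounded below by $-A_{\infty}$, so a localization argument gives a.s.\ convergence of $M_{N}$, hence of $W_{N}(B)$; dividing by $(\hat{Z}_{N,\beta}^{\omega})^{2}\to(\hat{Z}_{\infty,\beta}^{\omega})^{2}>0$ yields \eqref{216} for every fixed $B$, after which \eqref{217}--\eqref{219} follow along lines close to yours. This replica-overlap submartingale argument---not total variation control---is the engine of the lemma, and it is the one step your proposal leaves unsupplied.
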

Note that by \cite[Remark 5.3]{comets2006directed}, we cannot identify $(\mathbf{P}_{\infty,\beta}^{\omega})^{(2)}$ with $(\mathbf{P}_{\infty,\beta}^{\omega})^{\bigotimes2}$ because we do not know whether $\mathbf{P}_{\infty,\beta}^{\omega}$ is a countably additive product measure. Although Lemma \ref{T204} looks similar to Proposition \ref{T201}, the proof of Lemma \ref{T204} is much more technical, involving Doob's decomposition of submartingale, since $\mathbf{E}^{\bigotimes2}\left[\exp\left(\sum\limits_{n=1}^{N}\beta(\omega_{n,S_{n}^{1}}+\omega_{n,S_{n}^{2}})-2N\lambda(\beta)\right)\right]$ is no longer a $\mathbbm{P}$-martingale with respect to filtration $\mathcal{G}_{N}$.
\subsection{End of the proof of Theorem \ref{T109}}
Now we can prove Theorem \ref{T109}. First, under the probability measure $\mathbf{P}_{\beta,\mbox{mc}}^{\omega}$, we establish an analogue of averaged invariance principle for the c$\grave{\mbox{a}}$dl$\grave{\mbox{a}}$g process $(X_{t}^{N})_{t\in[0,1]}$ via a second moment calculation and Proposition \ref{T201}. Since the Markov chain and the limit of the polymer measure $\mathbf{P}_{N,\beta}^{\omega}$ coincide on the $\sigma$-field generated by the random walk $S$ up to any finite time, we can apply Proposition \ref{T203} to extend the analogue of averaged invariance principle from $\mathbf{P}_{\beta,\mbox{mc}}^{\omega}$ to the polymer measure $\mathbf{P}_{N,\beta}^{\omega}$. Then, by the same procedure above, we can establish the analogue of averaged invariance principle for the i.i.d. couple $((X_{t}^{N})_{t\in[0,1]},(\tilde{X}_{t}^{N})_{t\in[0,1]})$ under the product measure $(\mathbf{P}_{\beta,\mbox{mc}}^{\omega})^{\bigotimes2}$ via Lemma \ref{T204}. Finally, since $(X_{t}^{N})_{t\in[0,1]}$ and $(\tilde{X}_{t}^{N})_{t\in[0,1]}$ are i.i.d., $\mathbf{E}_{\beta,\mbox{mc}}^{\omega}[F((X_{t}^{N})_{t\in[0,1]})]$ converges in $L^{2}$ (thus it converges in probability). The convergence in probability of $\mathbf{E}_{N,\beta}^{\omega}[F((X_{t}^{N})_{t\in[0,1]})]$ will then follow by applying Proposition \ref{T201} again.

More precisely, our first step is to establish the following proposition.
\begin{proposition}\label{T205}
Assume that $\alpha\in(0,1]$ and weak disorder holds. Then the path measures
\begin{align}
\mathbbm{E}\mathbf{P}_{N,\beta}^{\omega}((X_{t}^{N})_{t\in[0,1]}\in\cdot)&\Rightarrow\mathbf{P}^{X}((X_{t})_{t\in[0,1]}\in\cdot)~~\mbox{weakly as}~N\to\infty,
\label{220}\\
\mathbbm{E}\mathbf{P}_{\beta,\rm{mc}}^{\omega}((X_{t}^{N})_{t\in[0,1]}\in\cdot)&\Rightarrow\mathbf{P}^{X}((X_{t})_{t\in[0,1]}\in\cdot)~~\mbox{weakly as}~N
\to\infty.
\label{221}
\end{align}
\end{proposition}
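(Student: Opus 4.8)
The plan is to establish weak convergence in $D[0,1]$ through the two standard ingredients — convergence of the finite-dimensional distributions and tightness — carrying out the Comets--Yoshida scheme but with the rescaled estimates adapted to heavy tails. I would prove the Markov-chain statement \eqref{221} first, since $\mathbf{P}_{\beta,\mathrm{mc}}^{\omega}$ is a genuine probability measure defined for all times and hence free of horizon effects, and then deduce the polymer statement \eqref{220} from it using the total-variation estimate of Proposition \ref{T203} together with Lemma \ref{T202}.

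For \eqref{221} the crucial structural input is the two-sided absolute continuity $\mathbf{P}\ll\mathbbm{E}\mathbf{P}_{\beta,\mathrm{mc}}^{\omega}\ll\mathbf{P}$ on $\mathcal{F}_{\infty}$ from Proposition \ref{T201}. Writing $\phi=\frac{d\,\mathbbm{E}\mathbf{P}_{\beta,\mathrm{mc}}^{\omega}}{d\mathbf{P}}\in L^{1}(\mathbf{P})$, the averaged expectation of a finite-dimensional functional becomes $\mathbf{E}[g(X_{t_1}^{N},\dots,X_{t_j}^{N})\phi]$, and I would approximate $\phi$ by $\phi_{k}=\mathbbm{E}[\phi\mid\mathcal{F}_{k}]$. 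Since under the $a_{N}$-rescaling the first $k$ steps contribute only a shift $S_{k}/a_{N}\to0$, the observable $g(X_{t_1}^{N},\dots)$ asymptotically decorrelates from $\mathcal{F}_{k}$; combined with the classical invariance principle $X^{N}\Rightarrow X$ under $\mathbf{P}$ and $\mathbf{E}[\phi]=1$, this yields convergence of the averaged finite-dimensional laws to those of the $\alpha$-stable process. Tightness of $\{\mathbbm{E}\mathbf{P}_{\beta,\mathrm{mc}}^{\omega}(X^{N}\in\cdot)\}$ then transfers from tightness under $\mathbf{P}$ by uniform integrability of the fixed density $\phi$. (One may alternatively control the second moment through the replica relations \eqref{217}--\eqref{219}, reducing it to a $\mathbf{P}^{\otimes2}$ computation; that computation is in any case needed downstream for the $L^{2}$ step that promotes \eqref{221} to the in-probability statement of Theorem \ref{T109}.)

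To pass to \eqref{220}, I would fix a gap sequence $m_{N}\to\infty$ with $m_{N}/N\to0$, say $m_{N}=\lfloor\sqrt{N}\rfloor$. Applying Proposition \ref{T203} with base $N-m_{N}$ and gap $m_{N}$ gives $\mathbbm{E}\,\|\mathbf{P}_{N,\beta}^{\omega}-\mathbf{P}_{\beta,\mathrm{mc}}^{\omega}\|_{\mathcal{F}_{N-m_{N}}}\to0$, so every bounded functional of the truncated path $(X_{t}^{N})_{t\le1-m_{N}/N}$ has the same polymer-averaged and Markov-chain-averaged limits; since $1-m_{N}/N\to1$ and $a_{N-m_{N}}/a_{N}\to1$, statement \eqref{221} identifies this common limit as the stable process. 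The leftover segment on $[1-m_{N}/N,1]$ is handled by Lemma \ref{T202}: under $\mathbf{P}$ its oscillation is of order $(a_{m_{N}}/a_{N})\sup_{[0,1]}|X|\to0$ (the probability of even a single macroscopic jump among the last $m_{N}$ steps is $O(m_{N}/N)\to0$), and Lemma \ref{T202} then forces the polymer-averaged probability of a macroscopic oscillation there to vanish as well. Combining the bulk convergence with the negligibility of the endpoint segment gives both finite-dimensional convergence up to time $1$ and the tightness needed for \eqref{220}.

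The main obstacle is precisely this endpoint/horizon region: Proposition \ref{T203} structurally requires a diverging gap before the horizon, so it never reaches the pinned endpoint $X_{1}^{N}=S_{N}/a_{N}$ under $\mathbf{P}_{N,\beta}^{\omega}$, which is where the polymer tilts the walk most strongly and where the heavy tails (Remark \ref{T120}) bite. The delicate balance is to take $m_{N}$ large enough that Proposition \ref{T203} applies ($m_{N}\to\infty$) yet small enough that the residual window is macroscopically negligible ($m_{N}/N\to0$), and to run the residual estimate in the Skorohod topology of \eqref{121} rather than the uniform one, so that a lone large jump near time $1$ is absorbed by the time-reparametrization. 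Verifying that the hypotheses of Lemma \ref{T202} hold uniformly — i.e.\ that the last-segment oscillation is genuinely negligible despite heavy-tailed increments — is the technical heart of the argument. (The restriction $\alpha\in(0,1]$ is not a limitation of this method but reflects the fact, recorded in Theorem \ref{T111}, that a weak-disorder regime can exist only when $\alpha\le1$.)
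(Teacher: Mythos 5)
Your proposal is correct in substance and shares the paper's overall architecture --- prove \eqref{221} first, then transfer to \eqref{220} via Proposition \ref{T203} and Lemma \ref{T202} --- but both halves run along genuinely different lines. For \eqref{221}, the paper reduces to bounded Lipschitz functionals and proves the key limit \eqref{224} by a lacunary-subsequence argument: along subsequences with $N_{k+1}/N_{k}\geq\rho>1$, the Ces\`aro averages $\frac{1}{n}\sum_{k=1}^{n}f_{N_{k}}$ converge in $\mathbf{P}$-probability via a second-moment estimate whose cross terms are controlled by $(a_{N_{k}}/a_{N_{j}})^{p}$ and Potter bounds; this transfers to $\mathbbm{E}\mathbf{P}_{\beta,{\rm mc}}^{\omega}$-probability by \eqref{211}, and a subsequence-extraction plus bounded convergence then upgrades to full convergence of $\mathbbm{E}[\mathbf{E}_{\beta,{\rm mc}}^{\omega}[f_{N}]]$. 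Your route --- write $\phi=\frac{d\,\mathbbm{E}\mathbf{P}_{\beta,{\rm mc}}^{\omega}}{d\mathbf{P}}$, approximate by $\phi_{k}=\mathbbm{E}[\phi\,|\,\mathcal{F}_{k}]$ in $L^{1}$, and use that $\mathbf{E}[g(X^{N})\,|\,\mathcal{F}_{k}]\to\mathbf{E}^{X}[g]$ because the first $k$ steps are $o(a_{N})$ --- is more direct and dispenses with the covariance machinery entirely; note, however, that the decorrelation step still requires essentially the Skorohod-distance comparison the paper carries out in \eqref{230}--\eqref{233}, so the technical content is relocated rather than removed, and your uniform-integrability transfer of tightness is sound (in fact the density argument already handles all bounded continuous functionals, so separate tightness is redundant). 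For \eqref{220}, the paper avoids your coupled gap $m_{N}=\lfloor\sqrt{N}\rfloor$: it fixes $k$, uses the three-term decomposition \eqref{229}, sends $N\to\infty$ first (the truncation term vanishing by Lemma \ref{T202} applied to the events \eqref{233}, the middle term bounded uniformly in $N$ as in \eqref{234}), and only then sends $k\to\infty$, exploiting the $\sup_{N}$ built into \eqref{215}; your diverging-gap variant works precisely because of that same $\sup_{N}$, at the cost of an extra perturbation step to apply \eqref{221} to the truncated path (using $a_{N-m_{N}}/a_{N}\to1$ and transferring the tail-oscillation bound through the fixed density $\phi$). One small inaccuracy: a lone macroscopic jump in the last $m_{N}$ steps is not ``absorbed by the time-reparametrization'' --- a Skorohod time change cannot absorb the spatial discrepancy between the frozen and the true path --- but this is harmless since, as you also observe, such a jump has probability $O(m_{N}/N)\to0$, which is the operative estimate.
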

\begin{remark}\label{T206}
The analogue of Proposition \ref{T205} was proved for the nearest-neighbor model in \cite{comets2006directed}. To extend it to the long-range model, we use the observation that under the Skorohod distance, $X_{t}^{N}$ and $X_{t}^{N-k}$ are close for fixed $k$ and large enough $N$.
\end{remark}
Applying Proposition \ref{T205}, we will then prove
\begin{theorem}\label{T207}
Assume that $\alpha\in(0,1]$ and weak disorder holds. Then, for all bounded continuous functions $F$ on the path space $D[0,1]$,
\begin{align}
\mathbf{E}_{N,\beta}^{\omega}[F((X_{t}^{N})_{t\in[0,1]})]&\overset{\mathbbm{P}}\to\mathbf{E}^{X}[F((X_{t})_{t\in[0,1]})]~~\mbox{as}~N\to\infty.\label{222}\\
\mathbf{E}_{\beta,\rm{mc}}^{\omega}[F((X_{t}^{N})_{t\in[0,1]})]&\overset{\mathbbm{P}}\to\mathbf{E}^{X}[F((X_{t})_{t\in[0,1]})]~~\mbox{as}~N\to\infty.\label{223}
\end{align}
\end{theorem}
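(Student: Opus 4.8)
The plan is to prove the Markov-chain statement \eqref{223} first, by an $L^{2}(\mathbbm{P})$ (second-moment) argument, and then to deduce the polymer statement \eqref{222} from it by a total-variation comparison. Because the limit $\mathbf{E}^{X}[F((X_{t})_{t\in[0,1]})]$ is a deterministic constant, call it $c$, to get convergence in $\mathbbm{P}$-probability it is enough to show that $\mathbbm{E}\big[\mathbf{E}_{\beta,\mathrm{mc}}^{\omega}[F((X_{t}^{N})_{t\in[0,1]})]\big]\to c$ and $\mathbbm{E}\big[(\mathbf{E}_{\beta,\mathrm{mc}}^{\omega}[F((X_{t}^{N})_{t\in[0,1]})])^{2}\big]\to c^{2}$, since then the $\mathbbm{P}$-variance of $\mathbf{E}_{\beta,\mathrm{mc}}^{\omega}[F(X^{N})]$ tends to $0$. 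The first (mean) limit is precisely the averaged invariance principle \eqref{221} of Proposition \ref{T205} applied to the bounded continuous functional $F$.

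For the second moment I would use that squaring an expectation under $\mathbf{P}_{\beta,\mathrm{mc}}^{\omega}$ amounts to integrating against the two-replica measure: with $(X^{N},\tilde{X}^{N})$ two copies that are independent given $\omega$,
\[
\big(\mathbf{E}_{\beta,\mathrm{mc}}^{\omega}[F(X^{N})]\big)^{2}=(\mathbf{E}_{\beta,\mathrm{mc}}^{\omega})^{\bigotimes2}\big[F(X^{N})\,F(\tilde{X}^{N})\big],
\]
so that $\mathbbm{E}[(\mathbf{E}_{\beta,\mathrm{mc}}^{\omega}[F(X^{N})])^{2}]=\mathbbm{E}(\mathbf{P}_{\beta,\mathrm{mc}}^{\omega})^{\bigotimes2}[F(X^{N})F(\tilde{X}^{N})]$. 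I would then establish a two-copy analogue of Proposition \ref{T205}, namely
\[
\mathbbm{E}(\mathbf{P}_{\beta,\mathrm{mc}}^{\omega})^{\bigotimes2}\big((X_{t}^{N},\tilde{X}_{t}^{N})_{t\in[0,1]}\in\cdot\big)\Rightarrow\mathbf{P}^{X}\otimes\mathbf{P}^{X}\quad\text{weakly as }N\to\infty,
\]
by following the same scheme as in the proof of Proposition \ref{T205} but with Lemma \ref{T204} in the role of Proposition \ref{T201}. The content here is that, although the two replicas share the environment $\omega$, in the weak disorder regime this coupling is asymptotically negligible and the two copies decouple, the limit being two \emph{independent} $\alpha$-stable processes; the identity \eqref{218} and the absolute continuity $\mathbbm{E}(\mathbf{P}_{\beta,\mathrm{mc}}^{\omega})^{\bigotimes2}\ll\mathbf{P}^{\bigotimes2}$ recorded in \eqref{219} are exactly what makes this go through. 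Since $F\otimes F$ is bounded and continuous on $D[0,1]\times D[0,1]$, this weak convergence yields $\mathbbm{E}[(\mathbf{E}_{\beta,\mathrm{mc}}^{\omega}[F(X^{N})])^{2}]\to c^{2}$, which together with the mean limit proves \eqref{223}.

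To pass from \eqref{223} to \eqref{222} I would show $\mathbf{E}_{N,\beta}^{\omega}[F(X^{N})]-\mathbf{E}_{\beta,\mathrm{mc}}^{\omega}[F(X^{N})]\to0$ in $L^{1}(\mathbbm{P})$ using Proposition \ref{T203}. Since $F(X^{N})$ depends on the whole path up to time $N$, I first replace it by the $\mathcal{F}_{N-k}$-measurable functional $F(X^{N-k})$. By Remark \ref{T206} the two rescaled paths are close in the Skorohod metric \eqref{121} for fixed $k$ and large $N$: the time- and space-rescalings satisfy $N/(N-k)\to1$ and $a_{N}/a_{N-k}\to1$ by regular variation, and the only real threat is a single macroscopic jump among the last $k$ increments, whose $\mathbf{P}$-probability is $O(k/N)\to0$; hence $\mathbf{P}(d(X^{N},X^{N-k})>\delta)\to0$, and Lemma \ref{T202} (together with Proposition \ref{T201} for the Markov-chain measure) transfers this negligibility to $\mathbbm{E}\mathbf{P}_{N,\beta}^{\omega}$ and $\mathbbm{E}\mathbf{P}_{\beta,\mathrm{mc}}^{\omega}$, so that, $F$ being bounded and continuous, the replacement errors are controlled by standard tightness. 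For the $\mathcal{F}_{N-k}$-measurable functional the definition \eqref{214} gives
\[
\big|\mathbf{E}_{N,\beta}^{\omega}[F(X^{N-k})]-\mathbf{E}_{\beta,\mathrm{mc}}^{\omega}[F(X^{N-k})]\big|\leq\frac{1}{2}\|F\|_{\infty}\,\big\|\mathbf{P}_{N,\beta}^{\omega}-\mathbf{P}_{\beta,\mathrm{mc}}^{\omega}\big\|_{\mathcal{F}_{N-k}},
\]
whose $\mathbbm{E}$-average is bounded, uniformly in $N$, by a quantity tending to $0$ as $k\to\infty$ thanks to Proposition \ref{T203}. Letting $N\to\infty$ for fixed $k$ and then $k\to\infty$ yields \eqref{222}.

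The step demanding the most care, and the genuinely long-range feature of the argument, is the Skorohod approximation $F(X^{N})\approx F(X^{N-k})$ in the transfer: the heavy tails allow a macroscopic jump in the last few steps, a complication entirely absent in the nearest-neighbor case, resolved here by the regular variation of $a_{N}$ together with Lemma \ref{T202}. The conceptually deepest ingredient, the decoupling of the two replicas in the second-moment step, is precisely where the weak-disorder hypothesis is used, and it is supplied by the refined two-copy machinery of Lemma \ref{T204}.
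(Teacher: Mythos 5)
Your proposal is correct and follows essentially the same route as the paper: the paper proves \eqref{223} by establishing the two-replica averaged invariance principle \eqref{235} (the proof of Proposition \ref{T205} repeated with Lemma \ref{T204} in place of Proposition \ref{T201}) and applying it to $G(x,\tilde{x})=(F(x)-\mathbf{E}^{X}[f])(F(\tilde{x})-\mathbf{E}^{X}[f])$, which is trivially equivalent to your choice $F\otimes F$ combined with the mean convergence \eqref{221}, and it deduces \eqref{222} exactly as you do, via the replacement $f_{N}\approx f_{N-k}$ in the Skorohod metric, Lemma \ref{T202}, and the total-variation estimate of Proposition \ref{T203}. One cosmetic remark: with the convention \eqref{214} the correct constant in your total-variation bound is $\|F\|_{\infty}$ rather than $\tfrac{1}{2}\|F\|_{\infty}$ for sign-changing $F$ (one may take $\tfrac{1}{2}\,\mathrm{osc}(F)$), which of course affects nothing in the limit argument.
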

\begin{proof}[Proof of Proposition \ref{T205}]
We first prove \eqref{221}. Since the path space $D[0,1]$ is separable, by \cite[Theorem 11.3.3]{dudley2002real}, it suffices to show that
\begin{equation}\label{224}
\lim\limits_{N\to\infty}\mathbbm{E}[\mathbf{E}_{\beta,\mbox{mc}}^{\omega}[F((X_{t}^{N})_{t\in[0,1]})]=\mathbf{E}^{X}[F((X_{t})_{t\in[0,1]})],~~\forall F\in\mbox{BL}(D[0,1]),
\end{equation}
where BL$(D[0,1])$ is the set of all the bounded Lipschitz functionals on $D[0,1]$. To simplify the notations, we denote $F((X_{t}^{N})_{t\in[0,1]})$ by $f_{N}$ and $F((X_{t})_{t\in[0,1]})$ by $f$.

Our first statement is that for any sequence $(N_{k})_{k\geq1}$, such that for all $k\geq1$, $\frac{N_{k+1}}{N_{k}}\geq\rho>1$, we have
\begin{equation}\label{225}
\frac{1}{n}\sum\limits_{k=1}^{n}f_{N_{k}}\overset{\mathbf{P}}\to\mathbf{E}^{X}[f],~~\mbox{as}~n\to\infty.
\end{equation}
To prove \eqref{225}, we start by observing that
\begin{equation}\label{226}
\begin{split}
&\mathbf{P}\left(\left|\frac{1}{n}\sum\limits_{k=1}^{n}f_{N_{k}}-\mathbf{E}^{X}[f]\right|>\epsilon\right)\\
\leq&\mathbf{P}\left(\left|\frac{1}{n}\sum\limits_{k=1}^{n}\left(f_{N_{k}}-\mathbf{E}[f_{N_{k}}]\right)\right|>
\frac{\epsilon}{2}\right)
+\mathbf{P}\left(\left|\frac{1}{n}\sum\limits_{k=1}^{n}\mathbf{E}[f_{N_{k}}]-\mathbf{E}^{X}[f]\right|>\frac{\epsilon}
{2}\right).
\end{split}
\end{equation}
The second term on the right-hand side vanishes as $n$ tends to infinity by the analogue of invariance principle for stable laws. For the first term,
\begin{equation}\label{227}
\begin{split}
&\mathbf{P}\left(\left|\frac{1}{n}\sum\limits_{k=1}^{n}\left(f_{N_{k}}-\mathbf{E}[f_{N_{k}}]\right)\right|>\frac{\epsilon}{2}\right)
\leq\frac{4}{n^{2}\epsilon^{2}}\mathbf{E}\left|\sum\limits_{k=1}^{n}\left(f_{N_{k}}-\mathbf{E}[f_{N_{k}}]
\right)\right|^{2}\\
\leq&\frac{4}{n^{2}\epsilon^{2}}\sum\limits_{k=1}^{n}\mathbf{E}\left(f_{N_{k}}-\mathbf{E}[f_{N_{k}}]\right)^{2}
+\frac{8}{n^{2}\epsilon^{2}}\sum\limits_{k=1}^{n}\sum\limits_{j=k+1}^{n}\left|\mathbf{E}\left[\left(f_{N_{k}}-\mathbf{E}[f_{N_{k}}]\right)
\left(f_{N_{j}}-\mathbf{E}[f_{N_{j}}]\right)\right]\right|.
\end{split}
\end{equation}
The first term on right hand side is bounded by $\mathcal{O}(\frac{1}{n})$, since $F$ is bounded. For the second term on the right hand side, by the method that was used in \cite[Page 99]{atlagh2000theoreme}, each term in the summation is bounded by $C(p)\left(\frac{a_{N_{k}}}{a_{N_{j}}}\right)^{p}$ for any $p<\alpha$ and then bounded by
$C(\delta)\rho^{-(\frac{1}{\alpha}-\delta)(j-k)}$ further for some $0<\delta<\frac{1}{\alpha}$ (see Potter bounds in \cite{bingham1989regular}), where $C(p)$ and $C(\delta)$ are constants only depending on $p$ and $\delta$ respectively (one can find the full details in \cite[Theorem 4.18]{jonsson2007almost}). Therefore, the summation in the second term is also bounded by $\mathcal{O}(\frac{1}{n})$. Combine \eqref{226} and \eqref{227}, we obtain \eqref{225}. By \eqref{211}, the convergence in \eqref{225} also holds in $\mathbbm{E}\mathbf{P}_{\beta,\mbox{mc}}^{\omega}$-probability.

Denote $E_{N}=\mathbbm{E}[\mathbf{E}_{\beta,\mbox{mc}}^{\omega}[f_{N}]]$. For any converging subsequence $E_{N_{k}}$, we can find a sub-subsequence $E_{N_{k_{j}}}$, such that $\inf\limits_{j}(N_{k_{j+1}}/N_{k_{j}})=\rho>1$, and then by \eqref{225} and bounded convergence theorem, $\lim\limits_{n\to\infty}\frac{1}{n}\sum\limits_{j=1}^{n}E_{N_{k_{j}}}=\mathbf{E}^{X}[f]$. Therefore we conclude that \eqref{224} holds.

Next we prove \eqref{220}. The basic idea is the same as the proof of \eqref{221}, we only need to prove that for all $F\in\mbox{BL}(D[0,1])$,
\begin{equation}\label{228}
\lim\limits_{N\to\infty}\mathbbm{E}[\mathbf{E}_{N,\beta}^{\omega}[F((X_{t}^{N})_{t\in[0,1]})]]=\mathbf{E}^{X}[F((X_{t})_{t\in[0,1]})].
\end{equation}
For $0\leq k\leq N$,
\begin{equation}\label{229}
\begin{split}
&\left|\mathbbm{E}\left[\mathbf{E}_{N,\beta}^{\omega}\left[f_{N}-\mathbf{E}^{X}[f]\right]\right]\right|\\
\leq&\mathbbm{E}\left[\mathbf{E}_{N,\beta}^{\omega}\left|f_{N}-f_{N-k}\right|\right]\\
&+\mathbbm{E}\left|\mathbf{E}_{N,\beta}^{\omega}[f_{N-k}]-\mathbf{E}_{\beta,\mbox{mc}}^{\omega}[f_{N-k}]\right|\\
&+\left|\mathbbm{E}\left[\mathbf{E}_{\beta,\mbox{mc}}^{\omega}[f_{N-k}]\right]-\mathbf{E}^{X}[f]\right|.
\end{split}
\end{equation}
For any fixed $k$, let $N$ tend to infinity, then by \eqref{224}, the last term vanishes. For the first term, denote $d((X_{t}^{N})_{t\in[0,1]},
(X_{t}^{N-k})_{t\in[0,1]})$ by $d(N,k)$, where $d(\cdot,\cdot)$ is the Skorohod metric on $D[0,1]$, which was introduced in \eqref{121}. Then for any $\delta>0$, we have
\begin{equation}\label{230}
\mathbbm{E}\left[\mathbf{E}_{N,\beta}^{\omega}\left|f_{N}-f_{N-k}\right|\right]\leq L\mathbbm{E}[\mathbf{E}_{N,\beta}^{\omega}[d(N,k)\mathbbm{1}_{d(N,k)\leq\delta}]]+2\left(\sup\limits_{x\in D[0,1]}|F(x)|\right)\mathbbm{E}
[\mathbf{E}_{N,\beta}^{\omega}[\mathbbm{1}_{d(N,k)>\delta}]],
\end{equation}
where $L$ is the Lipschitz norm of $F$. The first term on the right-hand side of \eqref{230} can be made sufficiently small by choosing $\delta$ sufficiently small. The expectation in the second term can be bounded by
\begin{equation}\label{231}
\mathbbm{E}\left[\mathbf{P}_{N,\beta}^{\omega}\left(\left\{\sup\limits_{1\leq j\leq N-k}\left|\frac{S_{j}}{a_{N}}-\frac{S_{j}}{a_{N-k}}\right|>\delta\right\}\bigcup\left\{\sup\limits_{1\leq j\leq k}\left|\frac{S_{N-k+j}}{a_{N}}-\frac{S_{N-k}}{a_{N-k}}\right|>\delta\right\}\right)\right],
\end{equation}
since Skorokhod distance allows us to align the jumps of two different c$\grave{\mbox{a}}$dl$\grave{\mbox{a}}$g functions. To be specific here, in \eqref{121}, we can choose $\lambda(t)=\frac{N-k}{N}$ on $\left[0,\frac{N-k-1}{N}\right]$ and linear on $\left[\frac{N-k-1}{N},1\right]$ to make the first $N-k-1$ jumps of both $X_{t}^{N}$ and $X_{t}^{N-k}$ occur at the same time, which gives an upper bound \eqref{231}. We observe that
\begin{equation}\label{232}
\begin{split}
&\mathbf{P}\left(\left\{\sup\limits_{1\leq j\leq N-k}\left|\frac{S_{j}}{a_{N}}-\frac{S_{j}}{a_{N-k}}\right|>\delta\right\}\bigcup\left\{\sup\limits_{1\leq j\leq k}\left|\frac{S_{N-k+j}}{a_{N}}-\frac{S_{N-k}}{a_{N-k}}\right|>\delta\right\}\right)\\
\leq&\mathbf{P}\left(\sup\limits_{1\leq j\leq N-k}\left|\frac{S_{j}}{a_{N-k}}\right|>\frac{\delta}{|1-\frac{a_{N-k}}{a_{N}}|}\right)
+\mathbf{P}\left(\left|\frac{S_{N-k}}{a_{N-k}}\right|>\frac{\delta}{2|1-\frac{a_{N-k}}{a_{N}}|}\right)\\
&+\mathbf{P}\left(\sup\limits_{1\leq j\leq k}\left|\frac{S_{N-k+j}-S_{N-k}}{a_{N}}\right|>\frac{\delta}{2}\right).
\end{split}
\end{equation}
Note that $\frac{a_{N-k}}{a_{N}}\to1$, as $N\to\infty$. By weak convergence of $a_{N-k}^{-1}S_{N-k}$, the continuous mapping theorem and the fact that $\sup\limits_{0\leq t\leq 1}|X_{t}|<\infty$ a.s., the first two terms on the right-hand side of \eqref{232} tend to $0$ as $N$ tends to infinity. The last term also tends to $0$, since $S_{N-k+j}-S_{N-k}\overset{d}{=}S_{j}$. Denote
\begin{equation}\label{233}
\left\{\sup\limits_{1\leq j\leq N-k}\left|\frac{S_{j}}{a_{N}}-\frac{S_{j}}{a_{N-k}}\right|>\delta\right\}\bigcup\left\{\sup\limits_{1\leq j\leq k}\left|\frac{S_{N-k+j}}{a_{N}}-\frac{S_{N-k}}{a_{N-k}}\right|>\delta\right\}
\end{equation}
by $A_{N,k}$ for $N>k$. We have $\lim\limits_{N\to\infty}\mathbf{P}(A_{N,k})=0$. Then by Lemma \ref{T202}, \eqref{231} tends to $0$ as $N$ tends to infinity. Therefore, the first term on the right-hand side of \eqref{229} vanishes.

Finally, the second term on the right-hand side of \eqref{229} is bounded by\\ $\sup\limits_{N}\mathbbm{E}\left|\mathbf{E}_{N+k,\beta}^{\omega}[f_{N}]-\mathbf{E}_{\beta,\mbox{mc}}^{\omega}[f_{N}]\right|$. Note that $f_{N}$ is measurable w.r.t. $\mathcal{F}_{N}$. Hence
\begin{equation}\label{234}
\sup\limits_{N}\mathbbm{E}\left|\mathbf{E}_{N+k,\beta}^{\omega}[f_{N}]-\mathbf{E}_{\beta,\mbox{mc}}^{\omega}[f_{N}]\right|
\leq\left(\sup\limits_{x\in D[0,1]}|F(x)|\right)\sup\limits_{N}\mathbbm{E}\left[||\mathbf{P}_{N+k,\beta}^{\omega}-\mathbf{P}_{\beta,\mbox{mc}}^{\omega}||
_{\mathcal{F}_{N}}\right].
\end{equation}
Let $k$ tend to infinity and apply Proposition \ref{T203}. The right-hand side of \eqref{234} tends to $0$. This completes the proof of \eqref{220}.
\end{proof}
\begin{proof}[Proof of Theorem \ref{T207}]
By the same procedure as in the proof of \eqref{221}, but using Lemma \ref{T204} instead of Proposition \ref{T201}, for any $G\in\mbox{C}_{b}(D[0,1]\times D[0,1])$, we have
\begin{equation}\label{235}
\lim\limits_{N\to\infty}\mathbbm{E}\left[(\mathbf{E}_{\beta,\rm{mc}}^{\omega})^{\bigotimes2}[G((X_{t}^{N})_{t\in[0,1]},(\tilde{X}_{t}^{N})_{t\in[0,1]})]\right]
=(\mathbf{E}^{X})^{\bigotimes2}[G((X_{t})_{t\in[0,1]},(\tilde{X}_{t})_{t\in[0,1]})].
\end{equation}
If we choose $G(x,\tilde{x})=(F(x)-\mathbf{E}^{X}[f])(F(\tilde{x})-\mathbf{E}^{X}[f])$, then it follows that
\begin{equation}\label{236}
\lim\limits_{N\to\infty}\mathbbm{E}\left[\left(\mathbf{E}_{\beta,\rm{mc}}^{\omega}[f_{N}-\mathbf{E}^{X}[f]]\right)^{2}\right]=0,
\end{equation}
which proves \eqref{223}. To prove (\ref{218}), it suffices to show that for all $F\in\mbox{C}_{b}(D[0,1])$,
\begin{equation}\label{237}
\lim\limits_{N\to\infty}\mathbbm{E}\left|\mathbf{E}_{N,\beta}^{\omega}[f_{N}-\mathbf{E}^{X}[f]]\right|=0.
\end{equation}
The proof of \eqref{237} is the same as that of \eqref{228}.
\end{proof}
\section{Proof of Proposition \ref{T113} and Theorem \ref{T115}}
\subsection{Proof of Proposition \ref{T113}}
We will first give some equivalent conditions for the recurrence of heavy-tailed random walks, which will be used later.
\begin{proposition}\label{T301}
Suppose that $S=(S_{n})_{n\leq0}$ is a heavy-tailed random walk satisfying \eqref{103} with $b_{n}=0$.

$(\rm{i})~S$ is recurrent if and only if $\sum\limits_{n=1}^{\infty}\frac{1}{a_{n}}=\infty$, where $a_{n}=n^{\frac{1}{\alpha}}l(n)$ for some slowly varying function $l(n)$, is the scaling factor in \eqref{103}.

$(\rm{ii})~$If $\mathbf{P}$ is in the domain of attraction of the Cauchy distribution, i.e., $\alpha=1$, then $S$ is recurrent if and only if $\sum\limits_{n=1}^{\infty}\frac{1}{nL(n)}=\infty$, where $L(n)$ is some slowly varying function defined in $\eqref{101}$.
\end{proposition}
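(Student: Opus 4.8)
The plan is to reduce the recurrence question to the divergence of a Green's-function series and then use the local limit theorem for stable laws to compare that series with $\sum_{n}1/a_{n}$; part (ii) will then follow from part (i) by computing $a_{n}$ explicitly from the tail. First I would invoke the classical criterion that the random walk $S$, viewed as an irreducible Markov chain on the subgroup of $\mathbbm{Z}$ generated by its increments, is recurrent if and only if the origin is visited infinitely often, equivalently if and only if the expected number of returns is infinite:
\begin{equation*}
S \text{ is recurrent} \iff \sum_{n=0}^{\infty}\mathbf{P}(S_{n}=0)=\infty .
\end{equation*}
This is just the statement that the number of visits to $0$ is a geometric random variable with finite mean in the transient case and infinite mean in the recurrent case, and it reduces the whole problem to estimating the single-site mass $\mathbf{P}(S_{n}=0)$.

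The second step, and the heart of the matter, is the lattice local limit theorem for distributions in the domain of attraction of an $\alpha$-stable law (Gnedenko--Kolmogorov, Ibragimov--Linnik). Writing $h$ for the span of the increment distribution and $g_{\alpha}$ for the density of the stable limit $X_{\alpha}$, this theorem gives
\begin{equation*}
a_{n}\,\mathbf{P}(S_{n}=0)\longrightarrow h\,g_{\alpha}(0)
\end{equation*}
along the times $n$ for which $\mathbf{P}(S_{n}=0)>0$, where $g_{\alpha}(0)>0$ because every stable density is continuous and strictly positive. Hence $\mathbf{P}(S_{n}=0)\asymp 1/a_{n}$ on that progression. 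Since $a_{n}=n^{1/\alpha}l(n)$ is regularly varying, restricting the sum to an arithmetic progression of times changes it only by a bounded factor, so $\sum_{n}\mathbf{P}(S_{n}=0)$ and $\sum_{n}1/a_{n}$ converge or diverge together; combined with the Green's-function criterion this proves part (i). (An alternative route is the Chung--Fuchs characteristic-function criterion, using $1-\hat{\mu}(t)\asymp |t|^{\alpha}L(1/|t|)$ near $t=0$, but the local-limit route keeps the constants transparent.)

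For part (ii) I would compute the norming sequence directly from \eqref{101}. With $\alpha=1$ and $\mathbf{P}(|S_{1}|\geq n)=n^{-1}L(n)$, the sequence $a_{n}$ is characterised by $n\,\mathbf{P}(|S_{1}|>a_{n})\to c$ for some $c>0$, i.e.\ $a_{n}/L(a_{n})\sim n/c$. Plugging in the ansatz $a_{n}\sim \tfrac{1}{c}nL(n)$ and using slow variation, which gives $L\bigl(nL(n)\bigr)\sim L(n)$ since $\log(nL(n))/\log n\to 1$, verifies this relation and yields $1/a_{n}\sim c/(nL(n))$. Therefore $\sum_{n}1/a_{n}=\infty$ if and only if $\sum_{n}1/(nL(n))=\infty$, so part (ii) is immediate from part (i).

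The main obstacle is obtaining the sharp pointwise estimate $\mathbf{P}(S_{n}=0)\asymp 1/a_{n}$: weak convergence of $S_{n}/a_{n}$ is not enough, because we need the mass at a single lattice site rather than the probability of an interval, and one must track the lattice span $h$ and any time-periodicity so that the restriction to an arithmetic progression of return times is genuinely harmless. Once the local limit theorem is in hand, the remaining steps are routine series comparisons via regular variation and Potter's bounds.
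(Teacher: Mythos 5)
Your part (i) is essentially the paper's own argument: reduce recurrence to divergence of $\sum_{n}\mathbf{P}(S_{n}=0)$, apply Gnedenko's local limit theorem to get $\mathbf{P}(S_{n}=0)\sim h\,g_{\alpha}(0)/a_{n}$ along the return progression, and use regular variation of $a_{n}$ to see that restricting to that progression does not affect divergence. One caveat: your claim that \emph{every} stable density is continuous and strictly positive is false. For $\alpha\in(0,1)$ a totally asymmetric stable law is supported on a half-line, so $g_{\alpha}(0)=0$; worse, the increments may be one-sided, in which case $\mathbf{P}(S_{n}=0)=0$ for all $n\geq1$ and the span/progression bookkeeping is vacuous. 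The paper dispatches $\alpha\in(0,1)$ separately: there the walk is always transient and $\sum_{n}1/a_{n}=\sum_{n}n^{-1/\alpha}l(n)^{-1}<\infty$ since $1/\alpha>1$, so the equivalence holds trivially, and the local limit argument is only run for $\alpha\in[1,2]$, where the limit law has full support and $g_{\alpha}(0)>0$. You need the same case split.

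Part (ii) contains a genuine gap. You derive $a_{n}\sim\frac{1}{c}nL(n)$ from $n\mathbf{P}(|S_{1}|>a_{n})\to c$ by asserting $L(nL(n))\sim L(n)$ ``since $\log(nL(n))/\log n\to1$''. Slow variation gives $L(\lambda x)/L(x)\to1$ uniformly for $\lambda$ in compact subsets of $(0,\infty)$; it does \emph{not} give $L(x_{n}')\sim L(x_{n})$ merely because $\log x_{n}'/\log x_{n}\to1$. Concretely, take $L(x)=\exp\bigl((\log x)^{\gamma}\bigr)$, which is slowly varying for $\gamma\in(0,1)$: for $\gamma=\tfrac12$ one finds $L(nL(n))/L(n)\to e^{1/2}\neq1$, and for $\gamma\in(\tfrac12,1)$, writing $a_{n}=nM(n)$, the self-consistency relation $M(n)\sim L(nM(n))$ forces $\log M(n)-\log L(n)\sim\gamma(\log n)^{2\gamma-1}\to\infty$, so $a_{n}/(nL(n))\to\infty$ and your ansatz fails by an unbounded slowly varying factor, not just a constant. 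The statement of (ii) is nevertheless true, and the paper proves it without ever identifying $a_{n}$ pointwise: it extends $a_{n}$ to a monotone differentiable regularly varying function $a(t)=tl(t)$ with $a'(t)\sim a(t)/t$ (\cite[Proposition 1.5.8]{bingham1989regular}), uses only the defining relation $a_{n}\sim nL(a_{n})$, and performs the substitution $s=a(t)$ in the integral test, yielding $\int_{1}^{\infty}\frac{dt}{a(t)}=\infty\Leftrightarrow\int_{1}^{\infty}\frac{dt}{tL(a(t))}=\infty\Leftrightarrow\int_{1}^{\infty}\frac{ds}{sL(s)}=\infty$, after which part (i) applies. If you insist on your route, you would need the de Bruijn conjugate machinery (\cite[Theorem 1.5.13]{bingham1989regular}) in place of the false equivalence $L(nL(n))\sim L(n)$.
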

\begin{proof}
(i)~For $\alpha\in(0,1)$, the random walk $S$ is always transient (see Remark \ref{T102}), and $\sum\limits_{n=1}^{\infty}a_{n}=\sum\limits_{n=1}^{\infty}\frac{1}{n^{\frac{1}{\alpha}}l(n)}<\infty$ since $\frac{1}{\alpha}>1$. Hence, the result is obvious.

For $\alpha\in[1,2]$, $S_{1}$ should take both positive and negative values. We can set the possible smallest return time $k$ by the greatest common divisor of $\{n\in\mathbbm{N}:\mathbf{P}(S_{n}=0)>0\}$, which is finite. By Gnedenko's local limit theorem (see \cite[Theorem 8.4.1]{bingham1989regular}), we have
\begin{equation}\label{301}
\mathbf{P}(S_{nk}=0)\sim\frac{g_{\alpha}(0)h}{a_{nk}}~\mbox{as}~n\to\infty,
\end{equation}
where $h$ is the largest integer such that $\{z+h\mathbbm{Z}\}$ contains all the values of $S_{1}$ for some integer $z$ and $g_{\alpha}$ is the density function of the limiting stable distribution $X_{\alpha}$. Note that $S$ is recurrent if and only if $\sum\limits_{n=0}^{\infty}\mathbf{P}(S_{nk}=0)=\sum\limits_{n=0}^{\infty}\mathbf{P}(S_{n}=0)=\infty$, and $\sum\limits_{m=0}^{k-1}\frac{1}{a_{(n-1)k+m}}$ has the same order of $\frac{k}{a_{nk}}$ as $n\to\infty$ by Uniform Convergence Theorem of slowly varying function (\cite[Theorem 1.2.1]{bingham1989regular}), then the result follows by \eqref{301}.

(ii) Again, by \cite[Theorem 1.2.1]{bingham1989regular}, for any slowly varying function $L(x)$, there exist two constants $C_{1}$ and $C_{2}$, such that $C_{1}<\frac{L(x)}{L(n)}<C_{2}$, for $x\in[n, n+1)$. Hence, $\sum\limits_{n=1}^{\infty}\frac{1}{nL(n)}=\infty\Leftrightarrow\int_{1}^{\infty}\frac{dt}{tL(t)}=\infty$.

By \cite[Proposition 1.3.4]{bingham1989regular}, we can extend $a_{n}$ to a regularly varying function $a(t)=tl(t)$ for $t\in(0,\infty)$ and further assume that $a(t)$ is non-decreasing and differentiable. By \cite[Proposition 1.5.8]{bingham1989regular}, $\frac{d}{dt}a(t)\sim\frac{a(t)}{t}$. Note that $a_{n}\sim nL(a_{n})$ since $a_{n}$ can be chosen by $n\mathbf{P}(|S_{1}|>n)\sim1$ (see \cite[Chapter 7]{newell1955limit}), we then obtain
\begin{equation}\label{302}
\int_{1}^{\infty}\frac{dt}{a(t)}=\infty\Leftrightarrow\int_{1}^{\infty}\frac{dt}{tL(a(t))}=\infty\Leftrightarrow\int_{1}^{\infty}
\frac{ds}{sL(s)}=\infty,
\end{equation}
where the last equivalence follows from the change of variables $s=a(t)$. Now the result holds by part (i).
\end{proof}
\begin{remark}\label{T302}
By \cite[Theorem 8.3.4]{chung2001course}, a random walk $S$ whose expectation $\mathbf{E}[S_{1}]$ exists is recurrent if and only if $\mathbf{E}[S_{1}]=0$. For $\alpha\in(1,2]$, since $S_{1}-\mathbf{E}[S_{1}]$ has expectation 0 and $\sum\limits_{n=1}^{\infty}\frac{1}{a_{n}}=\infty$ holds always, hence, setting $b_{n}=0$ does not reduce much generality.
\end{remark}
To prove Proposition \ref{T113}, we apply the fractional moment method as in the proof of \cite[Theorem 2.3(b)]{comets2003directed}. We cite two lemmas \cite[Lemma 3.1, Lemma 4.2]{comets2003directed} here without proof.
\begin{lemma}\label{T303}
Let $(\xi_{i})_{i\geq1}$ be positive, non-constant i.i.d. random variables such that $\mathbbm{E}[\xi_{1}]=1$ and $\mathbbm{E}[\xi_{1}^{3}+\log^{2}\xi_{1}]<\infty$. For $(\alpha_{i})_{i\geq1}\in[0,1]^{\mathbbm{N}}$ such that $\sum\limits_{i=1}^{\infty}\alpha_{i}=1$, define a centered random variable $U>-1$ by $U=\sum\limits_{i=1}^{\infty}\alpha_{i}\xi_{i}-1$. Then there exists a constant $c\in(0,\infty)$, independent of $(\alpha_{i})_{i\geq1}$, such that
\begin{equation}\label{303}
\frac{1}{c}\sum\limits_{i=1}^{\infty}\alpha_{i}^{2}\leq\mathbbm{E}\left[\frac{U^{2}}{2+U}\right].
\end{equation}
\end{lemma}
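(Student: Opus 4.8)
The plan is to reduce \eqref{303} to a statement about the ordinary second and third moments of $U$, and then control the large values of $U$ by truncation. Writing $\eta_i:=\xi_i-1$, the hypotheses $\sum_i\alpha_i=1$ and $\mathbbm{E}[\xi_i]=1$ give $U=\sum_{i\geq1}\alpha_i\eta_i$, a weighted sum of i.i.d.\ centered variables, so that $\mathbbm{E}[U]=0$ and, by independence, $\mathbbm{E}[U^2]=\sigma^2\sum_{i\geq1}\alpha_i^2$, where $\sigma^2:=\mathrm{Var}(\xi_1)\in(0,\infty)$ (finite and strictly positive because $\xi_1$ is non-constant with $\mathbbm{E}[\xi_1^3]<\infty$). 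Thus $\sum_i\alpha_i^2=\sigma^{-2}\mathbbm{E}[U^2]$, and the claim is equivalent to a uniform lower bound $\mathbbm{E}[U^2/(2+U)]\geq c'\,\mathbbm{E}[U^2]$. The essential difficulty is that $u^2/(2+u)\approx u$ rather than $u^2$ for large $u$, so no pointwise inequality $u^2/(2+u)\geq c'u^2$ is available; I must instead show that the contribution of large values of $U$ to $\mathbbm{E}[U^2]$ is a controllable fraction, \emph{uniformly} in the weights $(\alpha_i)$.

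First I would truncate. For $-1<U\leq K$ we have $2+U\leq 2+K$, hence $U^2/(2+U)\geq U^2/(2+K)$, and therefore
\begin{equation}
\mathbbm{E}\left[\frac{U^2}{2+U}\right]\geq\frac{1}{2+K}\,\mathbbm{E}\left[U^2\mathbbm{1}_{\{U\leq K\}}\right]=\frac{1}{2+K}\Big(\mathbbm{E}[U^2]-\mathbbm{E}\left[U^2\mathbbm{1}_{\{U>K\}}\right]\Big).
\end{equation}
It then suffices to choose $K$, depending only on the law of $\xi_1$, so large that $\mathbbm{E}[U^2\mathbbm{1}_{\{U>K\}}]\leq\tfrac12\mathbbm{E}[U^2]$ for every admissible weight sequence; this yields \eqref{303} with $c=2(2+K)/\sigma^2$.

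The key step is the uniform tail estimate, and it is here that the third-moment hypothesis enters. By Markov's inequality $\mathbbm{E}[U^2\mathbbm{1}_{\{U>K\}}]\leq K^{-1}\mathbbm{E}[|U|^3]$, so I need $\mathbbm{E}[|U|^3]\leq C\,\mathbbm{E}[U^2]$ with $C$ independent of $(\alpha_i)$. This follows from Rosenthal's inequality for the centered sum $U=\sum_i\alpha_i\eta_i$: for the exponent $p=3$ there is a universal constant $C_3$ with
\begin{equation}
\mathbbm{E}[|U|^3]\leq C_3\left[\Big(\sum_i\alpha_i^2\,\mathbbm{E}[\eta_1^2]\Big)^{3/2}+\sum_i\alpha_i^3\,\mathbbm{E}[|\eta_1|^3]\right].
\end{equation}
Since $\alpha_i\in[0,1]$ and $\sum_i\alpha_i=1$ force $\sum_i\alpha_i^2\leq1$, we get $(\sum_i\alpha_i^2)^{3/2}\leq\sum_i\alpha_i^2$ and likewise $\sum_i\alpha_i^3\leq\sum_i\alpha_i^2$, so $\mathbbm{E}[|U|^3]\leq C_3(\sigma^3+\mathbbm{E}[|\eta_1|^3])\sum_i\alpha_i^2=C_*\,\mathbbm{E}[U^2]$ with $C_*:=C_3(\sigma^3+\mathbbm{E}[|\eta_1|^3])/\sigma^2$ independent of the weights. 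Taking $K\geq 2C_*$ then gives $\mathbbm{E}[U^2\mathbbm{1}_{\{U>K\}}]\leq\tfrac12\mathbbm{E}[U^2]$ and closes the argument.

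The main obstacle is precisely the \emph{uniformity} over all weight sequences: a plain second-moment truncation cannot bound the large-$U$ tail uniformly, and it is Rosenthal's inequality — whose constant depends only on the exponent $p=3$, not on the number or sizes of the summands — combined with the elementary bounds $\sum_i\alpha_i^3\leq\sum_i\alpha_i^2\leq1$ that supplies it. I note that only $\mathbbm{E}[\xi_1^3]<\infty$ is used for \eqref{303}; the condition $\mathbbm{E}[\log^2\xi_1]<\infty$ is required for the companion upper-bound statement quoted alongside this lemma rather than for the inequality above.
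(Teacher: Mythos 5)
Your proof is correct, but there is a wrinkle in the comparison: the paper never proves Lemma \ref{T303} at all. It is quoted from \cite[Lemma 3.1]{comets2003directed} explicitly ``without proof,'' and the only in-paper addition is Remark \ref{T304}, which notes that the passage from finite weight sequences $(\alpha_i)_{1\le i\le n}$ to countable ones is routine and that $U$ is well defined by monotone convergence. So there is no in-paper derivation to compare against line by line, and your write-up supplies a genuine self-contained argument. On its merits, the skeleton — bound $U^2/(2+U)\ge U^2/(2+K)$ on $\{U\le K\}$, control the tail by Markov, and establish a weight-uniform bound $\mathbbm{E}[|U|^3]\le C_*\mathbbm{E}[U^2]$ — is exactly right, and your use of Rosenthal's inequality does deliver the uniformity, since its constant depends only on $p=3$ and the elementary bounds $\sum_i\alpha_i^3\le\sum_i\alpha_i^2\le1$ do the rest. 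It is, however, heavier machinery than the problem requires: because the $\eta_i=\xi_i-1$ are independent and centered, every mixed term in the expansion of $U^3$ has a factor $\mathbbm{E}[\eta_j]=0$, giving the exact identity $\mathbbm{E}[U^3]=\mathbbm{E}[\eta_1^3]\sum_i\alpha_i^3$ for the partial sums; and since $U>-1$ forces $|U|\le1$ on $\{U<0\}$, one has $\mathbbm{E}[|U|^3\mathbbm{1}_{\{U<0\}}]\le\mathbbm{E}[U^2]$, whence
\begin{equation*}
\mathbbm{E}[|U|^{3}]\;\le\;\mathbbm{E}[|\eta_{1}|^{3}]\sum_{i}\alpha_{i}^{3}+2\,\mathbbm{E}[U^{2}]\;\le\;\Bigl(\frac{\mathbbm{E}[|\eta_{1}|^{3}]}{\sigma^{2}}+2\Bigr)\mathbbm{E}[U^{2}],
\end{equation*}
the same conclusion by a two-line computation. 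One point you should patch, though it is a formality rather than a gap: Rosenthal (or the identity above) is a statement about finite sums, so apply it to $U_n=\sum_{i\le n}\alpha_i\eta_i$ and pass to the limit by Fatou, using that $U_n\to U$ a.s.\ (Kolmogorov's two-series theorem, or the monotone-convergence observation in Remark \ref{T304}); similarly the identity $\mathbbm{E}[U^2]=\sigma^2\sum_i\alpha_i^2$ for the infinite series requires the $L^2$-convergence of $U_n$, which holds because $\sum_i\alpha_i^2\le1$. Your closing observation is also accurate: the hypothesis $\mathbbm{E}[\log^2\xi_1]<\infty$ plays no role in \eqref{303} and is carried along because the cited lemma in \cite{comets2003directed} contains companion estimates, involving $\log(1+U)$, that do use it.
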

\begin{remark}\label{T304}
In \cite{comets2003directed}, the authors considered sequences $(\alpha_{i})_{1\leq i\leq n}$ for any finite $n$. It can be seen that the proof for a countable sequence $(\alpha_{i})_{i\geq1}$ follows the same lines as that for finite $(\alpha_{i})_{1\leq i\leq n}$. Note that $U$ is a well defined random variable by monotone convergence theorem.
\end{remark}
\begin{lemma}\label{T305}
Recall the overlap $I_{N}$ from \eqref{109}. For $\theta\in[0,1]$ and $\Lambda\in\mathbbm{Z}$,
\begin{equation}\label{304}
\mathbbm{E}[(\hat{Z}_{N-1,\beta}^{\omega})^{\theta}I_{N}]\geq\frac{1}{|\Lambda|}\mathbbm{E}[(\hat{Z}_{N-1,\beta}^{\omega})^{\theta}]-\frac{2}{|\Lambda|}
\mathbf{P}(S_{N}\notin\Lambda)^{\theta}.
\end{equation}
\end{lemma}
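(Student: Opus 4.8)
The plan is to reduce the lemma to a Cauchy--Schwarz estimate for the endpoint mass of the polymer together with a single ``change-of-measure'' bound controlling the error it produces. Write $\mu_N(x):=\mathbf{P}_{N-1,\beta}^{\omega}(S_N=x)$ for the law of the endpoint under the polymer measure at time $N-1$ (with the $N$-th step still governed by the free walk). Since $S^1$ and $S^2$ are independent copies under $(\mathbf{P}_{N-1,\beta}^{\omega})^{\bigotimes2}$, we have $I_N=\sum_{x\in\mathbbm{Z}}\mu_N(x)^2$. Restricting the sum to the finite set $\Lambda$ and applying Cauchy--Schwarz gives $\sum_{x\in\Lambda}\mu_N(x)^2\geq|\Lambda|^{-1}\bigl(\sum_{x\in\Lambda}\mu_N(x)\bigr)^2=|\Lambda|^{-1}\mathbf{P}_{N-1,\beta}^{\omega}(S_N\in\Lambda)^2$. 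Setting $p:=\mathbf{P}_{N-1,\beta}^{\omega}(S_N\notin\Lambda)$ and using $(1-p)^2\geq1-2p$, I obtain the pointwise lower bound $I_N\geq|\Lambda|^{-1}(1-2p)$.

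Multiplying by $(\hat{Z}_{N-1,\beta}^{\omega})^{\theta}$ and taking $\mathbbm{E}$ then yields
\begin{equation*}
\mathbbm{E}[(\hat{Z}_{N-1,\beta}^{\omega})^{\theta}I_N]\geq\frac{1}{|\Lambda|}\mathbbm{E}[(\hat{Z}_{N-1,\beta}^{\omega})^{\theta}]-\frac{2}{|\Lambda|}\mathbbm{E}[(\hat{Z}_{N-1,\beta}^{\omega})^{\theta}\,p],
\end{equation*}
so the entire problem reduces to showing $\mathbbm{E}[(\hat{Z}_{N-1,\beta}^{\omega})^{\theta}\,p]\leq\mathbf{P}(S_N\notin\Lambda)^{\theta}$. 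This error estimate is where I expect the real work to lie. To handle it I would introduce the restricted normalized partition function $Y:=\mathbf{E}\bigl[\exp\bigl(\sum_{n=1}^{N-1}(\beta\omega_{n,S_n}-\lambda(\beta))\bigr)\mathbbm{1}_{\{S_N\notin\Lambda\}}\bigr]$; a direct rewriting using $Z_{N-1,\beta}^{\omega}=e^{(N-1)\lambda(\beta)}\hat{Z}_{N-1,\beta}^{\omega}$ shows $(\hat{Z}_{N-1,\beta}^{\omega})^{\theta}\,p=(\hat{Z}_{N-1,\beta}^{\omega})^{\theta-1}Y$.

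Two features of $Y$ close the argument. First, $0\leq Y\leq\hat{Z}_{N-1,\beta}^{\omega}$, since dropping the indicator recovers the full normalized partition function; because $\theta-1\leq0$, this forces the pointwise inequality $(\hat{Z}_{N-1,\beta}^{\omega})^{\theta-1}Y\leq Y^{\theta}$ (both sides being $0$ when $Y=0$). Second, by Fubini and the independence of $\omega$ from $S$, for each fixed path $\mathbbm{E}\bigl[\exp\bigl(\sum_{n=1}^{N-1}(\beta\omega_{n,S_n}-\lambda(\beta))\bigr)\bigr]=1$, whence $\mathbbm{E}[Y]=\mathbf{P}(S_N\notin\Lambda)$. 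Combining these with Jensen's inequality for the concave map $x\mapsto x^{\theta}$ gives $\mathbbm{E}[(\hat{Z}_{N-1,\beta}^{\omega})^{\theta-1}Y]\leq\mathbbm{E}[Y^{\theta}]\leq\mathbbm{E}[Y]^{\theta}=\mathbf{P}(S_N\notin\Lambda)^{\theta}$, which is precisely the required bound. I note that this whole scheme is insensitive to the increment law of $S$, so no heavy-tail input is needed here; the step distribution only enters later, when one feeds the bound into a concrete choice of $\Lambda$ and estimates $\mathbf{P}(S_N\notin\Lambda)$.
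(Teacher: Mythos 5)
Your proof is correct, and it is essentially the standard argument for this lemma: the paper itself states Lemma \ref{T305} without proof, citing \cite[Lemma 4.2]{comets2003directed}, and your three steps (Cauchy--Schwarz on the endpoint mass over $\Lambda$ with $(1-p)^2\geq 1-2p$, the identity $(\hat{Z}_{N-1,\beta}^{\omega})^{\theta}p=(\hat{Z}_{N-1,\beta}^{\omega})^{\theta-1}Y$ with $Y\leq\hat{Z}_{N-1,\beta}^{\omega}$ and $\theta-1\leq0$, then $\mathbbm{E}[Y^{\theta}]\leq\mathbbm{E}[Y]^{\theta}=\mathbf{P}(S_{N}\notin\Lambda)^{\theta}$ by Jensen and the martingale property) reproduce precisely the argument of that reference. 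Your closing observation is also accurate: the bound is independent of the increment law, which is exactly why the paper can reuse it verbatim for the long-range walk, with the heavy-tail structure entering only through the choice of $\Lambda_{N}=(-b_{N},b_{N})$ in the proof of Proposition \ref{T113}.
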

\begin{proof}[Proof of Proposition \ref{T113}]
We will show $\lim\limits_{N\to\infty}\mathbbm{E}[(\hat{Z}_{N,\beta}^{\omega})^{\theta}]=0$ for some $\theta\in(0,1)$ via a recursive inequality between $\mathbbm{E}[(\hat{Z}_{N,\beta}^{\omega})^{\theta}]$ and $\mathbbm{E}[(\hat{Z}_{N-1,\beta}^{\omega})^{\theta}]$.

We first establish the connection between $\hat{Z}_{N,\beta}^{\omega}$ and $\hat{Z}_{N-1,\beta}^{\omega}$ by writing
\begin{equation}\label{305}
\frac{\hat{Z}_{N,\beta}^{\omega}}{\hat{Z}_{N-1,\beta}^{\omega}}=U_{N,\beta}^{\omega}+1,
\end{equation}
where it can be seen that
\begin{equation}\label{306}
U_{N,\beta}^{\omega}=\mathbf{E}_{N-1,\beta}^{\omega}[\exp(\beta\omega_{N,S_{N}}-\lambda(\beta))]-1.
\end{equation}
Therefore, conditionally on $\mathcal{G}_{N-1}$, which is the $\sigma$-field generated by $(\omega_{i,x})_{0\leq i\leq N-1, x\in\mathbbm{Z}}$, $U_{N}$ satisfies the definition of $U$ in Lemma \ref{T303}. Then we have
\begin{equation}\label{307}
\mathbbm{E}[(\hat{Z}_{N,\beta}^{\omega})^{\theta}|\mathcal{G}_{N-1}]=(\hat{Z}_{N-1,\beta}^{\omega})^{\theta}\mathbbm{E}
[(U_{N,\beta}^{\omega}+1)^{\theta}|\mathcal{G}_{N-1}].
\end{equation}
To deal with the right-hand side of \eqref{307}, we define an auxiliary function. Assume $\theta\in(0,1)$. Set $f:(-1,\infty)\mapsto[0,\infty)$ by
\begin{equation}\label{308}
f(u)=1+\theta u-(1+u)^{\theta}.
\end{equation}
It is easy to see that there exist $c_{1},c_{2}\in(0,\infty)$ such that for all $u\in(-1,\infty)$, we have
\begin{equation}\label{309}
\frac{c_{1}u^{2}}{2+u}\leq f(u)\leq c_{2}u^{2}.
\end{equation}
Notice that the left-hand side of \eqref{309} has the form of the right hand side of \eqref{303}.

Then
\begin{equation}\label{310}
\begin{split}
&(\hat{Z}_{N-1,\beta}^{\omega})^{\theta}\mathbbm{E}
[(U_{N,\beta}^{\omega}+1)^{\theta}|\mathcal{G}_{N-1}]\\
=&(\hat{Z}_{N-1,\beta}^{\omega})^{\theta}\mathbbm{E}[1+\theta U_{N,\beta}^{\omega}-f(U_{N,\beta}^{\omega})|\mathcal{G}_{N-1}]\\
=&(\hat{Z}_{N-1,\beta}^{\omega})^{\theta}-(\hat{Z}_{N-1,\beta}^{\omega})^{\theta}\mathbbm{E}[f(U_{N,\beta}^{\omega})|\mathcal{G}_{N-1}]\\
\leq&(\hat{Z}_{N-1,\beta}^{\omega})^{\theta}-(\hat{Z}_{N-1,\beta}^{\omega})^{\theta}\mathbbm{E}\left[\frac{c_{1}(U_{N,\beta}^{\omega})^{2}}
{2+U_{N,\beta}^{\omega}}\big|\mathcal{G}_{N-1}\right]\\
\leq&(\hat{Z}_{N-1,\beta}^{\omega})^{\theta}-c_{3}(\hat{Z}_{N-1,\beta}^{\omega})^{\theta}I_{N},
\end{split}
\end{equation}
where the last inequality is due to Lemma \ref{T303}, with $(\alpha_{x})_{x\in\mathbbm{Z}}=(\mathbf{P}_{N-1,\beta}^{\omega}(S_{N}=x))_{x\in\mathbbm{Z}}$ and noticing that $I_{N}=\sum\limits_{x\in\mathbbm{Z}}(\mathbf{P}_{N-1,\beta}^{\omega}(S_{N}=x))^{2}$. Taking expectation on both sides of \eqref{307} and \eqref{310} and using Lemma \ref{T305}, we obtain
\begin{equation}\label{311}
\mathbbm{E}[(\hat{Z}_{N,\beta}^{\omega})^{\theta}]\leq\left(1-\frac{c_{3}}{|\Lambda_{N}|}\right)\mathbbm{E}[(\hat{Z}_{N-1,\beta}^{\omega})^{\theta}]
+\frac{2c_{3}}{|\Lambda_{N}|}\mathbf{P}(S_{N}\notin\Lambda_{N})^{\theta}
\end{equation}
for any sequence of bounded sets $(\Lambda_{i})_{i\geq1}$.

For a recurrent $S$, by Proposition \ref{T301} (i), we have $\sum\limits_{n=1}^{\infty}\frac{1}{a_{n}}=\infty$. Then we can always find a sequence $(b_{n})_{n\geq1}$ such that
\begin{equation}\label{312}
\lim\limits_{n\to\infty}\frac{b_{n}}{a_{n}}=\infty,~\mbox{and}~\sum\limits_{n=1}^{\infty}\frac{1}{b_{n}}=\infty.
\end{equation}
Hence, we can choose $\Lambda_{N}=(-b_{N},b_{N})$ such that $\mathbf{P}(S_{N}\notin\Lambda_{N})$ tends to 0, since $a_{N}^{-1}S_{N}$ converges in probability to some stable law. For any $\epsilon>0$, for large enough $N$, we have $2\mathbf{P}(S_{N}\notin\Lambda_{N})^{\theta}<\epsilon$, and then
\begin{equation}\label{313}
\mathbbm{E}[(\hat{Z}_{N,\beta}^{\omega})^{\theta}]-\epsilon\leq\left(1-\frac{c_{3}}{2b_{N}}\right)
(\mathbbm{E}[(\hat{Z}_{N-1,\beta}^{\omega})^{\theta}]-\epsilon)\leq\exp\left(-\frac{c_{3}}{2b_{N}}\right)(\mathbbm{E}[(\hat{Z}_{N-1,\beta}^{\omega})^{\theta}]
-\epsilon).
\end{equation}
Iterating this inequality and using Fatou's lemma, we obtain
\begin{equation}\label{314}
\mathbbm{E}[(\hat{Z}_{\infty,\beta}^{\omega})^{\theta}]-\epsilon\leq\varliminf\limits_{M\to\infty}\mathbbm{E}[(\hat{Z}_{M,\beta}^{\omega})^{\theta}]-\epsilon
\leq\varliminf\limits_{M\to\infty}\exp\left(-\sum\limits_{n=N}^{M}\frac{c_{3}}{2b_{n}}\right)
(\mathbbm{E}[(\hat{Z}_{N-1,\beta}^{\omega})^{\theta}]-\epsilon)=0.
\end{equation}
Since $\epsilon$ is arbitrary, it follows that $\mathbbm{E}[(\hat{Z}_{\infty,\beta}^{\omega})^{\theta}]=0$, i.e., strong disorder holds.
\end{proof}

\subsection{Proof of Theorem \ref{T115}}
In this subsection, we prove Theorem \ref{T115}, which gives bounds on the free energy when $\alpha\in(1,2]$. The technique that is used here has been developed in many articles, see \cite{lacoin2010new, toninelli2009coarse, giacomin2010marginal}. We only give a proof for Gaussian environment. It is not hard to deduce the result for general environment from Gaussian environment, see \cite[Page 481]{lacoin2010new}.
\begin{proof}[Proof of Theorem \ref{T115} in Gaussian environment]
We start with a simple observation. By Jensen's inequality, for $\theta\in(0,1)$,
\begin{equation}\label{315}
p(\beta)=\lim\limits_{N\to\infty}\frac{1}{N}\mathbbm{E}[\log\hat{Z}_{N,\beta}^{\omega}]\leq\varliminf\limits_{N\to\infty}\frac{1}{\theta N}\log\mathbbm{E}
[(\hat{Z}_{N,\beta}^{\omega})^{\theta}].
\end{equation}
Hence, we only need to show that the fractional moment of $\hat{Z}_{N,\beta}^{\omega}$, for some power $\theta\in(0,1)$ that will be determined later, decays exponentially in $N$.

To conclude \eqref{123}, it is sufficient to focus on a subsequence of $\hat{Z}_{N,\beta}^{\omega}$ by \eqref{315}. We use the coarse-graining method in this step. Consider the sequence $N=mn$, where $m$ will tend to infinity and $n$ is fixed once chosen, which will be determined by $\beta$ later. The idea is that we will only investigate the heavy-tailed random walk $S$ at time $n, 2n,\ldots, mn$. For each $in$, where $i=1,\ldots,m$, we can find a time-space window in $\mathbbm{N}\times\mathbbm{Z}$, in which $S_{in}$ falls with high probability, thanks to convergence to stable law.

Let $(a_{n})_{n\geq1}$ be the scaling sequence such that $a_{N}^{-1}S_{N}$ converges to an $\alpha$-stable law in distribution. Notice that we can choose $(a_{n})_{n\geq1}$ to be non-decreasing and integer-valued, which will simplify our argument. Denote $I_{k}=[ka_{n},(k+1)a_{n})$ and we make the decomposition
\begin{equation}\label{316}
\hat{Z}_{N,\beta}^{\omega}=\sum\limits_{y_{1},\ldots,y_{m}\in\mathbbm{Z}}\hat{Z}_{(y_{1},\ldots,y_{m})}^{\beta,\omega},
\end{equation}
where
\begin{equation}\label{317}
\hat{Z}_{(y_{1},\ldots,y_{m})}^{\beta,\omega}=\mathbf{E}\left[\exp\left\{\sum\limits_{i=1}^{N}\left(\beta\omega_{i,S_{i}}-\frac{\beta^{2}}{2}\right)\right\}
\mathbbm{1}_{\{S_{in}\in I_{y_{i}}, \forall i=1,\ldots,m\}}\right].
\end{equation}
Then
\begin{equation}\label{318}
\mathbbm{E}[(\hat{Z}_{N,\beta}^{\omega})^{\theta}]\leq\sum\limits_{y_{1},\ldots,y_{m}\in\mathbbm{Z}}\mathbbm{E}[(\hat{Z}_{(y_{1},\ldots,y_{m})}^{\beta,\omega})
^{\theta}],
\end{equation}
since the inequality $(\sum a_{n})^{\theta}\leq\sum a_{n}^{\theta}$ holds for any countable sequence for any $\theta\in(0,1]$. Note that the length of each interval $I_{k}$ is chosen to match the scaling of $S_{n}$, and if $S\in\{S_{in}\in I_{y_{i}}~\forall i=1,\ldots,m\}$, then $(y_{1},\ldots,y_{m})$ is called the coarse-grained version of the trajectory of $S$.

Next, to estimate $\mathbbm{E}[(\hat{Z}_{(y_{1},\ldots,y_{m})}^{\beta,\omega})^{\theta}]$, we use a change of measure procedure, which we now explain. We will define a new law for the random environment, which shifts down the expectation of $\omega_{j,x}$ at sites, where the random walk $S$ visits with relatively high probability, to a negative value. This can significantly decrease the expectation of $\hat{Z}_{(y_{1},\ldots,y_{m})}^{\beta,\omega}$ under the new law of $\omega$, and the cost of the change of measure can be chosen to be small.

For any $Y=(y_{0},\ldots,y_{m-1})$, we introduce the set
\begin{equation}\label{319}
J_{Y}=\{(kn+i,y_{k}a_{n}+z):~k=0,\ldots,m-1,~i=1,\ldots,n,~|z|\leq C_{1}a_{n}\},
\end{equation}
where $y_{0}=0$ for convenience and $C_{1}$ is a large integer to be determined later. Note that $|J_{Y}|=2C_{1}a_{n}mn$, where $|A|$ denote the cardinality of a set $A$. We can consider the choice of $J_{Y}$ in the following way: suppose that the random walk $S$ reaches $y_{k}a_{n}$ at time $kn$. Then for the next $n$ steps of this random walk, its path will probably fall in the set
\begin{equation}\label{320}
B_{k}=\{(kn+i,y_{k}a_{n}+z):~i=1,\ldots,n,~|z|\leq C_{1}a_{n}\}.
\end{equation}
Note that $(B_{k})_{0\leq k\leq m-1}$ are disjoint and $J_{Y}=\bigcup\limits_{k=0}^{m-1}B_{k}$. According to argument above \eqref{319}, we will perform the change of measure on $J_{Y}$ (see Figure 1 in the next page).

\begin{figure}
\centering
\includegraphics[trim={5.5cm 5cm 5cm 8cm}, scale=0.5]{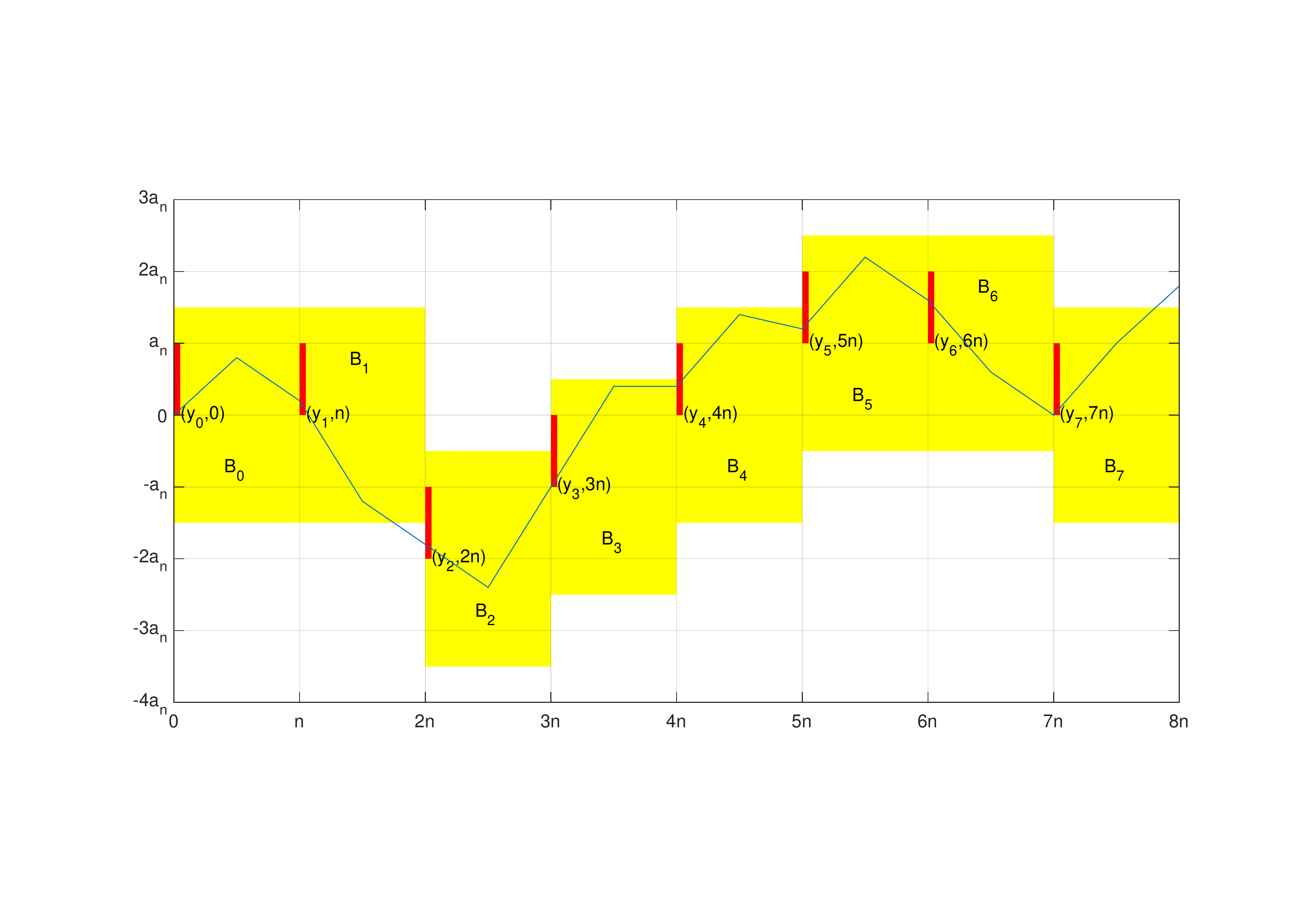}
\caption{This figure represents the coarse-grained version of a trajectory of the random walk $S$. We investigate the random walk $S$ at time $in$, $i=1,\ldots,m$. The bold vertical line segments mean that at time $in$, the random walk $S$ falls in the interval $I_{y_{i}}$, where $y_{i}$ is the vertical coordinate of the lower endpoint of the $i+1$-th bold vertical line segments. The rectangles $B_{k}$ containing $n\times2C_1 a_n$ sites are defined in \eqref{320}, on which we will make change of measure.}
\end{figure}

We define the new measure $\mathbbm{P}_{Y}$, under which $(\omega_{i,x})_{i\geq0,x\in\mathbbm{Z}}$ are independent Gaussian random variables with variance $1$ and expectation $\mathbbm{E}_{Y}[\omega_{1,0}]=-\delta(n)\mathbbm{1}_{(i,x)\in J_{Y}}$, where $\delta(n)$ is a small number and will be determined later. Some direct computation shows that
\begin{equation}\label{321}
\frac{d\mathbbm{P}_{Y}}{d\mathbbm{P}}=\exp\left\{-\sum\limits_{(i,x)\in J_{Y}}\left(\delta(n)\omega_{i,x}+\frac{\delta(n)^{2}}{2}\right)\right\}.
\end{equation}
Then by H$\ddot{\mbox{o}}$lder's inequality,
\begin{equation}\label{322}
\begin{split}
\mathbbm{E}[(\hat{Z}_{(y_{1},\ldots,y_{m})}^{\beta,\omega})^{\theta}]&=\mathbbm{E}_{Y}\left[\frac{d\mathbbm{P}}{d\mathbbm{P}_{Y}}(\hat{Z}_{(y_{1},\ldots,y_{m})}
^{\beta,\omega})^{\theta}\right]\\
&\leq\left(\mathbbm{E}_{Y}\left[\left(\frac{d\mathbbm{P}}{d\mathbbm{P}_{Y}}\right)^{\frac{1}{1-\theta}}\right]\right)^{1-\theta}\left(\mathbbm{E}_{Y}[\hat{Z}
_{(y_{1},\ldots,y_{m})}^{\beta,\omega}]\right)^{\theta}.
\end{split}
\end{equation}
Here
\begin{equation}\label{323}
\left(\mathbbm{E}_{Y}\left[\left(\frac{d\mathbbm{P}}{d\mathbbm{P}_{Y}}\right)^{\frac{1}{1-\theta}}\right]\right)^{1-\theta}=\exp\left
(\frac{|J_{Y}|\theta\delta(n)^{2}}{2(1-\theta)}\right)=\exp\left(\frac{C_{1}a_{n}mn\theta\delta(n)^{2}}{1-\theta}\right).
\end{equation}
To make this term independent of $n$, we can set $\delta(n)=(C_{1}na_{n})^{-\frac{1}{2}}$.

To estimate
\begin{equation}\label{324}
\mathbbm{E}_{Y}[\hat{Z}_{(y_{1},\ldots,y_{m})}^{\beta,\omega}]=\mathbf{E}[\exp(-\beta\delta(n)|\{i:(i,S_{i})\in J_{Y}\}|)\mathbbm{1}_{\{S_{kn}\in I_{y_{k}},
1\leq k\leq m\}}],
\end{equation}
we define
\begin{align}
J&=\{(i,x):i=1,\ldots,n,~|x|\leq(C_{1}-1)a_{n}\},\label{325}\\
\bar{J}&=\{(i,x):i=1,\ldots,n,~|x|\leq(C_{1}-2)a_{n}\},\label{326}
\end{align}
Recall that $J_{Y}=\bigcup\limits_{k=0}^{m-1}B_{k}$ and $B_{k}\cap B_{l}=\emptyset$ for $k\neq l$ by \eqref{320}. We have
\begin{equation}\label{327}
\begin{split}
&\mathbf{E}\left[\exp(-\beta\delta(n)|\{i:(i,S_{i})\in J_{Y}\}|)\mathbbm{1}_{\{S_{kn}\in I_{y_{k}},
1\leq k\leq m\}}\right]\\
=&\mathbf{E}\left[\prod\limits_{k=1}^{m}\exp(-\beta\delta(n)|\{i:(i,S_{i})\in B_{k-1}\}|)\mathbbm{1}_{S_{kn}\in I_{y_{k}}}\right]\\
\leq&\prod\limits_{k=1}^{m}\max\limits_{x\in I_{y_{k-1}}}\mathbf{E}^{x}\left[\exp(-\beta\delta(n)|\{i:(i+(k-1)n,S_{i})\in B_{k-1}\}|)\mathbbm{1}_{S_{n}\in I_{y_{k}}}\right]\\
\leq&\prod\limits_{k=1}^{m}\max\limits_{x\in I_{0}}\mathbf{E}^{x}\left[\exp(-\beta\delta(n)|\{i:(i,S_{i})\in J\}|)\mathbbm{1}_{S_{n}\in I_{y_{k}-y_{k-1}}}\right],
\end{split}
\end{equation}
where the first inequality is due to the Markov property and the last inequality is due to our definition of $I_{k}$ and $J$. Combine \eqref{314}, \eqref{317}, \eqref{318}, and \eqref{323}, it follows that
\begin{equation}\label{328}
\begin{split}
&\log\mathbbm{E}[(\hat{Z}_{N,\beta}^{\omega})^{\theta}]\\
\leq&\log\sum\limits_{y_{1},\ldots,y_{m}\in\mathbbm{Z}}\exp\left(\frac{\theta m}{1-\theta}\right)\left(\mathbbm{E}_{Y}[\hat{Z}_{N,\beta}^{\omega}]\right)^{\theta}\\
\leq&\frac{\theta m}{1-\theta}+\log\sum\limits_{y_{1},\ldots,y_{m}\in\mathbbm{Z}}\left(\prod\limits_{k=1}^{m}\max\limits_{x\in I_{0}}\mathbf{E}^{x}[\exp
(-\beta\delta(n)|\{i:(i,S_{i})\in J\}|)\mathbbm{1}_{S_{n}\in I_{y_{k}-y_{k-1}}}]\right)^{\theta}\\
=&m\left[\frac{\theta}{1-\theta}+\log\sum\limits_{z\in\mathbbm{Z}}(\max\limits_{x\in I_{0}}\mathbf{E}^{x}[\exp(-\beta\delta(n)|\{i:(i,S_{i})\in J\}|)
\mathbbm{1}_{S_{n}\in I_{z}}])^{\theta}\right]
\end{split}
\end{equation}
If we can show that the quantity in the square brackets is smaller than $-1$, then $p(\beta)\leq-\frac{1}{\theta n}$ by \eqref{315}, which will imply very strong disorder. It suffices to show that
\begin{equation}\label{329}
\sum\limits_{z\in\mathbbm{Z}}\max\limits_{x\in I_{0}}\mathbf{E}^{x}[\exp(-\beta\delta(n)|\{i:(i,S_{i})\in J\}|)\mathbbm{1}_{S_{n}\in I_{z}}]^{\theta}
\end{equation}
can be made sufficiently small.

Observe that
\begin{equation}\label{330}
\begin{split}
&\sum\limits_{z\in\mathbbm{Z}}\max\limits_{x\in I_{0}}\mathbf{E}^{x}[\exp(-\beta\delta(n)|\{i:(i,S_{i})\in J\}|)\mathbbm{1}_{S_{n}\in I_{z}}]^{\theta}\\
\leq&\sum\limits_{|y|\geq K}\max_{x\in I_{0}}\mathbf{P}^{x}(S_{n}\in I_{y})^{\theta}+2K\max\limits_{x\in I_{0}}\mathbf{E}^{x}[\exp
(-\beta\delta(n)|\{i:(i,S_{i})\in J\}|)]^{\theta}.
\end{split}
\end{equation}
For the first term,
\begin{equation}\label{331}
\begin{split}
\sum\limits_{|y|\geq K}\max\limits_{x\in I_{0}}\mathbf{P}^{x}(S_{n}\in I_{y})^{\theta}&\leq2\sum\limits_{y=K-2}^{\infty}\mathbf{P}\left(y\leq\frac{S_{n}}{a_{n}}
<y+2\right)^{\theta}\\
&\leq2\sum\limits_{y=K-2}^{\infty}\left(\frac{1}{y^{\gamma}}\mathbf{E}\left|\frac{S_{n}}{a_{n}}\right|^{\gamma}\right)^{\theta}\\
&\leq2C\sum\limits_{y=K-2}^{\infty}y^{-\gamma\theta}.
\end{split}
\end{equation}
The last inequality follows from \cite[Theorem 2.14]{jonsson2007almost} by choosing some $\gamma\in(1,\alpha)$. Therefore, we can fix $\theta$ such that $\gamma\theta>1$ and then choose $K$ large enough such that \eqref{331} is small enough.

For the second term,
\begin{equation}\label{332}
\begin{split}
&2K\max\limits_{x\in I_{0}}\mathbf{E}^{x}\left[\exp(-\beta\delta(n)|\{i:(i,S_{i})\in J\}|)\right]^{\theta}\\
\leq&2K\mathbf{E}\left[\exp(-\beta\delta(n)|\{i:(i,S_{i})\in\bar{J}\}|)\right]^{\theta}\\
\leq&2K[\exp(-n\beta\delta(n))+\mathbf{P}\{\mbox{the random walk goes out of}~\bar{J}\}]
\end{split}
\end{equation}
By choosing a large $C_{1}$, the second term in the square brackets can be made small by the analogue of invariance principle for heavy-tailed random walks. For the first term, notice that
\begin{equation}\label{333}
n\beta\delta(n)=\frac{\beta}{\sqrt{C_{1}}}\sqrt{{\frac{n}{a_{n}}}}=\frac{\beta n^{\frac{\alpha-1}{2\alpha}}}{\sqrt{C_{1}l(n)}}.
\end{equation}
We can choose the smallest $n=n(\beta)$ such that $\beta n^{\frac{\alpha-1}{2\alpha}}l(n)^{-\frac{1}{2}}\geq C_{2}$, where $C_{2}$ is a large constant so that $\exp(-\frac{C_{2}}{\sqrt{C_{1}}})$ is small enough. By our choice of $n$, if follows that
\begin{equation}\label{334}
\lim\limits_{\beta\to0}\beta n^{\frac{\alpha-1}{2\alpha}}l(n)^{-\frac{1}{2}}=C_{2}.
\end{equation}
Therefore,
\begin{equation}\label{335}
n^{\frac{\alpha-1}{2\alpha}}\frac{1}{\sqrt{l(n)}}\sim\frac{C_{2}}{\beta},~~\mbox{as}~\beta\to0.
\end{equation}
Define
\begin{equation}\label{336}
l_{\alpha}(x):=\frac{1}{\sqrt{l(x^{\frac{2\alpha}{\alpha-1}})}}.
\end{equation}
Then $l_{\alpha}(x)$ is also a slowly varying function. We then have
\begin{equation}\label{337}
n^{\frac{\alpha-1}{2\alpha}}l_{\alpha}(n^{\frac{\alpha-1}{2\alpha}})\sim\frac{C_{2}}{\beta},~~\mbox{as}~\beta\to0.
\end{equation}
By \cite[Theorem 1.5.13]{bingham1989regular}, we can find a slowly varying function $l^{\#}_{\alpha}(x)$, such that
\begin{equation}\label{338}
l^{\#}_{\alpha}(xl_{\alpha}(x))\sim\frac{1}{l_{\alpha}(x)},~~\mbox{as}~x\to\infty.
\end{equation}
Therefore,
\begin{equation}\label{339}
\frac{1}{l_{\alpha}(n^{\frac{\alpha-1}{2\alpha}})}\sim l^{\#}_{\alpha}\left(\frac{C_{2}}{\beta}\right),~~\mbox{as}~\beta\to0,
\end{equation}
that is,
\begin{equation}\label{340}
\sqrt{l(n)}\sim l^{\#}_{\alpha}\left(\frac{C_{2}}{\beta}\right),~~\mbox{as}~\beta\to0.
\end{equation}
Combine \eqref{335} and \eqref{340}, and recall that if $l(x)$ is a slowly varying function, then $l(ax)$ and $(l(x))^{\gamma}$ are both slowly varying functions for any $a>0$ and $\gamma\in\mathbbm{R}$ (see\cite{bingham1989regular}). By setting $\varphi=(l^{\#}_{\alpha})^{\frac{\alpha-1}{2\alpha}}$, we then obtain
\begin{equation}\label{341}
\frac{1}{n}\sim C_{2}\beta^{\frac{2\alpha}{\alpha-1}}\varphi\left(\frac{1}{\beta}\right),
\end{equation}
where $\varphi$ is some slowly varying function. Then for some constant $C$
\begin{equation}\label{342}
p(\beta)\leq-\frac{1}{\theta n}\leq-C\beta^{\frac{2\alpha}{\alpha-1}}\varphi\left(\frac{1}{\beta}\right),
\end{equation}
which completes the proof.
\end{proof}
\begin{remark}\label{T306}
In \cite[Proposition 1.5]{lacoin2010new}, Lacoin also gave a lower bound for the free energy of 1-dimensional nearest-neighbor directed polymer with an extra logarithmic term. Later in \cite{alexander2015directed}, the authors proved that one can actually remove the logarithmic term so that the lower bound and the upper bound are consistent (differ up to some prefactor). The proof of the lower bound involves the site percolation. To extend their lower bounds to the long-range model, some property of the long-range percolation may be needed, which has not been systematically studied, however. Besides, the negativity of the upper bound implies very strong disorder, which reflects the qualitative behavior of the polymer chain. Therefore, the upper bound is more significant than the lower bound and we just leave out the lower bound in this paper. Recently, in \cite{caravenna2015universality}, the authors identified the sharp high temperature asymptotic behavior of the shift of the critical point for the pinning model with exponent $\alpha\in(\frac{1}{2},1)$. We expect that their approach is also applicable to the long-range directed polymer with $\alpha\in(1,2]$.
\end{remark}
\begin{remark}\label{T307}
We continue the discussion in Remark \ref{T114}. Although we have given some equivalent conditions for recurrence of heavy-tailed random walks in Proposition \ref{T301}, for stable exponent $\alpha=1$, we have not deduced very strong disorder for all $\beta>0$ from the recurrence of the random walk $S$. The reason is that the slowly varying function $L(x)$ is subtle and the tail distribution of the random walk $S$ has much slower decay than that of a simple random walk. Therefore, some more delicate techniques are needed. In Berger and Lacoin's recent papers \cite{berger2015high, berger2015pinning}, they developed a more elaborate change of measure procedure. By that method, the authors identified the sharp high temperature asymptotic behavior for the nearest-neighbor directed polymer in $\mathbbm{Z}^{2+1}$ in \cite{berger2015high}, and the sharp asymptotics on the critical point shift for the pinning of one dimensional simple random walk. Note that $d=2$ is the critical dimension for the existence of the weak disorder regime in the nearest-neighbor directed polymer model on $\mathbbm{Z}^{d+1}$, and the case $\alpha=\frac{1}{2}$ for pinning model is critical for whether the disorder is relevant. Hence, we believe that their new method can also be applied to provide the asymptotic behavior of the free energy for long-range directed polymer model in the critical case $\alpha=1$. This paper does not include that case because it is quite involved and hence should be treated
separately.
\end{remark}
\section{Proof of Theorem \ref{T117}}
We will first extend the key lemma \cite[Lemma 5.3]{vargas2007strong} so that it holds not only for finite $(\eta_{i})_{i=1}^{n}$, but also for countable many $(\eta_{i})_{i\geq1}$.
\begin{lemma}\label{T401}
Denote $\Lambda=\{(\lambda_{i})_{i\geq1}\subset[0,1]^{\mathbbm{N}}:\sum\limits_{i=1}^{\infty}\lambda_{i}=1\}$, and let $(\eta_{i})_{i\geq1}$ be an i.i.d.sequence of positive random variables such that $\mathbbm{E}[|\log\eta_{1}|]<\infty$. Then for any positive integer $k$, we have
\begin{equation}\label{401}
\inf\limits_{(\lambda_{i})\in\Lambda\atop\sup(\lambda_{i})\leq\frac{1}{k}}\mathbbm{E}\left[\log\left(\sum\limits_{i=1}^{\infty}\lambda_{i}
\eta_{i}\right)\right]=\mathbbm{E}\left[\log\left(\frac{1}{k}\sum\limits_{i=1}^{k}\eta_{i}\right)\right].
\end{equation}
\end{lemma}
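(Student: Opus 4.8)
The plan is to prove the two inequalities separately, reducing the countable problem to the finite-variable version of the identity (the content of \cite[Lemma 5.3]{vargas2007strong}) and then passing to the limit. Throughout I write $m_{k}:=\mathbbm{E}[\log(\tfrac{1}{k}\sum_{i=1}^{k}\eta_{i})]$ for the right-hand side; note $m_{k}$ is finite, since $\tfrac{1}{k}\eta_{1}\le\tfrac{1}{k}\sum_{i=1}^{k}\eta_{i}\le\max_{i\le k}\eta_{i}$, so $\log(\tfrac{1}{k}\sum_{i=1}^k\eta_i)$ is sandwiched between $\log\eta_{1}-\log k$ and $\sum_{i=1}^{k}\log^{+}\eta_{i}$, both integrable by $\mathbbm{E}[|\log\eta_{1}|]<\infty$. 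The easy direction is ``$\le$'': the sequence $\lambda^{\ast}=(\tfrac1k,\dots,\tfrac1k,0,0,\dots)$ with exactly $k$ nonzero entries lies in $\Lambda$, satisfies $\sup_i\lambda^{\ast}_i\le\tfrac1k$, and realizes the value $m_{k}$, so the infimum is at most $m_{k}$.

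For the reverse inequality I would first record the finite-variable statement in the exact form I need: for every $n\ge k$,
\begin{equation*}
\inf\Big\{\,\mathbbm{E}\Big[\log\Big(\sum_{i=1}^{n}\lambda_i\eta_i\Big)\Big]\ :\ \sum_{i=1}^{n}\lambda_i=1,\ 0\le\lambda_i\le\tfrac1k\,\Big\}=m_{k}.
\end{equation*}
This is \cite[Lemma 5.3]{vargas2007strong}; conceptually it holds because $\lambda\mapsto\mathbbm{E}[\log\sum_i\lambda_i\eta_i]$ is concave (the concave $\log$ composed with a linear map, then averaged), so its minimum over the polytope $\{0\le\lambda_i\le\tfrac1k,\ \sum_i\lambda_i=1\}$ is attained at a vertex, and since $k\cdot\tfrac1k=1$ every vertex is a permutation of $\lambda^{\ast}$, all giving the value $m_{k}$ by exchangeability of the $\eta_i$.

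Now fix an arbitrary feasible $\lambda\in\Lambda$ with $\sup_i\lambda_i\le\tfrac1k$, and set $W=\sum_{i=1}^{\infty}\lambda_i\eta_i$, $W_n=\sum_{i=1}^{n}\lambda_i\eta_i$, $s_n=\sum_{i=1}^{n}\lambda_i\uparrow1$. Let $i_0=\min\{i:\lambda_i>0\}$, so that $Y:=\lambda_{i_0}\eta_{i_0}$ is a fixed positive variable with $W_n\ge Y$ for $n\ge i_0$; in particular $\log W$ and each $\log W_n$ are bounded below by the integrable variable $\log Y$. To apply the finite statement I would keep the configuration feasible by redistributing the missing mass $1-s_n$ onto $k$ fresh coordinates: define $\mu^{(n)}$ by $\mu^{(n)}_i=\lambda_i$ for $i\le n$, $\mu^{(n)}_i=\tfrac{1-s_n}{k}$ for $n<i\le n+k$, and $0$ otherwise. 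Then $\mu^{(n)}\in\Lambda$, $\sup_i\mu^{(n)}_i\le\tfrac1k$, it is finitely supported, and the finite statement gives $\mathbbm{E}[\log(W_n+E_n)]\ge m_{k}$, where $E_n=\tfrac{1-s_n}{k}\sum_{i=n+1}^{n+k}\eta_i\ge0$. It therefore remains to show $\mathbbm{E}[\log(W_n+E_n)]\to\mathbbm{E}[\log W]$.

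The hard part is this passage to the limit, precisely because $\eta_1$ is only assumed $\log$-integrable and may have infinite mean, so the error term $E_n$ carries no useful first-moment control and naive domination fails. I would split $\log(W_n+E_n)=\log W_n+\log(1+E_n/W_n)$. For the first piece, $W_n\uparrow W$ and $\log W_n\uparrow\log W$ with the integrable lower bound $\log Y$, so monotone convergence yields $\mathbbm{E}[\log W_n]\to\mathbbm{E}[\log W]$ (if $\mathbbm{E}[\log W]=+\infty$ the desired inequality is trivial, so one may assume it finite). For the second piece, $0\le\log(1+E_n/W_n)\le\log(1+E_n/Y)$, and here I would exploit two facts: the block $\eta_{n+1},\dots,\eta_{n+k}$ is independent of $Y$, and the deficit $\epsilon_n:=1-s_n$ is \emph{non-increasing} in $n$. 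Writing $E_n=\epsilon_n G_n$ with $G_n=\tfrac1k\sum_{i=n+1}^{n+k}\eta_i\overset{d}{=}G:=\tfrac1k\sum_{i=1}^k\eta_i$ independent of $Y$, the law of $\log(1+\epsilon_n G_n/Y)$ equals that of $\log(1+\epsilon_n G/Y)$, which decreases to $0$ as $\epsilon_n\downarrow0$; since $\log(1+\epsilon G/Y)\le\log(1+\epsilon G)+\log(1+1/Y)$ is integrable using only $\mathbbm{E}[|\log\eta_1|]<\infty$, monotone convergence for decreasing sequences gives $\mathbbm{E}[\log(1+E_n/W_n)]\to0$. Combining the two pieces, $\mathbbm{E}[\log(W_n+E_n)]\to\mathbbm{E}[\log W]\ge m_k$, which is the reverse inequality and closes the proof.
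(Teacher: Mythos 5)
Your proof is correct, and it takes a genuinely different route from the paper's, although both rest on the same key ingredient, the finite-variable lemma \cite[Lemma 5.3]{vargas2007strong}. The paper argues by contradiction: given a putative $(\bar{\lambda}_{i})$ beating the right-hand side, it first perturbs the finitely many weights equal to $\frac{1}{k}$ so that $\sup_{i}\bar{\lambda}_{i}=\epsilon<\frac{1}{k}$ strictly, then truncates at level $n$ and \emph{renormalizes} the truncated weights by $\Lambda_{n}=\sum_{i\leq n}\bar{\lambda}_{i}$ (the preliminary perturbation together with the condition $\Lambda_{n}>\epsilon k$ is exactly what keeps the rescaled weights under the cap $\frac{1}{k}$); dropping the tail by positivity of the $\eta_{i}$ then costs only the deterministic error $\log\Lambda_{n}\to0$, so no moment estimates at all are needed. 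You instead argue directly: you keep the truncated weights and \emph{redistribute} the deficit $1-s_{n}$ onto $k$ fresh coordinates, which preserves feasibility with no perturbation step, at the price of a stochastic additive error $E_{n}$ that must be controlled in expectation. That control is the delicate part of your argument and you carry it out correctly: the splitting $\log(W_{n}+E_{n})=\log W_{n}+\log(1+E_{n}/W_{n})$, the integrable floor $\log Y$, the identity in law of $(Y,G_{n})$ and $(Y,G)$ coming from independence of the fresh block, the domination $\log(1+\epsilon G/Y)\leq\log(1+\epsilon G)+\log(1+1/Y)$ which uses only $\mathbbm{E}[|\log\eta_{1}|]<\infty$, and monotone convergence along the non-increasing deficit $\epsilon_{n}$ all hold, and you rightly dispose of the case $\mathbbm{E}[\log W]=+\infty$ separately (it is here that one must remember $\eta_{1}$ may have infinite mean, so naive domination of $E_{n}$ would indeed fail). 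In summary, the paper's multiplicative rescaling converts the tail into a deterministic $\log\Lambda_{n}$ and yields a shorter proof, while your additive redistribution avoids the slightly fiddly step of making room under the cap $\frac{1}{k}$ and makes explicit that log-integrability alone suffices for the limit interchange; both are complete proofs of \eqref{401}.
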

\begin{proof}
We prove the lemma by contradiction. Assume that
\begin{equation}\label{402}
\inf\limits_{(\lambda_{i})\in\Lambda\atop\sup(\lambda_{i})\leq\frac{1}{k}}\mathbbm{E}\left[\log\left(\sum\limits_{i=1}^{\infty}
\lambda_{i}\eta_{i}\right)\right]<\mathbbm{E}\left[\log\left(\frac{1}{k}\sum\limits_{i=1}^{k}\eta_{i}\right)\right],
\end{equation}
then we can find a sequence $(\bar{\lambda}_{i})$ such that
\begin{equation}\label{403}
\mathbbm{E}\left[\log\left(\sum\limits_{i=1}^{\infty}\bar{\lambda}_{i}\eta_{i}\right)\right]
<\mathbbm{E}\left[\log\left(\frac{1}{k}\sum\limits_{i=1}^{k}\eta_{i}\right)\right].
\end{equation}
Note that there are only finite many $\bar{\lambda}_{i}$'s that equal to $\frac{1}{k}$, and by continuity, we can adjust those $\bar{\lambda}_{i}$'s if necessary such that $\sup_{i}\bar{\lambda}_{i}=\epsilon<\frac{1}{k}$ and \eqref{403} still holds. For any fixed integer $n$ which is large enough such that $\Lambda_{n}=\sum\limits_{i}^{n}\bar{\lambda}_{i}>\epsilon k$, we set $\tilde{\lambda}_{i}=\frac{\bar{\lambda}_{i}}{\Lambda_{n}}$ for $1\leq i\leq n$. Then,
\begin{equation}\label{404}
\mathbbm{E}\left[\log\left(\sum\limits_{i=1}^{\infty}\bar{\lambda}_{i}\eta_{i}\right)\right]\geq
\mathbbm{E}\left[\log\left(\sum\limits_{i=1}^{n}\tilde{\lambda}_{i}\eta_{i}\right)\right]+\log\Lambda_{n}
\geq\mathbbm{E}\left[\log\left(\frac{1}{k}\sum\limits_{i=1}^{k}\eta_{i}\right)\right]+\log\Lambda_{n},
\end{equation}
where the first inequality is due to the positivity of $\eta_{i}$ and the second inequality holds by \cite[Lemma 5.3]{vargas2007strong} since $\sup\limits_{1\leq i\leq n}\tilde{\lambda}_{i}\leq\frac{1}{k}$ and $\sum\limits_{i=1}^{n}\tilde{\lambda_{i}}=1$. Let $n$ tend to infinity, then $\log\Lambda_{n}$ tends to 0 and \eqref{404} contradicts \eqref{403}.
\end{proof}
\begin{proof}[Proof of Theorem \ref{T117}]
We follow the same strategy of proof as that for \cite[Theorem 3.7]{vargas2007strong} in the nearest-neighbor case. We will decompose $N^{-1}\log\hat{Z}_{N,\beta}^{\omega}$ to construct a martingale by successively conditioning on $\mathcal{G}_{N}$, which is the $\sigma$-field generated by $(\omega_{i,x})_{1\leq i\leq N,x\in\mathbbm{Z}}$. First, define
\begin{equation}\label{405}
A_{N,\beta}^{\epsilon}=\{\omega:\sup\limits_{x\in\mathbbm{Z}}\mathbf{P}_{N-1,\beta}^{\omega}(S_{N}=x)>\epsilon\}.
\end{equation}
Then
\begin{equation}\label{406}
\begin{split}
\frac{\log Z_{N,\beta}^{\omega}}{N}=&\frac{1}{N}\sum\limits_{j=1}^{N}\log\frac{Z_{j,\beta}^{\omega}}{Z_{j-1,\beta}^{\omega}}\\
=&\frac{1}{N}\sum\limits_{j=1}^{N}\mathbbm{1}_{A_{j,\beta}^{\epsilon}}\log\left(\sum\limits_{x\in\mathbbm{Z}}\mathbf{P}_{j-1,\beta}^{\omega}(S_{j}=x)\exp
(\beta\omega_{j,x})\right)\\
+&\frac{1}{N}\sum\limits_{j=1}^{N}\mathbbm{1}_{(A_{j,\beta}^{\epsilon})^{c}}\log\left(\sum\limits_{x\in\mathbbm{Z}}\mathbf{P}_{j-1,\beta}^{\omega}(S_{j}=x)\exp
(\beta\omega_{j,x})\right)
\end{split}
\end{equation}
Note that in the second term of the right-hand side of \eqref{406}, $\sup\limits_{x\in\mathbbm{Z}}\mathbf{P}_{N-1,\beta}^{\omega}(S_{N}=x)\leq\epsilon$. Hence, we can apply Lemma \ref{T401} to this term later.

Define $\mathcal{G}_{N}$-martingales
\begin{equation}\label{407}
\begin{split}
M_{N}:=&\sum\limits_{j=1}^{N}\mathbbm{1}_{(A_{j,\beta}^{\epsilon})^{c}}\log\left(\sum\limits_{x\in\mathbbm{Z}}\mathbf{P}_{j-1,\beta}^{\omega}(S_{j}=x)
\exp(\beta\omega_{j,x})\right)\\
&-\sum\limits_{j=1}^{N}\mathbbm{1}_{(A_{j,\beta}^{\epsilon})^{c}}\mathbbm{E}\left[\left.\log\left(\sum\limits_{x\in\mathbbm{Z}}\mathbf{P}_{j-1,\beta}^{\omega}
(S_{j}=x)\exp(\beta\omega_{j,x})\right)\right|\mathcal{G}_{j-1}\right],
\end{split}
\end{equation}
and
\begin{equation}\label{408}
\begin{split}
L_{N}:=&\sum\limits_{j=1}^{N}\mathbbm{1}_{A_{j,\beta}^{\epsilon}}\log\left(\sum\limits_{x\in\mathbbm{Z}}\mathbf{P}_{j-1,\beta}^{\epsilon}(S_{j}=x)
\exp(\beta\omega_{j,x})\right)\\
&-\sum\limits_{j=1}^{N}\mathbbm{1}_{A_{j,\beta}^{\epsilon}}\mathbbm{E}\left[\left.\log\left(\sum\limits_{x\in\mathbbm{Z}}\mathbf{P}_{j-1,\beta}^{\omega}(S_{j}=x)
\exp(\beta\omega_{j,x})\right)\right|\mathcal{G}_{j-1}\right]
\end{split}
\end{equation}
Then
\begin{equation}\label{409}
\begin{split}
&\frac{1}{N}\sum\limits_{j=1}^{N}\mathbbm{1}_{A_{j,\beta}^{\epsilon}}\log\left(\sum\limits_{x\in\mathbbm{Z}}\mathbf{P}_{j-1,\beta}^{\omega}(S_{j}=x)\exp
(\beta\omega_{j,x})\right)\\
=&\frac{L_{N}}{N}+\frac{1}{N}\sum\limits_{j=1}^{N}\mathbbm{1}_{A_{j,\beta}^{\epsilon}}\mathbbm{E}\left[\left.\log\left(\sum\limits_{x\in\mathbbm{Z}}
\mathbf{P}_{j-1,\beta}^{\epsilon}(S_{j}=x)\exp(\beta\omega_{j,x})\right)\right|\mathcal{G}_{j-1}\right]\\
\geq&\frac{L_{N}}{N}+\beta\mathbbm{E}[\omega_{1,0}]\cdot\frac{1}{N}\sum\limits_{j=1}^{N}\mathbbm{1}_{A_{j}^{\epsilon,\beta}}=\frac{L_{N}}{N}.
\end{split}
\end{equation}
by Jensen's inequality. And
\begin{equation}\label{410}
\begin{split}
&\frac{1}{N}\sum\limits_{j=1}^{N}\mathbbm{1}_{(A_{j,\beta}^{\epsilon})^{c}}\log\left(\sum\limits_{x\in\mathbbm{Z}}\mathbf{P}_{j-1,\beta}^{\omega}(S_{j}=x)\exp
(\beta\omega_{j,x})\right)\\
=&\frac{M_{N}}{N}+\frac{1}{N}\sum\limits_{j=1}^{N}\mathbbm{1}_{(A_{j,\beta}^{\epsilon})^{c}}\mathbbm{E}\left[\left.\log\left(\sum\limits_{x\in\mathbbm{Z}}
\mathbf{P}_{j-1,\beta}^{\omega}(S_{j}=x)\exp(\beta\omega_{j,x})\right)\right|\mathcal{G}_{j-1}\right]\\
\geq&\frac{M_{N}}{N}+\frac{1}{N}\sum\limits_{j=1}^{N}\mathbbm{1}_{(A_{j,\beta}^{\epsilon})^{c}}\mathbbm{E}\left[\log\left(\epsilon\sum\limits_{i=1}
^{\frac{1}{\epsilon}}\exp(\beta\omega_{i,0})\right)\right]
\end{split}
\end{equation}
by \eqref{401} with $(\exp(\beta\omega_{j,x}))_{j\geq0, x\in\mathbbm{Z}}$ and $(\mathbf{P}_{j-1,\beta}^{\omega}(S_{j}=x))_{x\in\mathbbm{Z}}$ playing respectively the role of $(\eta_{i})_{i\leq1}$ and $(\lambda_{i})_{i\geq1}$ in \eqref{401}. We obtain
\begin{equation}\label{411}
\frac{\log Z_{N,\beta}^{\omega}}{N}-\frac{M_{N}}{N}-\frac{L_{N}}{N}\geq\left(\frac{1}{N}\sum\limits_{j=1}^{N}\mathbbm{1}_{(A_{j,\beta}^{\epsilon})^{c}}\right)
\mathbbm{E}\left[\log\left(\epsilon\sum\limits_{i=1}^{\frac{1}{\epsilon}}\exp(\beta\omega_{i,0})\right)\right].
\end{equation}
We will then prove that $\frac{M_{N}}{N}$ and $\frac{L_{N}}{N}$ tend to $0$ as $N$ tends to infinity by applying the following theorem \cite[Theorem 2.19]{hall2014martingale}
\begin{theorem}[Hall-Heyde~\cite{hall2014martingale}]\label{T402}
Let $(Y_{n})_{n\geq1}$ be a sequence of random variables and $(\mathcal{F}_{n})_{n\geq1}$ an increasing sequence of $\sigma$-fields with $Y_{n}$ measurable with respect to $\mathcal{F}_{n}$ for each $n$. Let $Y$ be a random variable and $c$ a constant such that $\mathbbm{E}|Y|<\infty$ and $\mathbbm{P}(|Y_{n}|>x)\leq c\mathbbm{P}(|Y|>x)$ for each $x>0$ and $n\geq1$. Then
\begin{equation}\label{412}
n^{-1}\sum\limits_{i=1}^{n}[Y_{i}-\mathbbm{E}[Y_{i}|\mathcal{F}_{i-1}]]\overset{\mathbbm{P}}{\to}0~~\mbox{as}~n\to\infty.
\end{equation}
If $\mathbbm{E}[|Y|\log^{+}|Y|]<\infty$, then the convergence in probability in \eqref{412} can be strengthen to almost sure convergence.
\end{theorem}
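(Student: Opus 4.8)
The plan is to treat Theorem \ref{T402} as a martingale-difference version of Kolmogorov's strong law, proved by truncation, a weighted second-moment estimate for the martingale part, and Kronecker's lemma. Writing $D_{i}:=Y_{i}-\mathbbm{E}[Y_{i}\mid\mathcal{F}_{i-1}]$, the goal is $n^{-1}\sum_{i=1}^{n}D_{i}\to0$. I would truncate at the running level, setting $Y_{i}^{*}:=Y_{i}\mathbbm{1}_{\{|Y_{i}|\leq i\}}$ and $D_{i}^{*}:=Y_{i}^{*}-\mathbbm{E}[Y_{i}^{*}\mid\mathcal{F}_{i-1}]$, which remains a martingale difference sequence, and then control separately the truncation error, the truncated martingale, and the random centering correction, via the decomposition $n^{-1}\sum_{i=1}^{n}D_{i}=n^{-1}\sum_{i=1}^{n}D_{i}^{*}+n^{-1}\sum_{i=1}^{n}(Y_{i}-Y_{i}^{*})-R_{n}$.

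First I would dispose of the truncation error. The domination hypothesis and $\mathbbm{E}|Y|<\infty$ give $\sum_{i\geq1}\mathbbm{P}(|Y_{i}|>i)\leq c\sum_{i\geq1}\mathbbm{P}(|Y|>i)\leq c\,\mathbbm{E}|Y|<\infty$, so Borel--Cantelli forces $Y_{i}=Y_{i}^{*}$ for all large $i$, $\mathbbm{P}$-a.s.; hence $n^{-1}\sum_{i=1}^{n}(Y_{i}-Y_{i}^{*})\to0$ a.s. For the martingale part, the weighted sums $M_{n}:=\sum_{i=1}^{n}D_{i}^{*}/i$ form an $L^{2}$-martingale with orthogonal increments, and since $\mathbbm{E}[(D_{i}^{*})^{2}]\leq\mathbbm{E}[(Y_{i}^{*})^{2}]\leq c\,\mathbbm{E}[Y^{2}\mathbbm{1}_{\{|Y|\leq i\}}]$, the classical interchange-of-summation estimate $\sum_{i\geq1}i^{-2}\mathbbm{E}[Y^{2}\mathbbm{1}_{\{|Y|\leq i\}}]\leq C\,\mathbbm{E}|Y|<\infty$ shows $M_{n}$ is $L^{2}$-bounded. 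Thus $M_{n}$ converges a.s. by the martingale convergence theorem, and Kronecker's lemma gives $n^{-1}\sum_{i=1}^{n}D_{i}^{*}\to0$ a.s.

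It then remains to handle the centering correction $R_{n}:=n^{-1}\sum_{i=1}^{n}\big(\mathbbm{E}[Y_{i}\mid\mathcal{F}_{i-1}]-\mathbbm{E}[Y_{i}^{*}\mid\mathcal{F}_{i-1}]\big)=n^{-1}\sum_{i=1}^{n}\mathbbm{E}[Y_{i}\mathbbm{1}_{\{|Y_{i}|>i\}}\mid\mathcal{F}_{i-1}]$. For the convergence in probability I would simply take expectations: $\mathbbm{E}|R_{n}|\leq n^{-1}\sum_{i=1}^{n}\mathbbm{E}[|Y_{i}|\mathbbm{1}_{\{|Y_{i}|>i\}}]\leq c\,n^{-1}\sum_{i=1}^{n}\mathbbm{E}[|Y|\mathbbm{1}_{\{|Y|>i\}}]$, and because $\mathbbm{E}[|Y|\mathbbm{1}_{\{|Y|>i\}}]\to0$ its Ces\`aro mean vanishes, so $R_{n}\to0$ in $L^{1}$ and hence in probability. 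Combined with the two a.s.\ statements above, this yields the in-probability conclusion under $\mathbbm{E}|Y|<\infty$.

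The crux, and the only place the extra $\log^{+}$ moment is used, is upgrading $R_{n}\to0$ to almost sure convergence. Here I would bound $|R_{n}|\leq n^{-1}\sum_{i=1}^{n}W_{i}$ with $W_{i}:=\mathbbm{E}[|Y_{i}|\mathbbm{1}_{\{|Y_{i}|>i\}}\mid\mathcal{F}_{i-1}]\geq0$, and apply Kronecker's lemma once more: it suffices that $\sum_{i\geq1}W_{i}/i<\infty$ a.s. By Tonelli and domination, $\mathbbm{E}\big[\sum_{i\geq1}W_{i}/i\big]\leq c\sum_{i\geq1}i^{-1}\mathbbm{E}[|Y|\mathbbm{1}_{\{|Y|>i\}}]$, and the classical identity $\sum_{i\geq1}i^{-1}\mathbbm{E}[|Y|\mathbbm{1}_{\{|Y|>i\}}]\asymp\mathbbm{E}[|Y|\log^{+}|Y|]$ shows this is finite precisely when $\mathbbm{E}[|Y|\log^{+}|Y|]<\infty$; finiteness of the expectation forces $\sum_{i}W_{i}/i<\infty$ a.s., and Kronecker finishes. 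The delicate point, and the main obstacle, is exactly that the centering terms are random, so the unconditional $L^{1}$ bound that handled the in-probability case no longer yields a pathwise statement; the logarithmic summability must instead be fed through Tonelli and Kronecker to kill the Ces\`aro average along almost every trajectory.
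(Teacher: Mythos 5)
The paper contains no proof of this statement to compare against: Theorem \ref{T402} is imported verbatim as \cite[Theorem 2.19]{hall2014martingale} and used as a black box in the proof of Theorem \ref{T117}. Judged on its own, your argument is essentially the standard Hall--Heyde proof of that theorem --- truncation at level $i$, Borel--Cantelli for the truncation error, an $L^{2}$-bounded weighted martingale plus Kronecker for the truncated differences, Ces\`aro/$L^{1}$ decay of the conditional centering under $\mathbbm{E}|Y|<\infty$, and Tonelli plus Kronecker under $\mathbbm{E}[|Y|\log^{+}|Y|]<\infty$ --- and its architecture is sound. In particular you correctly identify the random centering term $R_{n}$ as the only piece separating the in-probability conclusion from the almost-sure one, and your first-moment domination $\mathbbm{E}[|Y_{i}|\mathbbm{1}_{\{|Y_{i}|>i\}}]\leq c\,\mathbbm{E}[|Y|\mathbbm{1}_{\{|Y|>i\}}]$ is exact, since $\mathbbm{E}[|Z|\mathbbm{1}_{\{|Z|>i\}}]=i\,\mathbbm{P}(|Z|>i)+\int_{i}^{\infty}\mathbbm{P}(|Z|>x)\,dx$ is a monotone functional of the tail.

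One intermediate inequality is stated incorrectly, though the proof survives the repair. Tail domination does \emph{not} give $\mathbbm{E}[(Y_{i}^{*})^{2}]\leq c\,\mathbbm{E}[Y^{2}\mathbbm{1}_{\{|Y|\leq i\}}]$: it can genuinely fail, e.g.\ if $Y$ has no mass in $(0,i]$ but an atom just above $i$, while $Y_{i}$ is permitted an atom at $i$, making the right-hand side zero and the left-hand side of order $i^{2}$. What the tail-integral representation actually yields is
\begin{equation*}
\mathbbm{E}\bigl[Y_{i}^{2}\mathbbm{1}_{\{|Y_{i}|\leq i\}}\bigr]\leq\int_{0}^{i}2x\,\mathbbm{P}(|Y_{i}|>x)\,dx\leq c\int_{0}^{i}2x\,\mathbbm{P}(|Y|>x)\,dx=c\Bigl(\mathbbm{E}\bigl[Y^{2}\mathbbm{1}_{\{|Y|\leq i\}}\bigr]+i^{2}\,\mathbbm{P}(|Y|>i)\Bigr),
\end{equation*}
and the extra boundary term is harmless for your purpose, since $\sum_{i\geq1}i^{-2}\cdot i^{2}\,\mathbbm{P}(|Y|>i)=\sum_{i\geq1}\mathbbm{P}(|Y|>i)\leq\mathbbm{E}|Y|<\infty$, so the weighted sum $\sum_{i\geq1}i^{-2}\mathbbm{E}[(D_{i}^{*})^{2}]$ remains finite and the martingale step goes through unchanged. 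With that one display corrected, your proof is complete and coincides in substance with the source's own proof of the cited theorem.
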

First, by Jensen's inequality, we have
\begin{equation}\label{413}
\beta\sum\limits_{x\in\mathbbm{Z}}\mathbf{P}_{j-1,\beta}^{\omega}(S_{j}=x)\omega_{j,x}\leq\log\left(\sum\limits_{x\in\mathbbm{Z}}\mathbf{P}_{j-1,\beta}^{\omega}
(S_{j}=x)\exp(\beta\omega_{j,x})\right).
\end{equation}
And by using that $\log x\leq x^{\frac{1}{\theta}}$ for $1<\theta\leq e$, we have
\begin{equation}\label{414}
\log\left(\sum\limits_{x\in\mathbbm{Z}}\mathbf{P}_{j-1,\beta}^{\omega}(S_{j}=x)\exp(\beta\omega_{j,x})\right)\leq\left(\sum\limits_{x\in\mathbbm{Z}}
\mathbf{P}_{j-1,\beta}^{\omega}(S_{j}=x)\exp(\beta\omega_{j,x})\right)^{\frac{1}{\theta}}.
\end{equation}
Then, by applying \eqref{413} when $\log\left(\sum\limits_{x\in\mathbbm{Z}}\mathbf{P}_{j-1,\beta}^{\omega}
(S_{j}=x)\exp(\beta\omega_{j,x})\right)<0$ and \eqref{414} when\\
$\log\left(\sum\limits_{x\in\mathbbm{Z}}\mathbf{P}_{j-1,\beta}^{\omega}
(S_{j}=x)\exp(\beta\omega_{j,x})\right)>0$, it follows that for all $j$,
\begin{equation}\label{415}
\begin{split}
&\mathbbm{E}\left|\log\left(\sum\limits_{x\in\mathbbm{Z}}\mathbf{P}_{j-1,\beta}^{\omega}(S_{j}=x)\exp(\beta\omega_{j,x})\right)\right|^{\theta}\\
\leq&\beta^{\theta}\mathbbm{E}\left(\sum\limits_{x\in\mathbbm{Z}}\mathbf{P}_{j-1,\beta}^{\omega}(S_{j}=x)|\omega_{j,x}|\right)^{\theta}
+\mathbbm{E}\left[\sum\limits_{x\in\mathbbm{Z}}\mathbf{P}_{j-1,\beta}^{\omega}(S_{j}=x)\exp(\beta\omega_{j,x})\right]\\
\leq&\beta^{\theta}\mathbbm{E}|\omega_{1,0}|^{\theta}+\exp(\lambda(\beta))=C.
\end{split}
\end{equation}
Let $\mathbbm{1}_{(A_{j,\beta}^{\epsilon})^{c}}\log\left(\sum\limits_{x\in\mathbbm{Z}}\mathbf{P}_{j-1,\beta}^{\omega}(S_{j}=x)
\exp(\beta\omega_{j,x})\right)$ and $\mathbbm{1}_{A_{j,\beta}^{\epsilon}}\log\left(\sum\limits_{x\in\mathbbm{Z}}\mathbf{P}_{j-1,\beta}^{\omega}(S_{j}=x)
\exp(\beta\omega_{j,x})\right)$ play the role $Y_{j}$ in Theorem \ref{T402}, and define a random variable $Y$ such that for all $x>C^{\frac{1}{\theta}}$,
\begin{equation}\label{416}
\mathbbm{P}(|Y|>x)=\frac{C}{x^{\theta}},
\end{equation}
where $C$ is the same as that in \eqref{415}. Then,
\begin{equation}\label{417}
\lim\limits_{N\to\infty}\frac{M_{N}}{N}=\lim\limits_{N\to\infty}\frac{L_{N}}{N}=0,~~\mbox{in}~\mathbbm{P}\mbox{-probability}.
\end{equation}
Note that, recalling $\theta>1$ and by the definition \eqref{416}, $\mathbbm{E}[|Y|\log^{+}|Y|]<\infty$. Therefore, the convergence in \eqref{417} can be strengthened to almost sure convergence. By taking limits on both sides of \eqref{411}, we have
\begin{equation}\label{418}
\varlimsup\limits_{N\to\infty}\frac{1}{N}\sum\limits_{j=1}^{N}\mathbbm{1}_{(A_{j,\beta}^{\epsilon})^{c}}\leq\frac{F(\beta)}
{\mathbbm{E}\left[\log\left(\epsilon\sum\limits_{i=1}^{\frac{1}{\epsilon}}\exp(\beta\omega_{i,0})\right)\right]},~\mathbbm{P}\mbox{-a.s.},
\end{equation}
where $F(\beta)$ is the free energy of the system by \eqref{110}. Let $\epsilon$ tend to $0$ along the sequence $(\frac{1}{k})_{k\geq1}$. By Jensen's inequality, the law of large numbers and Fatou's lemma, it is not hard to see
\begin{equation}\label{419}
\lim\limits_{\epsilon\to0}\mathbbm{E}\left[\log\left(\epsilon\sum\limits_{i=1}^{\frac{1}{\epsilon}}\exp(\beta\omega_{i,0})\right)\right]
=\lambda(\beta)>F(\beta).
\end{equation}
The last inequality is due to our very strong disorder assumption.

Hence, we can choose $\epsilon$ small enough such that
\begin{equation}\label{420}
\mathbbm{E}\left[\log\left(\epsilon\sum\limits_{i=1}^{\frac{1}{\epsilon}}\exp(\beta\omega_{i,0})\right)\right]>F(\beta).
\end{equation}
Then by \eqref{414}, for $\mathbbm{P}$-a.s.,
\begin{equation}\label{421}
\varlimsup\limits_{N\to\infty}\frac{1}{N}\sum\limits_{j=1}^{N}\mathbbm{1}_{(A_{j,\beta}^{\epsilon})^{c}}<1\Leftrightarrow
\varliminf\limits_{N\to\infty}\frac{1}{N}\sum\limits_{j=1}^{N}\mathbbm{1}_{(A_{j,\beta}^{\epsilon})}>0
\end{equation}
Recall the definition of $\mathcal{A}_{N,\beta}^{\epsilon,\omega}$ and $A_{N,\beta}^{\epsilon}$ in \eqref{124} and \eqref{405}, and then \eqref{421} implies \eqref{125}.
\end{proof}
\section{Proof of Theorem \ref{T118}}
The basic idea of the proof is to compare the entropy cost and the energy gain when a heavy-tailed random walk introduced by \eqref{126} and \eqref{127} stays in a distance of $\mathcal{O}\left(\frac{N}{(\log N)^{2}}\right)$ away from the origin. It can be seen that
\begin{equation}\label{501}
Z_{N,\beta}^{\omega}=\sum\limits_{S}\exp(-\beta H_{N}^{\omega}(S))\mathbf{P}(S),
\end{equation}
where $H_{N}^{\omega}(S)$ is the energy introduced by \eqref{107}. For technical feasibility, we may study the second half of the trajectory of the random walk, i.e., $(S_{N/2},\ldots,S_{N})$. On one hand, if the second half of $S_{N}$ stays in a distance $N/(\log N)^{2}$, which is $\gg N^{\frac{1}{\alpha}}$ for $\alpha\in(1,2]$ and makes $\mathbf{P}(S)$ very small, then there is a significant entropy cost. On the other hand, with some random variable $Y$, we may write $\exp(-\beta H_{N}^{\omega}(S))\approx\exp(-\sqrt{N}Y)$, which fluctuates dramatically. Therefore, it is possible that we can find some block with very high energy on $\mathbbm{Z}$ in a distance of $\mathcal{O}\left(\frac{N}{(\log N)^{2}}\right)$ away from the origin, and if the energy gain wins the entropy cost, then the random walk is likely to stay in that block instead of somewhere near the origin.

Our proof consists of two parts. We will first investigate the energy gain. However, we will not estimate the energy directly. Instead, we will compare the contribution to the partition function from the environment on different blocks. In order to do that, we will use a change of measure argument developed in \cite{lacoin2011influence}, since it is more likely to extend to the model with some general environment and it is much shorter than the method used in \cite{bezerra2008superdiffusivity}. Then we need to compute the entropy cost, which will be done by an estimate on a Radon-Nikodym derivative, although it is not as accurate as the Girsanov Theorem used in \cite{bezerra2008superdiffusivity, lacoin2011influence}.
\begin{proof}[Proof of Theorem \ref{T118}]
Without loss of generality, we can assume that the integer $N$ is always even throughout the proof, such that we can omit many $"\lfloor\cdot\rfloor"$ symbols to make the proof more readable.

For any given $\epsilon>0$, to be consistent with \eqref{128}, we denote
\begin{equation}\label{502}
J_{N}=\left(-\frac{\beta^{2}N}{4(\alpha+1+\epsilon)^{2}(\log N)^{2}},\frac{\beta^{2}N}{4(\alpha+1+\epsilon)^{2}(\log N)^{2}}\right)\cap\mathbbm{Z}.
\end{equation}
Then we can define a change of measure from $\mathbbm{P}$ to a new probability measure $\mathbbm{\hat{P}}$ with Ladon-Nikodym derivative
\begin{equation}\label{503}
\frac{\mbox{d}\mathbbm{\hat{P}}}{\mbox{d}\mathbbm{P}}:=\exp\left(-W-\frac{1}{2}\right),
\end{equation}
where
\begin{equation}\label{504}
W=\frac{\sum\limits_{n=\frac{N}{2}+1}^{N}\sum\limits_{x\in J_{N}}\omega_{n,x}}{\sqrt{\frac{N}{2}|J_{N}|}}.
\end{equation}
It is not hard to check that $\hat{\omega}:=(\hat{\omega}_{i,x})_{(i,x)\in\mathbbm{N}\times\mathbbm{Z}}$ defined by
\begin{equation}\label{505}
\hat{\omega}_{i,x}=\omega_{i,x}+\mathbbm{1}_{\left\{(i,x)\in\left[\frac{N}{2}+1,N\right]\times J_{N}\right\}}\left(\frac{N}{2}|J_{N}|\right)^{-\frac{1}{2}}
\end{equation}
is a family of i.i.d. standard Gaussian random variables under $\mathbbm{\hat{P}}$. Probability measure $\mathbbm{\hat{P}}$ has some important property. Firstly, it makes the random environment on $\left[\frac{N}{2},N\right]\times J_{N}$ become less attractive to the random walk. Secondly, it does not differ from $\mathbbm{P}$ too much, which can be seen by the following application of the H$\ddot{\rm{o}}$lder inequality:
\begin{equation}\label{506}
\mathbbm{P}(A)=\mathbbm{\hat{E}}\left[\frac{\rm{d}\mathbbm{P}}{\rm{d}\mathbbm{\hat{P}}}\mathbbm{1}_{A}\right]\leq\sqrt{\mathbbm{E}
\left[\frac{\rm{d}\mathbbm{P}}{\rm{d}\mathbbm{\hat{P}}}\right]}\sqrt{\mathbbm{\hat{P}}(A)}\leq\sqrt{e\mathbbm{\hat{P}}(A)}.
\end{equation}
Then for any $p\in(0,1]$, we have
\begin{equation}\label{507}
\begin{split}
&\mathbbm{P}\left(\mathbf{P}_{N,\beta}^{\omega}\left(\max\limits_{1\leq n\leq N}|S_{n}|<\frac{1}{2}|J_{N}|\right)\geq p\right)\leq
\sqrt{e\mathbbm{\hat{P}}\left(\mathbf{P}_{N,\beta}^{\omega}\left(\max\limits_{1\leq n\leq N}|S_{n}|<\frac{1}{2}|J_{N}|\right)\geq p\right)}\\
=&\sqrt{e\mathbbm{\hat{P}}\left(\frac{\mathbf{E}\left[\exp\left(\beta\sum\limits_{n=1}^{N}\omega_{n,S_{n}}\right)\mathbbm{1}_{\{|S_{n}|
<\frac{1}{2}|J_{N}|,~\forall n\in[1,N]\}}\right]}{\mathbf{E}\left[\exp\left(\beta\sum\limits_{n=1}^{N}\omega_{n,S_{n}}\right)\right]}\geq p\right)}.
\end{split}
\end{equation}
In order to deal with the last term in \eqref{507}, we partition all integer $\mathbbm{Z}$ by
\begin{equation}\label{508}
I_{N}^{k}=\left[(2k-1)L, (2k+1)L\right)\cap\mathbbm{Z},~\forall k\in\mathbbm{Z},
\end{equation}
where
\begin{equation}\label{509}
L=\left\lfloor\frac{\beta^{2}N}{4(\alpha+1+\epsilon_{0})^{2}(\log N)^{2}}\right\rfloor
\end{equation}
with some $\epsilon_{0}\in(0,\epsilon)$ so that when $N$ is large enough, $J_{N}\subset I_{N}^{0}$. Note that under this partition, only those $\omega_{i,x}$'s with $(i,x)\in[\frac{N}{2}+1,N]\times I_{N}^{0}$ is influenced by changing measure from $\mathbbm{P}$ to $\mathbbm{\hat{P}}$.
We define
\begin{equation}\label{510}
Z_{N,\beta}^{\omega}(k):=\mathbf{E}\left[\exp\left(\beta\sum\limits_{n=1}^{N}\omega_{n,S_{n}}\right)\mathbbm{1}_{\left\{S_{n}\in I_{N}^{k},~\forall n\in\left[\frac{N}{2}+1,N\right]\right\}}\right]
\end{equation}
and
\begin{equation}\label{511}
\hat{Z}_{N,\beta}^{\omega}:=\mathbf{E}\left[\exp\left(\beta\sum\limits_{n=1}^{N}\omega_{n,S_{n}}\right)\mathbbm{1}_{\{|S_{n}|
<\frac{1}{2}|J_{N}|,~\forall n\in[1,N]\}}\right].
\end{equation}
Since $I_{N}^{k}$ and $I_{N}^{j}$ are disjoint for $k\neq j$, we have for any positive integer $M$,
\begin{equation}\label{512}
Z_{N,\beta}^{\omega}\geq\sum\limits_{k\in\{-M,\ldots,M\}\setminus\{0\}}Z_{N,\beta}^{\omega}(k).
\end{equation}
Then we can bound the last term in \eqref{507} by
\begin{equation}\label{513}
\begin{split}
&\mathbbm{\hat{P}}\left(\frac{\mathbf{E}\left[\exp\left(\beta\sum\limits_{n=1}^{N}\omega_{n,S_{n}}\right)\mathbbm{1}_{\{|S_{n}|
<\frac{1}{2}|J_{N}|,~\forall n\in[1,N]\}}\right]}{\mathbf{E}\left[\exp\left(\beta\sum\limits_{n=1}^{N}\omega_{n,S_{n}}\right)\right]}\geq p\right)\\
\leq&\mathbbm{\hat{P}}\left(\frac{\hat{Z}_{N,\beta}^{\omega}}{\sum\limits_{k\in\{-M,\ldots,M\}\setminus\{0\}}Z_{N,\beta}^{\omega}(k)}\geq p\right)\\
=&\mathbbm{\hat{P}}\left(\exp\left(-\beta\frac{N}{2}\left(\frac{N}{2}|J_{N}|\right)^{-\frac{1}{2}}\right)\frac{\hat{Z}_{N,\beta}^{\hat{\omega}}}
{\sum\limits_{k\in\{-M,\ldots,M\}\setminus\{0\}}Z_{N,\beta}^{\omega}(k)}\geq p\right)\\
=&\mathbbm{P}\left(\exp\left(-\beta\frac{N}{2}\left(\frac{N}{2}|J_{N}|\right)^{-\frac{1}{2}}\right)\frac{\hat{Z}_{N,\beta}^{\omega}}
{\sum\limits_{k\in\{-M,\ldots,M\}\setminus\{0\}}Z_{N,\beta}^{\omega}(k)}\geq p\right),
\end{split}
\end{equation}
where in the first equality, we change $\omega$ to $\hat{\omega}$ and the last equality results from the property $\mathcal{L}_{\mathbbm{P}}(\omega)=\mathcal{L}_{\mathbbm{\hat{P}}}(\hat{\omega})$. The proof will be completed by the following proposition, whose proof will be given later.
\begin{proposition}\label{T501}
For any $\epsilon>0$, there exists some constant $C>0$, such that for any positive integer $M$ and large enough even integer $N$, we have
\begin{equation}\label{514}
\sum\limits_{k\in\{-M,\ldots,M\}\setminus\{0\}}Z_{N,\beta}^{\omega}(k)\geq C(MN)^{-(\alpha+1+\frac{\epsilon}{2})}Z_{N,\beta}^{\omega}(0)
\end{equation}
with $\mathbbm{P}$-probability greater than $1-\frac{1}{2M}$.
\end{proposition}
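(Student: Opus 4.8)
The plan is to exploit the single-big-jump geometry forced by the indicator in \eqref{510}. For the second half of the trajectory to lie in $I_N^k$, the walk must already sit in $I_N^k$ at time $\tfrac{N}{2}+1$, so it enters the far block through the single increment $S_{N/2}\to S_{N/2+1}$ and afterwards performs an excursion confined to $I_N^k$. Since the increments are translation invariant, the shift by $2kL$ is a bijection from block-$0$ excursions onto block-$k$ excursions that alters only the entering increment and transports the environment on $I_N^0$ onto its (independent) copy on $I_N^k$. Writing $q(\cdot)=\mathbf{P}(X_1=\cdot)$, $W(x)=\mathbf{E}[\exp(\beta\sum_{n\le N/2}\omega_{n,S_n})\mathbbm{1}_{\{S_{N/2}=x\}}]$ for the first-half weight, and $\pi(P)=\prod_{n=N/2+2}^{N}q(S_n^{P}-S_{n-1}^{P})$ for the post-jump weight of a block-$0$ excursion $P$, conditioning at time $N/2$ gives the exact identity
\begin{equation*}
Z_{N,\beta}^{\omega}(k)=\sum_{x\in\mathbbm{Z}}\sum_{P}W(x)\,q\big(S_{N/2+1}^{P}+2kL-x\big)\,\pi(P)\,\exp\Big(\beta\sum_{n=N/2+1}^{N}\omega_{n,S_{n}^{P}+2kL}\Big).
\end{equation*}

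Next I would introduce the auxiliary quantities $V_k$ obtained from this formula by replacing the entering factor $q(S_{N/2+1}^{P}+2kL-x)$ with the unshifted $q(S_{N/2+1}^{P}-x)$, while keeping the block-$k$ environment. Then $V_0=Z_{N,\beta}^{\omega}(0)$, and, crucially, conditionally on the first-half environment the family $\{V_k\}_{|k|\le M}$ is i.i.d., since for distinct $k$ the weights $\omega_{n,\cdot+2kL}$ live on the disjoint blocks $I_N^k$ over the second-half times. To pass from $V_k$ back to $Z_{N,\beta}^{\omega}(k)$ I would establish the pointwise kernel comparison
\begin{equation*}
q(j+2kL)\ge c_k\,q(j)\qquad\text{for all }j\in\mathbbm{Z},\ \ 1\le|k|\le M,
\end{equation*}
with $c_k$ of order $q(2kL)\asymp L(2kL)/(2kL)^{\alpha+1}$; this follows from the symmetry of $q$ and regular variation (Potter bounds), the minimal ratio in $j$ being attained near $j=0$. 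Summing term by term yields $Z_{N,\beta}^{\omega}(k)\ge c_k V_k\ge c_M V_k$ for all $|k|\le M$, the entering cost being smallest at $|k|=M$.

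The probabilistic heart is then an exchangeability argument. Conditionally on the first half, $V_{-M},\dots,V_M$ are $2M+1$ i.i.d. (hence exchangeable) random variables with a continuous law, so $V_0$ is their strict maximum with probability exactly $1/(2M+1)$; consequently
\begin{equation*}
\mathbbm{P}\big(\exists\,k\neq0:\ V_k>V_0\big)\ge 1-\frac{1}{2M+1}>1-\frac{1}{2M}.
\end{equation*}
On this event $\sum_{k\neq0}V_k\ge\max_{k\neq0}V_k>V_0=Z_{N,\beta}^{\omega}(0)$, and combining with $Z_{N,\beta}^{\omega}(k)\ge c_M V_k$ gives $\sum_{k\neq0}Z_{N,\beta}^{\omega}(k)\ge c_M\,Z_{N,\beta}^{\omega}(0)$ with probability greater than $1-\tfrac{1}{2M}$. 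It then remains to check that $c_M\ge C(MN)^{-(\alpha+1+\epsilon/2)}$: by \eqref{509} one has $2ML\asymp MN/(\log N)^2$, so $c_M\asymp (MN)^{-(\alpha+1)}(\log N)^{2(\alpha+1)}L\big(MN/(\log N)^2\big)$, and the polylogarithmic and slowly varying factors are absorbed into the spare power $N^{\epsilon/2}$ for large $N$, the choice $\epsilon_0<\epsilon$ in \eqref{509} providing exactly this room.

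I expect the main obstacle to be the two uniformity statements behind the kernel step: the comparison $q(j+2kL)\ge c_k q(j)$ must hold uniformly over all $j\in\mathbbm{Z}$ and all $1\le|k|\le M$, and the subsequent absorption of $(\log N)^{2(\alpha+1)}L(MN/(\log N)^2)$ into $(MN)^{\epsilon/2}$ must be uniform in $M\ge1$. Both are delicate precisely because the heavy tail forces the slowly varying function to be tracked with care, which is the difficulty flagged in Remark \ref{T120}. By contrast, once the i.i.d. structure of $\{V_k\}$ has been isolated, the probability $1-1/(2M)$ drops out of pure exchangeability.
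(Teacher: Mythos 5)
Your proposal is correct and takes essentially the same route as the paper: your $V_k$ are exactly the paper's shifted partition functions $\overline{Z}_{N,\beta}^{\omega}(k)$ from \eqref{520}, your conditional i.i.d./exchangeability step is \eqref{521}, and your pointwise kernel comparison $q(j+2kL)\geq c_k\,q(j)$ is precisely the content of Lemma \ref{T502}, which lower-bounds the Radon--Nikodym derivative of the single shifted increment $X_{N/2+1}$ uniformly via Potter's bound. One cosmetic remark: the spare power that absorbs the $(\log N)^{2(\alpha+1)}$ and slowly varying factors comes from the Potter exponent $\delta=\epsilon/2$ in the tail-ratio estimate (uniform in $M$ because the slack sits in $(MN)^{-\delta}$), not from the choice $\epsilon_{0}<\epsilon$ in \eqref{509}, whose role is only to ensure $J_{N}\subset I_{N}^{0}$.
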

By $J_{N}\subset I_{N}^{0}$ and Proposition \ref{T501},
\begin{equation}\label{515}
\frac{\hat{Z}_{N,\beta}^{\omega}}
{\sum\limits_{k\in\{-M,\ldots,M\}\setminus\{0\}}Z_{N,\beta}^{\omega}(k)}\leq\frac{Z_{N,\beta}^{\omega}(0)}
{\sum\limits_{k\in\{-M,\ldots,M\}\setminus\{0\}}Z_{N,\beta}^{\omega}(k)}\leq C(MN)^{\alpha+1+\frac{\epsilon}{2}}
\end{equation}
with $\mathbbm{P}$-probability greater than $1-\frac{1}{2M}$. Note that by our choice of $J_{N}$,
\begin{equation}\label{516}
\exp\left(-\beta\frac{N}{2}\left(\frac{N}{2}|J_{N}|\right)^{-\frac{1}{2}}\right)\sim N^{-(\alpha+1+\epsilon)}.
\end{equation}
Combine \eqref{507}, \eqref{513}, \eqref{516}, \eqref{517} and choosing $p=N^{-\frac{\epsilon}{4}}$ in \eqref{507}, and then we have
\begin{equation}\label{517}
\mathbbm{P}\left(\mathbf{P}_{N,\beta}^{\omega}\left(\max\limits_{1\leq n\leq N}|S_{n}|<\frac{1}{2}|J_{N}|\right)\geq N^{-\frac{\epsilon}{4}}\right)\leq\sqrt{\frac{e}{2M}}
\end{equation}
when $N$ is large enough. Thus
\begin{equation}\label{518}
\mathbbm{E}\left[\mathbf{P}_{N,\beta}^{\omega}\left(\max\limits_{1\leq n\leq N}|S_{n}|<\frac{1}{2}|J_{N}|\right)\right]\leq\sqrt{\frac{e}{2M}}+N^{-\frac{\epsilon}{4}}.
\end{equation}
By sending $N$ to infinity and then sending $M$ to infinity, we finish the proof of Theorem \ref{T118}.
\end{proof}
Now we prove Proposition \ref{T501}, which gives an estimate on the entropy cost for a random walk staying in the blocks which are far away from the origin.
\begin{proof}[Proof of Proposition \ref{T501}]
For all $k\in\{-M,\ldots,M\}$, by recalling $L$ in \eqref{509} and $I_{N}^{k}$ in \eqref{508}, we define
\begin{equation}\label{519}
h_{N}(n,k)=\begin{cases}
0,~&\mbox{for}~1\leq n\leq\frac{N}{2},\\
2kL,~&\mbox{for}~\frac{N}{2}+1\leq n\leq N,
\end{cases}
\end{equation}
and
\begin{equation}\label{520}
\overline{Z}_{N,\beta}^{\omega}(k):=\mathbf{E}\left[\exp\left(\beta\sum\limits_{n=1}^{N}\omega_{n,S_{n}+h_{N}(n,k)}\right)\mathbbm{1}_{\left
\{S_{n}\in I_{N}^{0},~\forall n\in\left[\frac{N}{2}+1,N\right]\right\}}\right].
\end{equation}
When $S_{n}\in I_{N}^{0}$ for all $n\in\left[\frac{N}{2}+1,N\right]$, $\left\{(\omega_{n,S_{n}+h_{N}(n,k)})_{n\in\left[\frac{N}{2}+1,N\right]}\right\}_{k\in\{-M,\ldots,M\}}$ are independent families for different $k$. Hence, it is easy to show that $\mathbbm{P}(\overline{Z}_{N,\beta}^{\omega}(k)=\overline{Z}_{N,\beta}^{\omega}(j))=0$ for $k\neq j$ and $(\overline{Z}_{N,\beta}^{\omega}(k))_{k\in\{-M,\ldots,M\}}$ is an exchangeable sequence. Therefore,
\begin{equation}\label{521}
\mathbbm{P}\left(\overline{Z}_{N,\beta}^{\omega}(0)=\max\limits_{k\in\{-M,\ldots,M\}}\overline{Z}_{N,\beta}^{\omega}(k)\right)=\frac{1}{2M+1}
\end{equation}
Note that $\overline{Z}_{N,\beta}^{\omega}(0)=Z_{N,\beta}^{\omega}(0)$ and we need to compare $\overline{Z}_{N,\beta}^{\omega}(k)$ with $Z_{N,\beta}^{\omega}(k)$ for $k\neq0$. By writing $\overline{S}_{n}=S_{n}-h_{N}(n,k)$, we have
\begin{equation}\label{522}
Z_{N,\beta}^{\omega}=\mathbf{E}\left[\exp\left(\beta\sum\limits_{n=1}^{N}\omega_{n,\overline{S}_{n}+h_{N}(n,k)}\right)\mathbbm{1}_{\left
\{\overline{S}_{n}\in I_{N}^{0},~\forall n\in\left[\frac{N}{2}+1,N\right]\right\}}\right].
\end{equation}
We can complete the proof with the help of the following lemma.
\begin{lemma}\label{T502}
Define a sequence of random variables $(\overline{X}_{n})_{1\leq n\leq N}$ by
\begin{equation}\label{523}
\overline{X}:=\begin{cases}
X_{n},&~\rm{for}~n\neq\frac{N}{2}+1,\\
X_{n}-h_{N}(n,k),&~\rm{for}~n=\frac{N}{2}+1.
\end{cases}
\end{equation}
We change the measure from $\mathbf{P}$ to a new probability measure $\mathbf{\overline{P}}$ by Ladon-Nikodym Theorem such that $\mathcal{L}_{\mathbf{\overline{P}}}((\overline{X})_{1\leq n\leq N})=\mathcal{L}_{\mathbf{P}}((X)_{1\leq n\leq N})$. Then for any $\delta>0$, we can find a constant $C>0$, such that for any $k\in\{-M,\ldots,M\}$ and large enough integer $N$, we have
\begin{equation}\label{524}
\frac{\rm{d}\mathbf{P}}{\rm{d}\mathbf{\overline{P}}}\geq C(|k|N)^{-(\alpha+1+\delta)}.
\end{equation}
\end{lemma}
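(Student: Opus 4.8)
The plan is to reduce \eqref{524} to a one‑dimensional estimate on the single increment that is displaced, and then to control that estimate through the regular variation of the step law \eqref{126}. Write $q(\cdot):=\mathbf{P}(X_{1}=\cdot)$; since $q(y)>0$ for every $y\in\mathbbm{Z}$, the laws $\mathbf{P}$ and $\mathbf{\overline{P}}$ are mutually absolutely continuous. Because $\overline{X}_{n}=X_{n}$ for all $n\neq\frac{N}{2}+1$ while $\overline{X}_{\frac{N}{2}+1}=X_{\frac{N}{2}+1}-h_{N}(\tfrac{N}{2}+1,k)=X_{\frac{N}{2}+1}-2kL$ by \eqref{523} and \eqref{519}, every factor in the product Radon--Nikodym density cancels except the one at time $\frac{N}{2}+1$, so that
\[
\frac{\mathrm{d}\mathbf{P}}{\mathrm{d}\mathbf{\overline{P}}}=\frac{q(X_{\frac{N}{2}+1})}{q(X_{\frac{N}{2}+1}-2kL)}.
\]
In the application this density is bounded below on the support of the relevant functional, which leaves the increment $X_{\frac{N}{2}+1}$ entirely free; hence I would prove the deterministic bound $\inf_{x\in\mathbbm{Z}}q(x)/q(x-2kL)\ge C(|k|N)^{-(\alpha+1+\delta)}$ (the case $k=0$ being trivial, the density being then identically $1$, so only $k\neq0$ matters).

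First I would show $\inf_{x}q(x)/q(x-2kL)\ge c_{0}\,q(2|k|L)$ for a constant $c_{0}>0$ depending only on $\alpha$ and $L(\cdot)$, by splitting on $\{|x|\le 4|k|L\}$ and $\{|x|>4|k|L\}$. On the first set $q(x)\ge q(4|k|L)$ by monotonicity of the tail while $q(x-2kL)\le \sup_{y}q(y)<\infty$, and $q(4|k|L)\asymp q(2|k|L)$ because $L$ is slowly varying (the Uniform Convergence Theorem, \cite[Theorem 1.2.1]{bingham1989regular}, gives $L(4|k|L)/L(2|k|L)\to1$, uniformly since $2|k|L\ge 2L\to\infty$). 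On the second set $|x-2kL|\in(|x|/2,\,3|x|/2)$ is comparable to $|x|$, so the power factor $(|x-2kL|/|x|)^{\alpha+1}$ is bounded below and, again by uniform convergence, $L(|x|)/L(|x-2kL|)$ stays bounded away from $0$; thus the quotient is bounded below by a positive constant, which dominates $c_{0}\,q(2|k|L)$ since $q(2|k|L)\le\sup_{y}q(y)$. Taking the smaller of the two constants gives the claim.

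Finally I would estimate $q(2|k|L)$ using \eqref{126} and the choice of $L$ in \eqref{509}. Since $L\le C_{\beta}N$ we have $2|k|L\le C|k|N$, hence $(2|k|L)^{\alpha+1}\le C(|k|N)^{\alpha+1}$, while $2|k|L\ge2L\to\infty$ uniformly in $k\neq0$, so Potter's bound yields $L(2|k|L)\ge(2|k|L)^{-\delta}\ge c(|k|N)^{-\delta}$ for $N$ large. Combining,
\[
\frac{\mathrm{d}\mathbf{P}}{\mathrm{d}\mathbf{\overline{P}}}\ge c_{0}\,q(2|k|L)=c_{0}\,\frac{L(2|k|L)}{(2|k|L)^{\alpha+1}}\ge C(|k|N)^{-(\alpha+1+\delta)},
\]
which is \eqref{524}. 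The main obstacle is precisely the uniform‑in‑$x$ control of $q(x)/q(x-2kL)$: one must rule out that the density is much smaller than $q(2|k|L)$ for atypical values of the displaced increment, and the heavy, merely regularly varying tail means this cannot be read off from monotonicity alone but needs the uniform convergence and Potter estimates above. Absorbing the slowly varying factor $L(2|k|L)$ together with the poly‑logarithmic factors coming from $L\asymp N/(\log N)^{2}$ into the arbitrarily small exponent $\delta$ is exactly what forces the statement to carry the loss $\delta>0$.
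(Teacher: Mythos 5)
Your proposal follows essentially the same route as the paper: both reduce the Radon--Nikodym derivative to the single displaced increment, so that $\frac{\mathrm{d}\mathbf{P}}{\mathrm{d}\mathbf{\overline{P}}}=p_{X_{N/2+1}}/p_{X_{N/2+1}-h}$ with $h=2kL$ (the paper's \eqref{527}), and both bound this ratio uniformly over the value $x$ of that increment via Potter-type estimates for the slowly varying factor together with $h\leq C|k|N$; your split at $|x|\leq4|k|L$ versus $|x|>4|k|L$ plays the same role as the paper's partition of the summation range below \eqref{528}, handled there by \eqref{529}.

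One step is misstated, though it is harmless: on the set $\{|x|\leq4|k|L\}$ you assert $q(x)\geq q(4|k|L)$ ``by monotonicity of the tail,'' but \eqref{126} only gives $q(x)=L(|x|)|x|^{-(\alpha+1)}$ with $L$ merely slowly varying, so $q$ need not be monotone in $|x|$. The repair is the Potter bound you already invoke elsewhere: $L(|x|)\geq c|x|^{-\delta}$ for $|x|$ above a fixed threshold (and a positive finite minimum over the finitely many integers below it), whence $q(x)\geq c(4|k|L)^{-(\alpha+1+\delta)}\geq c'(|k|N)^{-(\alpha+1+\delta)}$ uniformly on this set, which is all that is needed since $q(x-2kL)\leq\sup_{y}q(y)<\infty$. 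Correspondingly, your intermediate claim $\inf_{x}q(x)/q(x-2kL)\geq c_{0}\,q(2|k|L)$ with $c_{0}$ independent of $k$ and $N$ should be weakened by a factor $(|k|N)^{-\delta}$; this loss is absorbed by the arbitrary $\delta>0$ in \eqref{524}, exactly as the paper absorbs the $(\log N)^{2}$ factor from \eqref{509}. With this one-line patch your argument is complete and matches the paper's proof in substance.
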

\begin{proof}[Proof of Lemma \ref{T502}]
It is obvious that $\mathbf{\overline{P}}$ and $\mathbf{P}$ only differ on the distribution of $X_{\frac{N}{2}+1}$. We use the notations
\begin{equation}\label{525}
\mathbf{P}\left(X_{\frac{N}{2}+1}=x\right)=p_{x},~\forall x\in\mathbbm{Z}
\end{equation}
and $h=h_{N}(n,k)$ for short. Then
\begin{equation}\label{526}
\mathbf{\overline{P}}\left(X_{\frac{N}{2}+1}=x\right)=\mathbf{\overline{P}}\left(\overline{X}_{\frac{N}{2}+1}=x-h\right)=p_{x-h}.
\end{equation}
The Radon-Nikodym derivative can be written explicitly by
\begin{equation}\label{527}
\frac{\rm{d}\mathbf{P}}{\rm{d}\mathbf{\overline{P}}}=\sum\limits_{x\in\mathbbm{Z}\setminus\{0,h\}}\mathbbm{1}_{\left\{X_{\frac{N}{2}+1}=x
\right\}}\frac{p_{x}}{p_{x-h}}+\mathbbm{1}_{\left\{X_{\frac{N}{2}+1}=0\right\}}\frac{p_{0}}{p_{-h}}+\mathbbm{1}_{\left\{X_{\frac{N}{2}+1}=h
\right\}}\frac{p_{h}}{p_{0}}.
\end{equation}
The summand in the summation on the right hand side of \eqref{527} is
\begin{equation}\label{528}
\frac{L(|x|)}{L(|x-h|)}\left|1-\frac{h}{x}\right|^{\alpha+1}.
\end{equation}
By Potter's bound (see \cite[Theorem 1.5.6]{bingham1989regular}), given a slowly varying function $L(x)$, for any $\delta>0$, $A\geq1$, there exists some constant $C=C(\delta,A)$, such that
\begin{equation}\label{529}
\frac{L(x)}{L(y)}\geq C\min\limits_{x\geq A,y\geq A}\left\{\left(\frac{x}{y}\right)^{\delta},\left(\frac{x}{y}\right)^{-\delta}\right\}.
\end{equation}
We can partition the summation range by $(-\infty,1],[1,h-1],[h+1,\infty)$ to get rid of the absolute value in \eqref{528} and then apply \eqref{529} to achieve \eqref{524}. Note that the term $(\log N)^{2}$ in $L$ can be ignored by some adjustment in the power $\delta$, since it is a slowly varying function.
\end{proof}
Now by \eqref{522} and Lemma \ref{T502}, for $\delta=\frac{\epsilon}{2}$ and some constant $C$, we have
\begin{equation}\label{530}
\begin{split}
Z_{N,\beta}^{\omega}(k)&=\overline{E}\left[\frac{\rm{d}\mathbf{P}}{\rm{d}\mathbf{\overline{P}}}\exp\left(\beta\sum\limits_{n=1}^{N}
\omega_{n,\overline{S}_{n}+h_{N}(n,k)}\right)\mathbbm{1}_{\left\{\overline{S}_{n}\in I_{N}^{0},~\forall n\in\left[\frac{N}{2}+1,N\right]\right\}}\right]\\
&\geq C(MN)^{-\alpha+1+\frac{\epsilon}{2}}\overline{Z}_{N,\beta}^{\omega}(k),
\end{split}
\end{equation}
where in the last inequality, we use the property that $\mathcal{L}_{\mathbf{\overline{P}}}((\overline{X})_{1\leq n\leq N})=\mathcal{L}_{\mathbf{P}}((X)_{1\leq n\leq N})$. Combine \eqref{521} and \eqref{530} and then we finish the proof of Proposition \ref{T501}.
\end{proof}

\begin{remark}\label{Ts3}
In \cite{lacoin2011influence}, the author also showed that for a Brownian polymer $B_{t}$ in a continuous Gaussian field, $B_{t}$ cannot fluctuate on a scale larger than $\mathcal{O}(N^{\frac{3}{4}})$. However, in Theorem \ref{T116}, we have shown that if the one step distribution of the random walk has polynomial decay, then even though it is in the domain of attraction of the Gaussian law, it will fluctuate on a scale larger than $\mathcal{O}(N^{1-\epsilon})$ for arbitrarily small $\epsilon>0$, which is much larger than $N^{\frac{3}{4}}$. This is a remarkable difference between the long-range model and the short-range model, which is comparable to the nearest-neighbor model.
\end{remark}

\textbf{Acknowledgements:}~I would like to acknowledge support from AcRF Tier 1 grant R-146-000-220-112. I am deeply indebted to my supervisor Prof.\ Rongfeng Sun, who introduced this topic to me, provided suggestions and discussed with me and I am grateful to my classmate Jinjiong Yu for helpful discussion. I also want to thank two referees, whose comments help me add the super-$\alpha$-stable result and correct some mistakes, which improve an early version of this paper.
\bibliographystyle{plain}
\bibliography{references}
\end{document}